\documentclass[10pt]{article}

\usepackage[utf8]{inputenc}
\usepackage[T1]{fontenc} 
\usepackage[english]{babel}

\usepackage{amsmath,amsthm,amsfonts,amssymb,amscd}
\usepackage{enumerate}
\usepackage{latexsym}
\usepackage{mathrsfs}
\usepackage{bbm}
\usepackage{xcolor}
\usepackage{tcolorbox}
\usepackage{arydshln} 
\usepackage{stmaryrd}
\usepackage[makeroom]{cancel}
\usepackage{lineno} 
\usepackage{tcolorbox}
\usepackage{hyperref}
\usepackage{authblk}
\newcommand{\email}[1]{\href{mailto:#1}{#1}}

\usepackage{geometry}
\usepackage{hyperref}
\hypersetup{
    colorlinks = true,
    linkcolor={blue},
    citecolor={blue},
    urlcolor={blue},
}

\usepackage[capitalize,nameinlink]{cleveref}

\makeatletter
\def\thm@space@setup{%
  \thm@preskip=1ex plus .25ex minus .1ex
  \thm@postskip=1ex plus .25ex minus .1ex
}
\makeatother

\theoremstyle{plain}
\newtheorem{theorem}{Theorem}[section]
\newtheorem{lemma}[theorem]{Lemma}
\newtheorem{corollary}[theorem]{Corollary}
\newtheorem{proposition}[theorem]{Proposition}
\newtheorem{definition}[theorem]{Definition}

\theoremstyle{remark}
\newtheorem{remark}[theorem]{Remark}

\newtheorem{assu}{Assumption}

\usepackage{titlesec}

\titleformat{\section}
  {\normalfont\Large\bfseries}
  {\thesection}
  {0.5em}
  {}

\newenvironment{keywords}
  {\par\smallskip\noindent\textbf{Keywords: }}
  {\par}
\newenvironment{MSCcodes}
  {\par\smallskip\noindent\textbf{MSC codes: }}
  {\par}

\newcommand{\norm}[1]{\left\| #1 \right\|}
\newcommand{\R}{\mathbb{R}}
\newcommand{\N}{\mathbb{N}}

\renewcommand{\P}{\mathscr{P}}

\newcommand{\supp}{\operatorname{supp}}
\newcommand{\AC}{\operatorname{AC}}
\newcommand{\id}{\operatorname{id}}

\newcommand{\Lip}{\operatorname{Lip}}
\newcommand{\PT}{\operatorname{PT}}
\newcommand{\PP}{\mathsf{P}}
\renewcommand{\Bar}{\operatorname{Bar}}

\newcommand{\Ptwo}{\mathscr P_2(\mathbb R^d)}

\newenvironment{blue}{\begingroup\color{blue}}{\endgroup}

\newcommand{\mcJ}{\mathcal{J}}
\newcommand{\mcG}{\mathcal{G}}

\newcommand{\mcF}{\mathcal{F}}
\newcommand{\mcM}{\mathcal{M}}
\newcommand{\mcB}{\mathcal{B}}

\newcommand{\mcA}{\mathcal{A}}

\newcommand{\mcO}{\mathcal{O}}

\newcommand{\mcW}{\mathcal{W}}
\newcommand{\mcU}{\mathcal{U}}

\newcommand{\sP}{\operatorname{\mathscr{P}}}
\newcommand{\sL}{\mathscr{L}}

\newcommand{\vphi}{\varphi}
\renewcommand{\epsilon}{\varepsilon}

\newcommand{\mbP}{\mathbb{P}}

\providecommand{\norm}[1]{\lVert#1 \rVert} 
\providecommand{\inner}[1]{\langle#1 \rangle} 
\newcommand{\Tanreg}{\operatorname{Tan}^\text{reg}}
\newcommand{\bfTan}{\mathbf{Tan}}

\definecolor{color0}{rgb}{0,0,0}
\definecolor{color1}{rgb}{0.22,0.45,0.70}
\definecolor{color2}{rgb}{0.45,0.45,0.45}

\title{Pontryagin maximum principle for non-smooth state-constrained control problems over Wasserstein spaces}
\author{
  Fernanda Urrea\thanks{Normandie Univ, INSA Rouen Normandie, Laboratoire LMI, France. Corresponding author: \email{fernanda.urrea@insa-rouen.fr}.} \quad
  Ernesto Treumún\thanks{Unité de Mathématiques Appliquées, ENSTA,  Institut Polytechnique de Paris, Palaiseau, France. \email{ernesto.treumun@ensta.fr}.}
}

\date{}

\begin{document}


\maketitle

\bigskip

\begin{abstract}
    We study a state-constrained Bolza optimal control problem governed by a non-local continuity equation in the Wasserstein space $\sP_2(\R^d)$. We develop a Pontryagin maximum principle for these problems in which both the cost functionals and the constraints are only locally Lipschitz, and the initial measure does not necessarily have compact support. To derive the main result, we introduce a notion of a Clarke-type deterministic subdifferential over $\sP_2(\R^d)$. The proof of the main result combines a careful adaptation of the Lasry-Lions regularization to the Wasserstein space coupled with stability results of the Clarke subdifferential that allow us to recover the first-order information. We also show that, for functionals convex along plans, this subdifferential admits a global supporting characterization and provides a first-order characterization of global minimizers.
\end{abstract}

\begin{keywords}
Clarke subdifferential, Continuity equation, Pontryagin maximum principle, Wasserstein spaces, State constraints
\end{keywords}

\begin{MSCcodes}
49Q20, 49K21, 58E25, 35Q49
\end{MSCcodes}

\section{Introduction}

We study a state-constrained optimal control problem in Bolza form over the Wasserstein space $\sP_2(\R^d)$. More specifically, we study

\begin{equation*}
    \begin{aligned}
        \min_{u \in\mathcal{U}_{ad}} \quad  &\mcJ(\mu^u(T)) + \int_0^T L(t, u(t), \mu^u(t))\,\mathrm{d}t \\
        \text{s.t}\quad&\left\{
          \begin{array}{l}
          \partial_t\mu^u_t+\operatorname{div}\!\big(f[\mu^u_t](t,u_t,\cdot)\,\mu^u_t\big)=0\\
          \mu^u_{|t=0}=\mu_0
          \end{array}
          \right., \\
        &g(\mu^u(t)) \leq 0 \quad \forall t \in [0,T]
    \end{aligned}
\end{equation*}
where $u \equiv u_t: [0,T] \to \R^r$ is a control in open-loop form and $f$ may depend on the whole distribution $\mu_t$, and $\mu_0 \in \sP_2(\R^d)$ is an arbitrary measure.
Such problems arise naturally in the mean-field description and control of large populations. Typical examples include models of collective dynamics \cite{corbetta2016multiscale, cristiani2014multiscale, piccoli2009pedestrian}, opinion dynamics on networks \cite{bellomo2013dynamics, hegselmann2015opinion}, and animal flocks \cite{ballerini2008interaction, cucker2007emergent}. They also provide a natural framework for controlled systems whose initial condition is only known through a probability distribution \cite{coyaudOptimalTransportAppliedToPhysics}.
\smallskip

Pontryagin-type necessary conditions for optimal control problems in
Wasserstein spaces have received considerable attention in recent years.
For unconstrained problems, a Pontryagin maximum principle (PMP) was
established in \cite{bonnetrossi2019} and subsequently extended in several
directions; see, for instance, \cite{achdou2020mean,almi2025pontryagin}.
Formulations incorporating final-point and pointwise state constraints
have also been obtained
\cite{bonnet2019pontryagin,bonnet2021necessary,urrea2026pontryagin}.
However, these formulations rely on differentiability assumptions on
the cost and constraint functionals, which restrict their applicability
to natural objectives involving transport distances. In particular,
the squared Wasserstein distance $d_\mcW^2(\cdot,\sigma)$ is not globally
differentiable unless $\sigma$ is a Dirac mass
\cite{alfonsi2020squared}. Problems in which this functional measures
proximity to a prescribed distribution are therefore not generally
covered by these differentiable formulations.

This work introduces a first-order non-smooth calculus on $\sP_2(\R^d)$ that makes it possible to treat state-constrained optimal control problems with locally Lipschitz data. Building on this calculus, we establish a Pontryagin maximum principle beyond the differentiable framework covered by the existing literature. 
\smallskip

The contributions of this work are twofold. First, we introduce an
intrinsic Clarke-type subdifferential for locally Lipschitz functionals
on $\sP_2(\R^d)$, in close analogy with the classical Clarke
subdifferential over Banach spaces
\cite{clarke2013functional,clarke1998nonsmooth,clarke1975generalized,
clarke1976new,vinter2010optimal}.
To the best of our knowledge, this is the first use of a Clarke-type
subdifferential in a Wasserstein setting.
The choice of this object is motivated by the possible emptiness of
the Fr\'echet subdifferential of Ambrosio--Gigli--Savar\'e
\cite{ambrosioGradientFlowsMetric2005}, even for locally Lipschitz
functionals. For example, the Fr\'echet subdifferential of
$d_\mcW^2(\cdot,\sigma)$ at $\mu$ is empty unless the optimal transport
plan between $\mu$ and $\sigma$ is unique and induced by a map
\cite[Theorem~3.2]{alfonsi2020squared}.
It therefore cannot provide a generally applicable first-order
framework under the local-Lipschitz assumptions considered here.
By contrast, our Clarke subdifferential is non-empty, convex, and
weakly closed at every $\mu\in\sP_2(\R^d)$.
We establish its structural and stability properties and relate it
to the classical Clarke subdifferential through the Lions lift
\cite{lionslifting2006}.
\smallskip

This calculus also yields geometric characterizations of nonsmooth
functionals. Under convexity along plans, we show that the map-based
Clarke subdifferential satisfies a global supporting inequality against
arbitrary transport plans. We further obtain an explicit formula for
the Clarke subdifferential of the squared Wasserstein distance in terms
of barycentric projections of optimal transport plans, using a sharpened
characterization of its strong Fr\'echet superdifferential.
Together with the stability results, these characterizations provide
a framework for nonsmooth variational analysis and optimization on
Wasserstein space beyond the particular control problem considered here.
Additional proofs and geometric ingredients are collected in
Appendix~\ref{append: Additional properties Clarke subdiff}.
\smallskip

Second, we use this framework to prove a PMP for state-constrained
Bolza problems with locally Lipschitz terminal costs, running costs,
and state constraints, without assuming compact support of the initial
measure. The running cost satisfies the additional structural
assumptions stated in Section~\ref{subsec: setting of the problem}.
The proof relies on a regularization-and-stability argument:
we adapt the Lasry--Lions regularization \cite{lasry1986remark}
to $\sP_2(\R^d)$, regularize the cost and constraint functionals,
and derive necessary conditions for the resulting problems through
diffusive perturbations of the control
\cite{li2012optimal}.
To accommodate initial measures without compact support, we refine
the perturbation technique used in \cite{urrea2026pontryagin}
by discretizing the underlying space
$L^1([0,T];L^2_{\mu_0})$ rather than $\R^d$; see
Proposition~\ref{prop:diffuse-linearization}.
This extends the compactly supported framework of
\cite{bonnet2019pontryagin,bonnet2021necessary,bonnetrossi2019,
urrea2026pontryagin}.
Finally, to recover first-order information in the limit, we introduce
a pullback map based on Wasserstein parallel transport
\cite{bertucci2025tangent} and establish the corresponding stability
properties of the Clarke subdifferential.

The use of Hilbert-space tools in this argument does not make the result
a direct consequence of an abstract maximum principle.
Although the state equation admits an equivalent Lagrangian formulation
in $L^2_{\mu_0}$, differentiability of the velocity field in the spatial
and measure variables need not imply Fr\'echet differentiability of
$X\mapsto f[X_\#\mu_0](t,u,X(\cdot))$ as a map from $L^2_{\mu_0}$
to itself. Abstract results requiring this stronger regularity therefore
cannot be invoked under our assumptions alone; see also
\cite[Remark~4.3]{averboukh2025pontryagin} for a related discussion.
Our argument instead combines Hilbert-space regularization of the
scalar cost and constraint functionals with first-order expansions
along controlled perturbations of the flow. The intrinsic Clarke
calculus then provides the stability needed to identify limiting
subgradients as the underlying measures vary.

\paragraph{Structure of the paper.} In Section \ref{sec: formulation of the problem}, we introduce the optimal control problem and the main assumptions of the manuscript. Section \ref{sec: preliminaries on Wasserstein} recalls standard tools on $\sP_2(\R^d)$. In Section \ref{sec: Clarke subdiff on Wasserstein}, we introduce the Clarke subdifferential on $\sP_2(\R^d)$. Section \ref{sec: Lasry Lions on Wasserstein} is devoted to the study of the Lasry-Lions regularization on $\sP_2(\R^d)$. Section \ref{sec: PMP on Wasserstein} contains the proof of the main theorem and states the diffusive perturbation result. In Appendix \ref{append: Additional properties Clarke subdiff}, we defer the proofs of additional properties of the Clarke subdifferential which we consider to be of independent interest, while Appendix \ref{sec: diffusive perturbation} contains the proof of the diffusive perturbation and Appendix \ref{sec: well-posedness adjoint} provides a well-posedness result for the adjoint equation.



\section{Formulation of the problem}\label{sec: formulation of the problem}

\subsection{Preliminaries and notation}

Let $\mathcal{X}$ be a Polish space, i.e., a complete separable metric space, $\mcM(\mathcal{X})$ stands for the set of Radon measures over $\mathcal{X}$, while $\mcM_+(\mathcal{X})$ stands for the proper subset of $\mcM(\mathcal{X})$ of non-negative Radon measures. We say that a sequence of measures $\{\lambda_n\}_n \subset \mcM(\mathcal{X})$ {\it narrowly} converges or weakly-$\star$ converges to a measure $\lambda$ if $\lambda_n \rightharpoonup \lambda$ in duality with the set of continuous bounded functions $\mathcal{X} \to \R$.\\

$C(\mathcal{X})$ stands for the set of continuous functions $\mathcal{X} \to \R$. If $\mathcal{X}$ is compact, it is well known that the topological dual of $C(\mathcal{X})$ is $\mcM(\mathcal{X})$ and by Banach-Alaoglu theorem, bounded sets of $\mcM(\mathcal{X})$ are compact in the weak-$\star$ topology (i.e. in the topology induced by the narrow convergence).\\

We say that a function $\phi: [0,T] \to \mathcal{X}$ is a function of bounded variation if 
$$\operatorname{Var}(\phi) := \sup_{ \substack{ n \in \N \\ 0= t_1 < t_2 < ... < t_n = T} } \bigg\{ \sum_{k = 1}^{n-1} d(\phi(t_{k+1}), \phi(t_k)) \bigg\} < + \infty.$$

Let $(\mathcal{X},\Sigma_\mathcal{X})$ and $(\mathcal{Y},\Sigma_\mathcal{Y})$ be measurable spaces and let $\phi:\mathcal{X}\to \mathcal{Y}$ be measurable. The push-forward of a probability measure $\mu\in\sP(\mathcal{X})$ through $\phi$ is the probability measure $\phi_\#\mu\in\sP(\mathcal{Y})$ defined by
$$
\phi_\#\mu(A) := \mu(\phi^{-1}(A)), \qquad \forall A\in\Sigma_\mathcal{Y}.
$$

Let $(\mathcal{X},d)$ be a Polish metric space, i.e., a complete and separable metric space. The space $\sP_c(\mathcal{X})$ stands for the set of probability measures with compact support. We denote by $\sP_2(\mathcal{X})$ the space of probability measures with finite $2$-moment, namely

$$
\sP_2(\mathcal{X}) := \left\{ \mu\in\sP(\mathcal{X}) : \int_\mathcal{X} d^2(x,o)\,d\mu(x)<\infty \text{ for some  } o\in \mathcal{X} \right\}.
$$

The Wasserstein distance between two measures $\mu,\nu\in\sP_2(\mathcal{X})$ is defined by
$$
d_\mcW^2(\mu,\nu) := \inf_{\gamma\in\Gamma(\mu,\nu)} \int_{\mathcal{X}\times \mathcal{X}} d^2(x,y)\,d\gamma(x,y),
$$
where $\Gamma(\mu, \nu)$ denotes the set of transport plans between $\mu$ and $\nu$, say,
$$
\Gamma(\mu,\nu) := \left\{ \gamma\in\sP(\mathcal{X}\times \mathcal{X}) : \pi^1_\#\gamma=\mu,\ \pi^2_\#\gamma=\nu \right\}.
$$
Here $\pi^1,\pi^2: \mathcal{X}\times \mathcal{X}\to \mathcal{X}$ are the canonical projections onto the first and second components, respectively. 
We denote by $\Gamma_o(\mu,\nu)$ the set of optimal plans. In the special case of $\mathcal{X} = \R^d$, for $\mu \in \sP(\R^d)$ we define
$$M_2(\mu) := \bigg( \int_{\R^d} |x|^2 d \mu(x) \bigg)^{1/2}$$
so that $\mu \in \sP_2(\R^d)$ if and only if $M_2(\mu)  < +\infty$. We denote by $L^2_\mu$ the Lebesgue space of $\mu$-squared integrable mappings $\R^d \to \R^d$ with inner product $\inner{\cdot, \cdot}_\mu$ and induced norm $\norm{\cdot}_\mu$.\\

Finally, we say that a function $\mcG: \sP_2(\R^d) \to \R \cup \{ + \infty\}$ is {\it convex along plans} if for every $\mu, \nu \in \sP_2(\R^d)$ and any plan $\gamma \in \Gamma(\mu, \nu)$ it holds that
$$ \mcG\left( ( (1-t)\pi^1 + t \pi^2 )_\# \gamma \right) \leq (1-t) \mcG(\mu) + t \mcG(\nu), \qquad \forall t \in [0,1].$$

\subsection{Setting of the problem}\label{subsec: setting of the problem}

Let $T \in [0, + \infty)$ be a final horizon, $(U,d_U)$ a separable metric space and $\mu_0 \in \sP_2(\R^d)$ an initial condition. Let
$$f: [0,T] \times U \times \R^d \times \sP_2(\R^d) \to \R^d, \quad (t, u, x, \mu) \mapsto f[\mu](t,u,x) := f(t, u, x, \mu)$$
be a non-local dynamic. The space $\mcU_{ad}:= L^2((0,T);U)$ of admissible controls is defined as the space of all square-integrable mappings $[0;T] \to U$ modulo $\sL$-a.e. equality, endowed with the metric
$$d_{L^2}^2(u_1,u_2) := \int_0^T d_U^2(u_1(t), u_2(t)) dt, \qquad \forall u_1, u_2 \in \mcU_{ad}.$$

For a given $u \in \mcU_{ad}$, $\mu^u: [0,T] \to \sP_2(\R^d)$ stands for the (unique) solution of the non-local continuity equation
\begin{align}\tag{CE}\label{eq: continuity eq}
    \begin{cases}
        \partial_t\mu_t+\operatorname{div}\!\big(f[\mu_t](t,u_t,\cdot)\,\mu_t\big)=0 \qquad \forall t \in (0,T)\\
    \mu_{|t=0}=\mu_0,
    \end{cases}
\end{align}

where the solutions are understood in the distributional sense, i.e.,
$$\forall \vphi \in C_c^\infty((0,T) \times \R^d), \qquad \int_0^T \int_{\R^d} \partial_s \vphi(s,x) + \inner{\nabla_ x \vphi(s,x), f[\mu_s](s,u_s, x)} d \mu_s(x) dt = 0.$$

Under suitable assumptions on the dynamics (see Section \ref{sec: continuity eq}), for every $u \in \mcU_{ad}$ there is a unique curve $\mu^u_t \in AC([0,T]; \sP_2(\R^d))$ solving \eqref{eq: continuity eq}.\\

Let $\mcJ: \sP_2(\R^d) \to \R$ be a terminal-point cost, $L: [0,T] \times U \times \sP_2(\R^d) \to \R$ a Lagrangian and let $g: \sP_2(\R^d) \to \R$ be a constraint. We are interested in the optimal control problem
\begin{equation}\tag{OCP}\label{prob:P}
    \begin{aligned}
        \min_{u \in\mathcal{U}_{ad}} \quad J(u)&:= \mcJ(\mu^u(T)) + \int_0^T L(t, u(t), \mu^u(t))\,\mathrm{d}t \\
        \text{s.t} \quad &(\mu^u,u) \ \text{solves }   \left\{
          \begin{array}{l}
          \partial_t\mu_t+\operatorname{div}\!\big(f[\mu_t](t,u_t,\cdot)\,\mu_t\big)=0\\
          \mu_{|t=0}=\mu_0,
          \end{array}
          \right., \\
        &g(\mu^u(t)) \leq 0 \quad \forall t \in [0,T].
    \end{aligned}
\end{equation}

\begin{assu}[Dynamics]\label{hyp:dynamic-non-smooth}\textcolor{white}{.}
    \begin{enumerate}[i)]
      \item For every \((u,x,\mu)\), the map
      \(t\mapsto f[\mu](t,u,x)\) is measurable, and for a.e. \(t\in[0,T]\), the map
      \(
      (u,x,\mu)\mapsto f[\mu](t,u,x)
      \)
      is continuous.
    
      \item There exists \(\ell_f\in L^2(0,T)\) such that, for a.e.
      \(t\in[0,T]\), for every \(u,v\in U\), \(x,y\in\R^d\), and
      \(\mu,\nu\in\mathscr P_2(\R^d)\),
      \[
      \big|
      f[\mu](t,u,x)-f[\nu](t,v,y)
      \big|
      \le
      \ell_f(t)
      \big(
      d_U(u,v)+|x-y|+ d_\mcW(\mu,\nu)
      \big).
      \]
    
      \item There exist \(m_f\in L^1(0,T)\) and \(c_f>0\) such that, for
      a.e. \(t\in[0,T]\), for every \(u\in U\), \(x\in\R^d\), and
      \(\mu\in\mathscr P_2(\R^d)\),
      \[
      |f[\mu](t,u,x)|
      \le
      m_f(t)+c_f\big(d_U(u,o)+|x| + M_2(\mu)\big).
      \]

      \item For a.e. \(t\in[0,T]\), every \(u\in U\), and every
      \(\mu\in\mathscr P_2(\R^d)\), the map \(x\mapsto f[\mu](t,u,x)\) is
      continuously differentiable, and the map
      \[
      \mu\mapsto f[\mu](t,u,x)
      \]
      is differentiable \(\nabla_\mu f[\mu](t,u,x)(\cdot)\) in the sense of Definition \ref{def: differentiability on Wasserstein}. 

    
      \item For a.e. \(t\in[0,T]\) the derivatives \((u,x,\mu)\mapsto D_x f[\mu](t,u,x)\) and \((u,x,\mu,y) \mapsto \nabla_\mu f[\mu](t,u,x)(y)\) are continuous uniformly on bounded subsets. 
    \end{enumerate}
\end{assu}

\begin{assu}[Cost and constraint]\label{hyp:cost-non-smooth}
    \hfill
    
    The functions \(\mcJ\) and \(g\) are locally Lipschitz on \(\mathscr P_2(\R^d)\).
\end{assu}

\begin{assu}[Lagrangian]\label{hyp: assumptions lagrangian}
    \hfill
    
    \begin{enumerate}[i)]
        \item For every $\mu \in \sP_2(\R^d)$, the mapping $(t,u) \in [0,T] \times U \mapsto L(t,u, \mu)$ is continous.

        \item For every $\mu \in \sP_2(\R^d)$, there is $\epsilon > 0$ and $C >0$ such that for any $(t,u) \in [0,T] \times U$ it holds
        $$\forall \mu_1, \mu_2 \in B_\mcW[\mu, \epsilon], \qquad|L(t, u, \mu_2) - L(t, u, \mu_1)| \leq C d_\mcW(\mu_2, \mu_1).$$
          
        \item Either the mapping  \(\mu \mapsto L(t,u,\mu)\) is convex along plans for every \((t,u)\) or $L$ can be written as \(L(t,u,\mu) = L_1(t,u) + L_2(\mu)\).
    \end{enumerate}
    
\end{assu}

\subsection{Main result}

Define the Hamiltonian of the system \eqref{eq: continuity eq} as the mapping $H : [0,T] \times U \times\sP_2(T\R^d) \times \R \to \R$ given by
$$H(t,u, \gamma, \alpha) := \int_{T\R^d} \inner{r, f[\pi^1_\# \gamma](t,u,x)} d \gamma(x,r) - \alpha L(t,u, \pi^1_\# \gamma).$$

The main result of this manuscript is stated below. Here, $\partial_C$ stands for the Clarke subdifferential of Definition \ref{defi: clarke subdifferential}. The tangent space of $\R^d$ is denoted by $T\R^d \equiv \R^d \times \R^d$.

\begin{theorem}[Non-smooth PMP]\label{thm: non smooth pmp}
    Assume [A\ref{hyp:dynamic-non-smooth}]-[A\ref{hyp: assumptions lagrangian}]. Let \((\mu^*,u^*)\) be a local minimizer of
  \eqref{prob:P}. Then there exist
  \(
  \alpha^*\in[0,1]\),
  \(
  \lambda^*\in\mathcal M_+([0,T]),
  \)
  a co-state $\gamma^* \in BV \left([0,T],\mathscr{P}_2( T \R^d) \right)$, measurable selections
  \[
  \zeta_\mcJ^*\in\partial_C\mcJ(\mu_T^*), \quad
  \zeta_L^*(t)\in
  \partial_C L(t,u_t^*,\cdot)(\mu_t^*)
  \qquad\text{for a.e. }t\in[0,T],
  \]
  and 
  \(
  \zeta_g^*(t)\in
  \partial_C g(\mu_t^*)\)
 \(\lambda^*\text{-a.e. }t\in[0,T]\),
  such that the following properties hold.
    
    \begin{enumerate}
    \item \textbf{Non-triviality.}
    \(
    (\alpha^*,\lambda^*)\neq(0,0)
    \).
    
    \item \textbf{Complementary slackness.}
    \(
    \operatorname{supp}(\lambda^*)
    \subset
    \big\{
    t\in[0,T]\;:\; g(\mu_t^*)=0
    \big\}
    \).
    
    \item \textbf{Adjoint equation:}
    The co-state measure is given by
    \[
    \gamma_t^*
    =
    \big(\Phi^*_{(0,t)}(\cdot),P^*(t,\cdot)\big)_\#\mu_0,
    \qquad \forall t\in[0,T]
    \]
    where, for
    \[
    m^*(s,x)
    :=
    \int_{[0,s]}
    \zeta_g^*(t)\big(\Phi^*_{(0,t)}(x)\big)\,
    \lambda^*(dt),
    \qquad s\in[0,T],
    \]
    the mapping \(P^*\in BV\left([0,T];L^2_{\mu_0} \right) \) is the unique solution of
    \[
    \left\{
    \begin{aligned}
     dP^*(t,x)
    &=
    \bigg[- 
    D_x f[\mu_t^*]\big(t,u_t^*,\Phi^*_{(0,t)}(x)\big)^\top P^*(t,x)\\
    &\quad
    -
    \int_{\R^d}
    \nabla_\mu f[\mu_t^*]\big(t,u_t^*,\Phi^*_{(0,t)}(y)\big)
    \big(\Phi^*_{(0,t)}(x)\big)^\top P^*(t,y)\,d\mu_0(y)\\
    &\quad
    +
    \alpha^*
    \zeta_L^*(t)\big(\Phi^*_{(0,t)}(x)\big) \bigg]dt + dm^*(t,x)\\
    P^*(T,x)
    &=
    -\alpha^*
    \zeta_\mcJ^*\big(\Phi^*_{(0,T)}(x)\big)
    \end{aligned}
    \right.
    \]
    for $\mu_0$-almost every $x$. In particular,
    \(
    \pi^1_\#\gamma_t^*=\mu_t^*
    \).
    
    \item \textbf{Maximization condition:}
    For a.e. \(t\in[0,T]\),
    \[
    H(t,u_t^*,\gamma_t^*,\alpha^*)
    =
    \max_{u\in U}
    H(t,u,\gamma_t^*,\alpha^*).
    \]
    \end{enumerate}
\end{theorem}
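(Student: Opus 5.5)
The plan is the regularization-and-stability scheme announced in the introduction. First, using the Lasry--Lions regularization on $\sP_2(\R^d)$ from Section~\ref{sec: Lasry Lions on Wasserstein}, I replace $\mcJ$, $g$ and (in the case $L=L_1+L_2$) the term $L_2$ by smooth approximants $\mcJ_k$, $g_k$, $L_k$. These should converge locally uniformly on a $d_\mcW$-neighbourhood of the compact optimal trajectory $\{\mu_t^*\}_{t\in[0,T]}$, be Wasserstein-differentiable with uniformly bounded gradients, and satisfy a stability statement: whenever $\nu_k\to\mu$, every weak limit of $\nabla g_k(\nu_k)$, pulled back to $L^2_\mu$ through the parallel-transport map, lies in $\partial_C g(\mu)$. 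In the convex-along-plans case I would instead use the global supporting inequality for $\partial_C L$, which gives directly the first-order expansion needed along the perturbed flows.

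To make $u^*$ a minimizer of a regularized problem I use an Ekeland-type penalization. Consider $J_k(u)=\max\{\mcJ_k(\mu^u_T)+\int_0^T L_k-J(u^*)+\epsilon_k,\ \max_t g_k(\mu^u_t)\}$ on $\mcU_{ad}$ with the metric $d_{L^2}$. Here $\epsilon_k$ dominates the uniform approximation error and is small enough that $J_k(u^*)\le\epsilon_k+o(1)$ and $J_k>0$ on the local neighbourhood of $u^*$. Ekeland's principle then yields $u_k$ with $d(u_k,u^*)\le\sqrt{\epsilon_k}$ that minimizes $J_k+\sqrt{\epsilon_k}\,d(\cdot,u_k)$. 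Passing to a Ekeland metric of Lebesgue type if needed, I apply the diffusive perturbation of Proposition~\ref{prop:diffuse-linearization} at $u_k$. It gives a first-order expansion of $\mu^{u}_t$ in $L^2_{\mu_0}$ (via flows $\Phi_{(0,t)}$) along spike-diffused variations, which handles non-compactly supported $\mu_0$. Combining this with the smoothness of $\mcJ_k,g_k$ gives a smooth PMP at stage $k$. It produces multipliers $\alpha_k\ge0$, $\lambda_k\in\mcM_+([0,T])$ with $\alpha_k+\lambda_k([0,T])=1$, $\lambda_k$ supported where $g_k(\mu^{u_k}_t)$ is near its maximum, an adjoint $P_k$ solving the linear BV equation of Appendix~\ref{sec: well-posedness adjoint} with data $\nabla\mcJ_k$, $\nabla L_k$, $\nabla g_k$ composed with $\Phi^k_{(0,t)}$, and an approximate maximization condition with an $O(\sqrt{\epsilon_k})$ error.

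Finally I pass $k\to\infty$. Extract $\alpha_k\to\alpha^*$ and $\lambda_k\rightharpoonup^*\lambda^*$. Since the total mass is $1$ and $\mu^{u_k}\to\mu^*$ uniformly, I get non-triviality and complementary slackness. The flows $\Phi^k$ converge to $\Phi^*$ in $L^2_{\mu_0}$ uniformly in $t$, and the gradients $\nabla g_k(\mu^{u_k}_t)\circ\Phi^k_{(0,t)}$ are bounded in $L^2_{\mu_0}$. I would then extract weak limits in $L^2([0,T];L^2_{\mu_0})$ for $\zeta_L$, in $L^2_{\mu_0}$ for $\zeta_\mcJ$, and in $L^1_{\lambda}$-type spaces for $\zeta_g$. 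Applying the stability of $\partial_C$ together with its convexity and weak closedness, via a Mazur-type argument, places these limits pointwise in the Clarke subdifferentials at $\mu^*_t$. A measurable selection follows by Filippov/Aumann arguments. Linearity of the adjoint equation and the well-posedness estimates then give $P_k\to P^*$, and the maximization condition follows by passing to the limit in the integrated approximate inequality.

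The main obstacle is the limit for $\zeta_g$. Both the measures $\lambda_k$ and the integrands vary, so it is a joint weak-$\star$/weak convergence problem. The $\lambda^*$-a.e. inclusion in $\partial_C g(\mu^*_t)$ has to be proved via a disintegration/Young-measure argument of the form $\lambda_k\otimes\delta_{\zeta_k(t)}\rightharpoonup\lambda^*\otimes\nu_t$. One then shows that the barycentres of $\nu_t$ lie in $\partial_C g(\mu^*_t)$, using convexity and the stability property.
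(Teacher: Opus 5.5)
Your architecture (Lasry--Lions regularization, Ekeland's principle, diffuse perturbations, pullback of gradients, and a Young-measure limit for $\zeta_g$) parallels the paper's. The gap is the step you call a ``smooth PMP at stage $k$''. With the penalty $J_k=\max\{\,\cdot\,,\max_t g_k(\mu^u_t)\}$, the stage-$k$ functional is not differentiable along diffuse variations, even though $\mcJ_k$ and $g_k$ are. Danskin's formula only turns Ekeland's inequality into
\[
0\le \max_{(\alpha,\lambda)\in M_k}\Big(\alpha\,\delta A_k(v)+\int_{[0,T]}\delta g_k(v)(t)\,\lambda(dt)\Big)+T\sqrt{\epsilon_k}.
\]
Here $M_k$ is the subdifferential of the outer max: pairs with $\alpha+\lambda([0,T])=1$ that charge only the active components. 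The terms $\delta A_k(v)$ and $\delta g_k(v)$ are the first-order variations. The maximizing pair depends on the comparison control $v$. To get a single $(\alpha_k,\lambda_k)$ you need a minimax argument (e.g.\ Sion's theorem); only then do you have a single adjoint $P_k$ and a maximization condition valid for every $v$. The minimax step in turn needs the set of first-order variations to be convex. That requires a multi-control version of Proposition~\ref{prop:diffuse-linearization}, obtained by applying Lyapunov's theorem jointly to several comparison controls. None of this is in your plan.

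The paper avoids the issue altogether. It penalizes with $\mathscr J_\epsilon=\big(([J^\epsilon-J(u^*)+\epsilon]^+)^2+d_C(g^\epsilon\circ\mu^u)^2\big)^{1/2}$, where $d_C$ is computed in an equivalent norm on $C([0,T])$ whose dual is strictly convex. This makes $d_C$ G\^ateaux differentiable off $C$. The multipliers are then explicit gradients, independent of $v$: $\alpha^\epsilon=A_\epsilon/\mathscr J_\epsilon(u^\epsilon)$ and $\lambda^\epsilon=\frac{B_\epsilon}{\mathscr J_\epsilon(u^\epsilon)}\nabla d_C(g^\epsilon\circ\mu^\epsilon)$.

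Your convex-along-plans running cost has the same defect, plus a direction error. If you do not regularize $L$, the supporting inequality of Theorem~\ref{thm: clarke subdiff and convex mappings} gives
\[
L(t,u,\mu^\rho_t)-L(t,u,\mu_t)\ge\rho\,\langle\Xi\circ\Phi^k_{(0,t)},\Psi^k_v(t)\rangle_{L^2_{\mu_0}}+o(\rho).
\]
This is a lower bound on the increment, but Ekeland's inequality needs an upper bound. The correct upper bound is the support function $\max_{\Xi\in\partial_C L(t,u,\cdot)(\mu_t)}\langle\Xi\circ\Phi^k_{(0,t)},\Psi^k_v(t)\rangle$. Its maximizer again depends on $v$, so no single $\zeta_L$ emerges without the minimax step.

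In the paper, $L(t,u,\cdot)$ is regularized in both cases, since Lasry--Lions preserves convexity along plans (Proposition~\ref{prop: LL on Wasserstein}). The stage-$\epsilon$ expansion therefore uses the genuine gradient $\nabla_\mu L^\epsilon$. Convexity along plans enters only in the limit, through Proposition~\ref{prop:running-cost-LL-closedness}. It identifies weak limits of $\nabla_\mu L^\epsilon(t,u^\epsilon_t,\mu^\epsilon_t)$ while $u^\epsilon_t$ varies, which your fixed-functional stability statement does not cover.

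Finally, switching from $d_{L^2}$ to the Ekeland metric $d_E$ is mandatory, not optional. For a diffuse variation, $d_{L^2}(u^\rho,u_k)$ is of order $\sqrt\rho$, so $\sqrt{\epsilon_k}\,d_{L^2}(u^\rho,u_k)/\rho$ blows up as $\rho\searrow0$. For unbounded $U$, the paper also truncates to $\mcU^\epsilon_{ad}$ so that $d_{L^2}\le 2\epsilon^{-1/8}d_E^{1/2}$.
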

    \begin{remark}[Several state constraints]
    Theorem~\ref{thm: non smooth pmp} extends directly to a finite family of state
    constraints
    \[
    \psi_i(\mu_t)\le 0,
    \qquad i=1,\ldots,m.
    \]
    Indeed, setting
    \(
    \mu\mapsto\psi(\mu):=\max_{1\le i\le m}\psi_i(\mu),
    \)
    the family of constraints is equivalent to the single condition
    \[
    \psi(\mu_t)\le0.
    \]
    If each $\psi_i$ is globally Lipschitz, then so is $\psi$, and
    Theorem~\ref{thm: non smooth pmp} applies without any modification of its proof.
    Moreover, by Theorem~\ref{thm: computational properties Clarke subdiff},
    \[
    \partial_C\psi(\mu)
    \subset
    \operatorname{co}
    \bigcup_{i\in I(\mu)}
    \partial_C\psi_i(\mu),
    \qquad
    I(\mu):=
    \left\{
    i:\psi_i(\mu)=\max_j\psi_j(\mu)
    \right\}.
    \]
    Consequently, the constraint selection appearing in the adjoint equation
    satisfies, for $\lambda^*$-a.e. $t$,
    \[
    \zeta_\psi^*(t)
    \in
    \operatorname{co}
    \bigcup_{i\in I(\mu_t^*)}
    \partial_C\psi_i(\mu_t^*).
    \]
    In particular, on $\operatorname{supp}(\lambda^*)$, complementary slackness
    implies
    \[
    I(\mu_t^*)
    =
    \left\{
    i:\psi_i(\mu_t^*)=0
    \right\},
    \]
    so the measure contribution to the adjoint involves only subgradients of
    active constraints.
    \end{remark}
\begin{remark}[On the assumptions on the running cost]\label{rem:convexity-running-cost}
    As already mentioned, the proof of the above theorem relies on a regularization-and-stability argument. In the passage to the limit, one has to identify weak limits of gradients of the form 
    \[
    \nabla_\mu L^\epsilon
    \bigl(t^\epsilon,u^\epsilon,\mu^\epsilon\bigr), \qquad     (t^\epsilon,u^\epsilon,\mu^\epsilon)
    \longrightarrow
    (t,u,\mu).
    \]
    The main difficulty is that the stability results for the Lasry-Lions approximation (cf. Propositions \ref{prop:closed-graph-clarke-det} and \ref{prop: Clarke subdiff and Lasry Lions}) apply to the functional $L(t,u,\cdot)$ for fixed $t,u$, whereas $(t^\epsilon, u^\epsilon)$ varies as $\epsilon \searrow 0$. Under convexity along plans, Theorem \ref{thm: clarke subdiff and convex mappings} characterizes elements of the Clarke subdifferential through a global inequality. This characterization provides the additional stability needed to conclude that every pullback limit belongs to $\partial_C L(t,u,\cdot)(\mu)$, see Proposition~\ref{prop:running-cost-LL-closedness}.\\
    
    This convexity along plans can be avoided when the running cost has the separated
    form
    \[
    L(t,u,\mu)=L_1(t,u)+L_2(\mu).
    \]
    In this case,
    \[
    L^\epsilon(t,u,\mu)
    =
    L_1(t,u)+L_2^\epsilon(\mu),
    \]
    so its Wasserstein gradient is independent of \(t\) and \(u\). The
    identification of its weak limits then follows from the closedness result
    for the fixed functional \(L_2\). Consequently, the conclusion of
    Theorem~\ref{thm: non smooth pmp} remains valid, with
    \[
    \zeta_L^*(t)
    \in
    \partial_C L_2(\mu_t^*)
    =
    \partial_C L(t,u_t^*,\cdot)(\mu_t^*)
    \qquad\text{for a.e. }t\in[0,T].
    \]
    If $L$ does not satisfy assumption [A\ref{hyp: assumptions lagrangian}.iii],
    it is not clear whether the regularization-and-stability argument can be
    carried out.
\end{remark}
    
\begin{remark}[On the assumptions on the dynamics]
    The non-smoothness considered in this manuscript concerns the cost and constraint functionals, while the dynamics retain the regularity needed to perform a first-order analysis of the state equation.\\
    
    The Lipschitz and growth assumptions on \(f\) ensure the well-posedness of the continuity equation and its Lagrangian representation
    \[
    \mu_t^u=\bigl(\Phi^u_{(0,t)}\bigr)_\#\mu_0.
    \]
    They also provide the stability of the state with respect to the control, which is used throughout the variational and limiting arguments. In particular, perturbations of the control produce controlled perturbations of both the trajectory and the associated flow.\\ 
    
    The differentiability of \(f\) with respect to the state and measure variables allows one to obtain a first-order expansion of the flow. Together with the continuity of these derivatives, it yields the well-posedness and stability of the linearized state equation, whose dual equation gives the adjoint system appearing in the maximum principle.\\
    
    Removing the differentiability assumptions on the dynamics would make the analysis substantially more involved, since the linearized flow and the corresponding dual construction would no longer be directly available. It also remains unclear whether the graph-concentration property of the adjoint trajectory $\gamma_t^*$ is a consequence of the differentiability of $f$, or whether it persists for non-smooth dynamics.
\end{remark}

\section{Preliminaries on $\sP_2(\R^d)$}\label{sec: preliminaries on Wasserstein}

In this section we recall different constructions and tools on $\sP_2(\R^d)$. 

\subsection{Tangent space}
\hfill

Let $\mu \in \sP_2(\R^d)$. We define the space $\sP_2(T\R^d)_\mu$ as the set
$$\sP_2(T\R^d)_\mu := \{ \xi \in \sP_2(T\R^d) : \pi^1_\# \xi = \mu \} \subset \sP_2(T\R^d).$$

By the disintegration theorem \cite[Theorem 5.3.1]{ambrosioGradientFlowsMetric2005}, any measure $\xi \in \sP_2(T\R^d)_\mu$ can be written as $\xi(dx, dv) = \mu(dx) \xi_x(dv)$ where $\xi_x \in \sP_2(\R^d)$. The {\it barycentric} projection $\mcB: \sP_2(T\R^d)_\mu \to L^2_\mu$ is the mapping
\begin{align}\label{eq: definition barycentric projection}
    \xi \in \sP_2(T\R^d)_\mu \mapsto \bigg( x \mapsto \mcB[\xi](x) := \int v  d \xi_x(v) \bigg) \in L^2_\mu.
\end{align}

The {\it regular} tangent space of $\sP_2(\R^d)$ at $\mu$, denoted by $\Tanreg_\mu \sP_2(\R^d)$ is the set of functions
$$\Tanreg_\mu \sP_2(\R^d) := \overline{\nabla C_c^\infty(\R^d)}^{L^2_\mu} \subset L^2_\mu.$$
It is known \cite{ambrosioGradientFlowsMetric2005} that $\Tanreg_\mu \sP_2(\R^d)$ corresponds to the set of limit points of mappings $T: \R^d \to \R^d$ that are optimal in small time, i.e., of the mappings for which $ (\id, \id + t T)_\# \mu \in \Gamma_o(\mu, (\id + tT)_\# \mu)$ for $t$ small. 

There is a natural injection $\iota_\mu: L^2_\mu \hookrightarrow \sP_2(T\R^d)_\mu$ given by $\psi \mapsto \iota_\mu(\psi) :=(\id, \psi)_\# \mu$. By definition, it holds that $\mcB \circ \iota_\mu = \id$ in $L^2_\mu$.

\begin{remark}
    Although $\sP_2(T\R^d)_\mu$ is not a tangent space in the metric-geometric sense, the variations arising in the present control problem are generally not geodesic. This makes $\sP_2(T\R^d)_\mu$ the appropriate space to work on.
\end{remark}

\subsection{Lifting technique}

The lifting technique on $\sP_2(\mathbb{R}^d)$ was introduced by P.L. Lions in \cite{lionslifting2006} as a tool for the study of Hamilton-Jacobi equations on this space. Since then, it has found applications well beyond this original setting. In this manuscript, we  use this technique many times; see, e.g., Sections~\ref{sec: Clarke subdiff on Wasserstein} and \ref{sec: Lasry Lions on Wasserstein}.\\

From now on,  we fix a complete non-atomic probability space $(\Omega; \mbP)$ where $\Omega$ is a Polish metric space. We denote by $L^2_\mbP$ the space of random variables $X: \Omega \to \R^d$ that are squared-integrable, and we often write $X \sim \mu$ if $X_\# \mbP = \mu$. It is well-known that the mapping $X \in L^2_\mbP \mapsto X_\# \mbP \in \sP_2(\R^d)$ is surjective, see, e.g., \cite[Lemma 5.29]{carmonaProbabilisticTheoryMean2018}.\\

On the one hand, any function $\mcG: \sP_2(\R^d) \to \R$ can be lifted to a function $\hat{\mcG}: L^2_\mbP \to \R$ by $X \mapsto \hat{\mcG}(X) := \mcG(X_\# \mbP)$. On the other hand, any function $\hat{\mcG}: L^2_\mbP\to \R$ that is law-invariant, meaning that $\hat{\mcG}(X)$ only depends on $X_\# \mbP$, induces a map $\mcG: \sP_2(\R^d)\to \R$ through
$$\mcG(\mu) := \hat{\mcG}(X) \quad \text{ with } X_\# \mbP = \mu.$$
By definition, $\hat{\mcG}$ is the actual lift of $\mcG$. A direct application of \cite[(5.14) p. 358]{carmonaProbabilisticTheoryMean2018} yields the following.

\begin{proposition}
    A function $\mcG$ is Lipschitz continuous if and only if $\hat{\mcG}$ is. Similarly, if $\hat{\mcG}: L^2_\mbP \to \R$ is Lipschitz continuous and law-invariant, then its induced mapping $\mcG: \sP_2(\R^d) \to \R$ is Lipschitz continuous as well. In any case, the Lipschitz constant for $\mcG$ and $\hat{\mcG}$ is the same.
\end{proposition}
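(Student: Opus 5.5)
The proof reduces to a single identity: for every pair $X,Y\in L^2_\mbP$, the distance $d_\mcW(X_\#\mbP,Y_\#\mbP)$ is controlled by $\norm{X-Y}_{L^2_\mbP}$, and conversely $\norm{X-Y}_{L^2_\mbP}$ can be made arbitrarily close to $d_\mcW(X_\#\mbP,Y_\#\mbP)$ by suitable representatives. This is the content of \cite[(5.14) p. 358]{carmonaProbabilisticTheoryMean2018}:
\[
d_\mcW(\mu,\nu)=\inf\left\{\norm{X-Y}_{L^2_\mbP}\,:\, X\sim\mu,\ Y\sim\nu\right\}.
\]
The inequality ``$\le$'' holds because $(X,Y)_\#\mbP\in\Gamma(\mu,\nu)$. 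The reverse direction, with the infimum approximated and in fact attained, uses that $(\Omega,\mbP)$ is Polish and non-atomic: for an optimal $\gamma\in\Gamma_o(\mu,\nu)$ there is a pair $(X,Y)\sim\gamma$. I take this as given.

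\textbf{Step 1: $\mcG$ Lipschitz implies $\hat\mcG$ Lipschitz.} Suppose $\mcG$ is $C$-Lipschitz and let $X,Y\in L^2_\mbP$. Then
\[
|\hat\mcG(X)-\hat\mcG(Y)|=|\mcG(X_\#\mbP)-\mcG(Y_\#\mbP)|\le C\,d_\mcW(X_\#\mbP,Y_\#\mbP)\le C\norm{X-Y}_{L^2_\mbP},
\]
where the last inequality is the coupling bound above. Hence $\Lip(\hat\mcG)\le\Lip(\mcG)$.

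\textbf{Step 2: $\hat\mcG$ Lipschitz and law-invariant implies $\mcG$ Lipschitz.} This is the step that needs the realization property of $\Omega$. Let $\hat\mcG$ be $C$-Lipschitz and law-invariant, and let $\mu,\nu\in\sP_2(\R^d)$. Choose $X\sim\mu$ and $Y\sim\nu$ with $\norm{X-Y}_{L^2_\mbP}\le d_\mcW(\mu,\nu)+\epsilon$. Law-invariance guarantees that $\mcG(\mu)=\hat\mcG(X)$ is independent of the chosen representative, so
\[
|\mcG(\mu)-\mcG(\nu)|=|\hat\mcG(X)-\hat\mcG(Y)|\le C\bigl(d_\mcW(\mu,\nu)+\epsilon\bigr).
\]
Letting $\epsilon\to0$ gives $\Lip(\mcG)\le\Lip(\hat\mcG)$. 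A lifted function is automatically law-invariant, so this argument covers both the converse of the first claim and the second claim.

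\textbf{Conclusion.} Combining Steps 1 and 2 shows that $\mcG$ is Lipschitz if and only if $\hat\mcG$ is, with $\Lip(\mcG)=\Lip(\hat\mcG)$. The only non-routine ingredient is the existence of nearly optimal couplings on the fixed space $\Omega$, and that is exactly what the cited identity provides.
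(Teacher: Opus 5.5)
Your proposal is correct and follows the paper's approach: the paper's proof is just a citation of the coupling representation $d_\mcW(\mu,\nu)=\inf\{\norm{X-Y}_{L^2_\mbP} : X\sim\mu,\ Y\sim\nu\}$ on the non-atomic Polish space (Carmona--Delarue, (5.14)). Your Steps 1 and 2 spell out the two inequalities $\Lip(\hat\mcG)\le\Lip(\mcG)$ and $\Lip(\mcG)\le\Lip(\hat\mcG)$ that this identity yields, with law-invariance making $\mcG$ well defined.
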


\subsection{Differential calculus}

The study of differentials and semi-differentials on $\sP_2(\R^d)$ is a broad subject. An intrinsic notion of semi-differentials on $\sP_2(\R^d)$ was first introduced in \cite{ambrosioGradientFlowsMetric2005}, while an extrinsic definition of differentiability was exposed in \cite{lionslifting2006} for the study of Hamilton-Jacobi equations and further studied in \cite{carmonaProbabilisticTheoryMean2018}; see also \cite{cardaliaguetNotesMeanField}. In \cite{gangboDifferentiabilityWassersteinSpace2019} the connection between these two points of view is made.

\begin{definition}[Differentiable functions]\label{def: differentiability on Wasserstein}
    Let $\mcG: \sP_2(\R^d) \to \R$ be a function. We say that $\mcG$ is differentiable at $\mu \in \sP_2(\R^d)$ if there is a mapping $\xi \in L^2_\mu$ such that for every $\gamma \in \sP_2(\R^{2d})_{\mu}$ it holds that
    \begin{align}\label{eq: diff on Wasserstein}
        \mcG(\pi^2_\# \gamma) = \mcG(\mu) + \int \inner{\xi(x), y-x} d \gamma(x,y) + o\left(\norm{\pi^2 - \pi^1}_\gamma \right).
    \end{align}
    In such a case, the mapping $\xi$ is unique in $\Tanreg_\mu \sP_2(\R^d)$. We denote it by $\nabla_\mu \mcG(\mu)$.
\end{definition}

\begin{definition}[$C^{1,1}$ mappings]\label{def: C11 on Wass}
    We say that a mapping $\mcG: \sP_2(\R^d) \to \R$ is of class $C^{1,1}$ if its lift $\hat{\mcG}$ is of class $C^{1,1}$ on $L^2_\mbP$. Equivalently, $\mcG$ is of class $C^{1,1}$ if it is differentiable in the sense of Definition \ref{def: differentiability on Wasserstein} and there is $L_\mcG > 0$ such that
    $$\int_{\R^{2d}} | \nabla_\mu \mcG(\nu) (y) - \nabla_\mu \mcG(\mu) (x)|^2 d \gamma(x,y) \leq L_\mcG^2 \int_{\R^{2d}} |y-x|^2 d \gamma(x,y), \qquad \forall \gamma \in \Gamma(\mu,\nu).$$
\end{definition}

The following theorem is due to \cite{gangboDifferentiabilityWassersteinSpace2019}. We point out that the structure of the differential of a law-invariant mapping was already known by P.L. Lions in the case of $C^{1,1}$ mappings, see \cite{lionslifting2006}, \cite[Proposition 5.24 and 5.25]{carmonaProbabilisticTheoryMean2018} or \cite[Theorem 6.5]{cardaliaguetNotesMeanField}.

\begin{theorem}\label{thm:gangbo-tudorascu}
 A mapping $\mcG: \sP_2(\R^d) \to \R$ is differentiable at $\mu$ if and only if its lifted function $\hat{\mcG}$ is differentiable at every random variable $X \sim \mu$. In this case, it holds that $\nabla \hat{\mcG}(X) = \nabla_\mu \mcG \circ X$ for every $X \sim \mu$.
\end{theorem}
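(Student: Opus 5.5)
The statement to prove is Theorem~\ref{thm:gangbo-tudorascu}: $\mcG$ is differentiable at $\mu$ (Definition~\ref{def: differentiability on Wasserstein}) if and only if the lift $\hat{\mcG}$ is Fréchet differentiable at every $X\sim\mu$, with $\nabla\hat{\mcG}(X)=\nabla_\mu\mcG\circ X$. I would prove the two implications separately, using the coupling structure of $L^2_\mbP$.

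\emph{Intrinsic to lift.} Assume $\mcG$ is differentiable at $\mu$ with gradient $\xi\in L^2_\mu$, and fix $X\sim\mu$. For $H\in L^2_\mbP$, set $\gamma:=(X,X+H)_\#\mbP\in\sP_2(\R^{2d})_\mu$, so that $\pi^2_\#\gamma=(X+H)_\#\mbP$. Then
\[
\int\inner{\xi(x),y-x}\,d\gamma=\mathbb E\inner{\xi(X),H},\qquad \norm{\pi^2-\pi^1}_\gamma=\norm{H}_{L^2_\mbP}.
\]
Expansion \eqref{eq: diff on Wasserstein} therefore reads $\hat{\mcG}(X+H)=\hat{\mcG}(X)+\mathbb E\inner{\xi(X),H}+o(\norm{H})$.

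One subtlety is that the $o(\cdot)$ in the definition must be understood uniformly over plans, i.e.\ as a modulus depending only on $\norm{\pi^2-\pi^1}_\gamma$. I would read the definition that way. Then $\hat{\mcG}$ is Fréchet differentiable at $X$ with gradient $\xi\circ X\in L^2_\mbP$.

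\emph{Lift to intrinsic.} Assume $\hat{\mcG}$ is differentiable at every $X\sim\mu$. The plan has three steps.

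\textbf{(a) The gradient is a function of $X$.} I would show $\nabla\hat{\mcG}(X)=\xi\circ X$ for some $\xi\in L^2_\mu$. Law-invariance gives $\hat{\mcG}(X\circ\tau)=\hat{\mcG}(X)$ for every measure-preserving bijection $\tau$ of $\Omega$. Hence $\nabla\hat{\mcG}(X\circ\tau)=\nabla\hat{\mcG}(X)\circ\tau$, and in particular $\nabla\hat{\mcG}(X)=\nabla\hat{\mcG}(X)\circ\tau$ whenever $X\circ\tau=X$. Since $\Omega$ is non-atomic and Polish, such $\tau$ act ergodically enough on each fibre $\{X=x\}$. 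This should force $\nabla\hat{\mcG}(X)$ to be $\sigma(X)$-measurable, i.e.\ of the form $\xi_X\circ X$.

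\textbf{(b) The function $\xi$ does not depend on the representative.} For two representatives $X,X'\sim\mu$ there are measure-preserving maps $\tau_n$ with $X\circ\tau_n\to X'$ in $L^2_\mbP$. I would then need continuity of the gradient along this approximation, which is not available for merely differentiable functions. Instead I would compare first-order expansions directly: the gradient is determined by directional derivatives in directions $\phi(X)$ with $\phi\in C_b$, and these derivatives are law-invariant quantities.

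\textbf{(c) Expansion along general plans.} Given $\gamma\in\sP_2(\R^{2d})_\mu$, non-atomicity provides $(X,Y)\sim\gamma$ with $X\sim\mu$. Differentiability of the lift at $X$ then yields \eqref{eq: diff on Wasserstein} with gradient $\xi$. Uniformity of the remainder over $\gamma$ needs a fixed $X$ that can carry every plan. I would try to obtain this by enlarging $\Omega$ to $\Omega\times[0,1]$, which is legitimate because $\Omega$ is non-atomic, or by citing the standard isomorphism of non-atomic standard spaces.

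\textbf{Uniqueness.} Uniqueness of $\xi$ in $\Tanreg_\mu\sP_2(\R^d)$ follows by testing \eqref{eq: diff on Wasserstein} with the plans $(\id,\id+t\nabla\varphi)_\#\mu$ for $\varphi\in C_c^\infty(\R^d)$.

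\textbf{Main obstacle.} Steps (a) and (b), the measurability of the gradient with respect to $\sigma(X)$ and its independence of the representative, are the hard part. This is the core of Gangbo--Tudorascu, and I would follow their rearrangement argument.
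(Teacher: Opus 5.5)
The paper gives no proof of this statement; it cites Gangbo--Tudorascu. Your first direction is correct, with the remainder read as a modulus depending only on $\norm{\pi^2-\pi^1}_\gamma$, which is the intended meaning. Your argument in (b) is also fine once (a) holds.

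The genuine gap is step (a) as you justify it for an arbitrary $X\sim\mu$. Exact invariance of $Z:=\nabla\hat{\mcG}(X)$ under measure-preserving bijections $\tau$ with $X\circ\tau=X$ does not force $\sigma(X)$-measurability. Non-atomicity of $\Omega$ says nothing about the conditional laws of $\mbP$ on the fibres of $X$, and these may be atomic with unequal weights. For example, take $\Omega=[0,1]$ with Lebesgue measure, $X(\omega)=\omega$ on $[0,1/3)$ and $X(\omega)=(\omega-1/3)/2$ on $[1/3,1]$. The fibre $\{x,1/3+2x\}$ carries conditional weights $1/3$ and $2/3$. A $\tau$ fixing $X$ that swaps the two points on a set $A\subset[0,1/3)$ sends $A$ onto a set of measure $2\sL(A)$, so $\tau=\id$ a.e. Your invariance therefore cannot exclude $Z$ being the indicator of $[0,1/3)$, which is not a function of $X$.

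You need a further idea here, and there are two ways to supply it.

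\emph{General $X$.} Use the uniformity of the Fréchet remainder instead of exact invariance. Let $\sigma_n$ be measure-preserving bijections with $\delta_n:=\norm{X\circ\sigma_n-X}_{L^2_\mbP}\to0$. Expand both sides of $\hat{\mcG}(X+sK)=\hat{\mcG}(X\circ\sigma_n+sK\circ\sigma_n)$ and of $\hat{\mcG}(X\circ\sigma_n)=\hat{\mcG}(X)$ at $X$, with $s=\sqrt{\delta_n}$. This gives $Z\circ\sigma_n^{-1}\rightharpoonup Z$. Choosing $\sigma_n$ that mix the preimages of cubes of vanishing size gives $Z\circ\sigma_n^{-1}\rightharpoonup\mathbb{E}[Z\mid X]$, so $Z=\mathbb{E}[Z\mid X]$.

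\emph{Shorter route, which also settles your step (c).} Use the isomorphism theorem for standard non-atomic spaces rather than enlarging $\Omega$, which would change the lift. Identify $(\Omega,\mbP)$ with $(\R^d\times[0,1],\mu\otimes\sL)$ and work only at the single point $X_\star(x,s):=x$.
\begin{itemize}
\item Invariance under $\id\times T$ with $T$ an irrational rotation, plus Fubini, gives $\nabla\hat{\mcG}(X_\star)=\xi\circ X_\star$ with $\xi(x)=\int_0^1\nabla\hat{\mcG}(X_\star)(x,s)\,ds$.
\item Every plan $\gamma(dx,dy)=\mu(dx)\gamma_x(dy)$ is the law of $(X_\star,G)$ for a measurable $G$ with $G(x,\cdot)_\#\sL=\gamma_x$. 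So Fréchet differentiability at $X_\star$ alone yields the intrinsic expansion with a $\gamma$-independent modulus.
\item Your first direction then gives Fréchet differentiability at every $X\sim\mu$ with gradient $\xi\circ X$. By uniqueness of Fréchet gradients this equals $\nabla\hat{\mcG}(X)$, so (a) for general $X$ and (b) become unnecessary.
\end{itemize}

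Finally, for uniqueness, test with $(\id,\id+t\psi)_\#\mu$ for all $\psi\in L^2_\mu$, not only $\psi=\nabla\varphi$. Gradients only determine the projection of $\xi$ onto $\Tanreg_\mu\sP_2(\R^d)$, whereas the identity $\nabla\hat{\mcG}(X)=\nabla_\mu\mcG\circ X$ requires $\xi$ itself.
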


\subsection{Parallel transport}

We previously saw that the gradient of a function $\mcG: \sP_2(\R^d) \to \R$ at a measure $\mu$ is an element belonging to $L^2_\mu$. This generates a problem when one needs to compare the gradients of a function at two different measures. This is why having a notion  of {\it parallel transport} on $\sP_2(\R^d)$ becomes necessary. Here we present its definition. This subsection is based in \cite{bertucci2025tangent}.

\begin{definition}[Parallel transport]\label{def:PT}
  Let $\mu,\nu\in\Ptwo$, $\gamma\in\Gamma(\mu,\nu)$ and $\eta \in \sP_2(T\R^d)_\nu$. Let $\eta(dy,dz)=\nu(dy)\psi_y(dz)$ be the disintegration of $\eta$. The \emph{parallel transport along $\gamma$} of $\eta$ is the element $\PT_\gamma(\eta) \in \sP_2(T\R^d)_\mu$ defined by
  \[
  \mathrm{PT}_\gamma(\eta)(dx,dz)
  :=\int_{\R^d}\psi_y(dz)\,\gamma(dx,dy).
  \]
  Equivalently, disintegrating $\gamma(dx,dy)=\mu(dx)\gamma_x(dy)$,
  \[
  \mathrm{PT}_\gamma(\eta)(dx,dz)=\mu(dx)\,\widetilde\psi_x(dz),
  \qquad
  \widetilde\psi_x:=\int_{\R^d}\psi_y\,\gamma_x(dy).
  \]

\end{definition}

One might think that the parallel transport of a mapping $\zeta \in L^2_\nu$ remains to be a mapping if we parallel transport it to a measure $\nu$. However this is not the case. In fact, if $\nu$ is concentrated on two points and $\mu$ is a Dirac mass, parallel transporting a mapping $\zeta \in L^2(\nu)$ from $\nu$ to $\mu$, where $\zeta(x) \neq 0$ for $\nu$-a.e. $x$, yields a measure on $\sP_2(T\R^d)_\mu$ that is not concentrated on a graph. To circumvent this problem, we introduce the {\it pullback} mapping.

\begin{definition}[Pullback mapping]\label{def:pullback-coupling}
    Let \(\mu,\nu\in\Ptwo\) and \(\gamma\in\Gamma(\mu,\nu)\). The \emph{pullback} mapping $\PP_\gamma: L^2_\nu \to L^2_\mu$ is defined by
    \[
    \beta\mapsto \mathsf P_\gamma \beta
    :=
    \mcB\big[\PT_\gamma(\iota_\nu(\beta)) \big]
    \in L^2_\mu.
    \]
    Equivalently, if $\gamma(dx,dy)=\mu(dx)\gamma_x(dy),$
    then
    \[ \mathsf P_\gamma \beta(x) = \int_{\R^d} \beta(y)\,\gamma_x(dy) \qquad \mu\text{-a.e. }x. \]
    In particular, by Jensen's inequality it holds that
    \begin{align}\label{ineq: bounds-pullback}
        \forall \beta \in L^2_\nu, \quad \forall \gamma \in \Gamma(\mu, \nu), \qquad \norm{ \mathsf{P}_\gamma \beta }_\mu \leq  \norm{\beta}_\nu.
    \end{align}
    
\end{definition}

\subsection{Continuity equation}\label{sec: continuity eq}

We recall some well-posedness and stability properties of the controlled
continuity equation~\eqref{eq: continuity eq} (see
\cite[Theorem~2]{bonnet2021differential} and
\cite[Theorem~2.15]{bonnet2019pontryagin}). Under
Assumption~\ref{hyp:dynamic-non-smooth}, for every
\(u\in\mcU_{ad}\), there exists a
unique solution $\mu^u\in\AC\bigl([0,T];\sP_2(\R^d)\bigr)$ of~\eqref{eq: continuity eq}. The flow associated with the vector field
\(
(t,x)\longmapsto f[\mu_t^u](t,u_t,x)
\)
is denoted by \(\Phi^u_{(0,t)}\) and satisfies
\begin{equation}\label{eq:def-flow}
\Phi^u_{(0,t)}(x)
=
x+
\int_0^t
f[\mu_s^u]
\bigl(s,u_s,\Phi^u_{(0,s)}(x)\bigr)\,ds.
\end{equation}
For every \(t\in[0,T]\), the map \(\Phi^u_{(0,t)}\) is invertible, and
its inverse is denoted by \(\Phi^u_{(t,0)}\). The solution admits the
representation
\begin{equation}\label{eq:flow-representation}
\mu_t^u
=
\bigl(\Phi^u_{(0,t)}\bigr)_\#\mu_0,
\qquad t\in[0,T].
\end{equation}

Let \(\mu^u\) and \(\nu^v\) be the solutions associated with controls
\(u,v\in\mcU_{ad}\) and initial conditions \(\mu_0,\nu_0\in\sP_2(\R^d)\), respectively. By
\cite[Proposition~4.4]{urrea2026pontryagin}, there exists a constant
\(C>0\), depending only on \(T\) and the constants in
[A\ref{hyp:dynamic-non-smooth}], such that
\begin{equation}\label{eq:stability-continuity-equation}
\sup_{t\in[0,T]}
d_\mcW(\mu_t^u,\nu_t^v)
\leq
C\left(
d_{L^2}(u,v)+d_\mcW(\mu_0,\nu_0)
\right)
\end{equation}
For a fixed initial condition \(\mu_0\in\sP_2(\R^d)\), the corresponding flows satisfy
\begin{align}\label{eq: stability flow}
    \left\|\Phi^u_{(0,\cdot)}-\Phi^v_{(0,\cdot)} \right\|_{C([0,T];L^2_{\mu_0})}
    \leq
C\,d_{L^2}(u,v).
\end{align}

In particular, if \(u_n\to u\) in \(\mcU_{ad}\), then
\(
\Phi^{u_n}_{(0,\cdot)}
\longrightarrow
\Phi^u_{(0,\cdot)}\)in \(
C\bigl([0,T];L^2_{\mu_0}\bigr)
\).

\begin{remark}
    We stress that in \cite[Proposition~4.4]{urrea2026pontryagin} the statements are announced for measures $\mu_0, \nu_0 \in \sP_c(\R^d)$. However, the same proof remains valid if $\mu_0$ and $\nu_0$ do not have compact support.
\end{remark}






\section{Clarke subdifferential on $\mathscr{P}_2(\R^d)$}\label{sec: Clarke subdiff on Wasserstein}

In this section we introduce a new notion of subdifferential on $\P_2(\R^d)$ which we call the Clarke subdifferential. For a given Hilbert space $H$ and a set $C \subset H$, $\sigma_C: H \to \R$ stands for the support function of $C$ defined as
$$\sigma_C(v) := \sup_{ x \in C} \inner{v,x}.$$

\subsection{Definition and main properties}

We begin by introducing a notion of generalized derivative. For this, we denote by $C_1(\R^d;\R^d)$ the space of continuous functions with at most linear growth, say,
$$C_1(\R^d;\R^d) := \bigg\{ w \in C(\R^d; \R^d) : \quad \sup_{x \in \R^d} \frac{|w(x)|}{1 + |x|} < + \infty \bigg\}.$$ 

\begin{definition}[Generalized derivative]
    Let $\mcG: \sP_2(\R^d) \to \R$ be locally Lipschitz around $\mu \in \sP_2(\R^d)$. The generalized derivative of $\mcG$ at $\mu$ in the direction $w \in C_1(\R^d;\R^d)$ is defined as the mapping
    $$D \mcG(\mu)(w) := \limsup_{\substack{\nu \to \mu\\\ t \searrow 0}} \frac{ \mcG( (\id + t w)_\# \nu) - \mcG(\nu)}{t}.$$
\end{definition}
It can be proven (see Proposition \ref{prop:basic-properties-clarke-subdiff} below) that $D \mcG(\mu)$ admits a unique Lipschitz extension $L^2_\mu \to \R$ which we still denote by $D \mcG(\mu)$.

\begin{definition}[Clarke subdifferential]\label{defi: clarke subdifferential}
    The Clarke subdifferential $\partial_C \mcG(\mu)$ of $\mcG$ at $\mu$ is defined as the set
    \[
    \begin{aligned}
    \partial_C \mcG(\mu)
    :=\left\{\Xi \in L_{\mu}^{2}:\forall w \in C_1(\R^d;\R^d), \quad
    \langle\Xi, w\rangle_{\mu} \leq D \mcG(\mu)(w)\right\} .
    \end{aligned}
    \]
\end{definition}

The squared Wasserstein distance is the basic example motivating the present non-smooth framework, since it is locally Lipschitz but generally fails to be smooth. The following theorem nevertheless provides a complete formula for its Clarke-subdifferential.

\begin{theorem}[Clarke subdifferential of the squared Wasserstein distance]\label{thm: clarke subdiff of wass dist}
    Let $\mu, \sigma \in \sP_2(\R^d)$. Then,
    $$\partial_C d_\mcW^2(\cdot,\sigma)(\mu) = \overline{\operatorname{conv}\bigg( \left\{ 2 \id- 2\mcB[\gamma] : \gamma \in \Gamma_o(\mu, \sigma) \right\} \bigg)}^{L^2_\mu}.$$
\end{theorem}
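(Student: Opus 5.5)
Set $\mcG := d_\mcW^2(\cdot,\sigma)$ and $K := \overline{\operatorname{conv}}\{2\id-2\mcB[\gamma] : \gamma\in\Gamma_o(\mu,\sigma)\}$. Since $\partial_C\mcG(\mu)$ is closed, convex and, by Proposition \ref{prop:basic-properties-clarke-subdiff}, equal to $\{\Xi\in L^2_\mu:\langle \Xi,w\rangle_\mu\le D\mcG(\mu)(w)\ \forall w\}$, with $D\mcG(\mu)$ Lipschitz, sublinear and extended to $L^2_\mu$, it suffices to identify the support function. We will show
\[
D\mcG(\mu)(w)=\max_{\gamma\in\Gamma_o(\mu,\sigma)} \int \langle 2x-2y, w(x)\rangle\, d\gamma(x,y)=\sigma_K(w)
\]
for every $w\in C_1(\R^d;\R^d)$, and then extend to all $w\in L^2_\mu$ by density and continuity. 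The identity $\int\langle 2x-2y,w(x)\rangle d\gamma=\langle 2\id-2\mcB[\gamma],w\rangle_\mu$ holds by disintegration. Once both sides agree, $K=\partial_C\mcG(\mu)$ by the Hahn--Banach separation theorem, since two closed convex sets with equal support functions coincide. Note that $\Gamma_o(\mu,\sigma)$ is narrowly compact, which makes the max attained.

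\textbf{Lower bound ($\ge$).} Fix $\gamma\in\Gamma_o(\mu,\sigma)$ and take $\nu=\mu$ in the $\limsup$. Using the plan $((\id+tw)\circ\pi^1,\pi^2)_\#\gamma$ as a competitor gives
\[
\mcG((\id+tw)_\#\mu)-\mcG(\mu)\le 2t\int\langle x-y,w(x)\rangle d\gamma+t^2\|w\|_\mu^2 .
\]
This inequality only bounds the difference from above, which is the wrong direction. The bound we actually need comes from the \emph{reversed} choice: the $\limsup$ allows $\nu_t\to\mu$, so set $\nu_t:=(\id-tw)_\#\mu$. We want $\mcG((\id+tw)_\#\nu_t)$ to be close to $\mcG(\mu)$. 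This is delicate because $(\id+tw)\circ(\id-tw)\neq\id$; however, $w$ is continuous, so $(\id+tw)\circ(\id-tw)=\id+o(t)$ in $L^2_\mu$ by dominated convergence, using the linear growth of $w$. The Lipschitz property of $\mcG$ then gives $\mcG((\id+tw)_\#\nu_t)=\mcG(\mu)+o(t)$. Applying the upper-bound estimate above with $-w$, using the plan $((\id-tw)\circ\pi^1,\pi^2)_\#\gamma\in\Gamma(\nu_t,\sigma)$, we get $\mcG(\nu_t)\le\mcG(\mu)-2t\int\langle x-y,w(x)\rangle d\gamma+O(t^2)$. Hence the difference quotient along $\nu_t$ is at least $2\int\langle x-y,w\rangle d\gamma-o(1)$. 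Taking the supremum over $\gamma$ yields $D\mcG(\mu)(w)\ge\sigma_K(w)$.

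\textbf{Upper bound ($\le$), the main obstacle.} Take sequences $\nu_n\to\mu$ and $t_n\searrow0$ realizing the $\limsup$, and pick $\gamma_n\in\Gamma_o(\nu_n,\sigma)$. The plan $((\id+t_nw)\circ\pi^1,\pi^2)_\#\gamma_n$ is admissible between $(\id+t_nw)_\#\nu_n$ and $\sigma$, so
\[
\frac{\mcG((\id+t_nw)_\#\nu_n)-\mcG(\nu_n)}{t_n}\le 2\int\langle x-y,w(x)\rangle d\gamma_n+t_n\int|w|^2d\nu_n .
\]
By stability of optimal plans, up to a subsequence $\gamma_n\to\gamma\in\Gamma_o(\mu,\sigma)$ in $\sP_2(\R^{2d})$; the marginals converge in $\sP_2$, hence the sequence has uniformly integrable second moments. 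The integrand $\langle x-y,w(x)\rangle$ is continuous with quadratic growth, thanks to the linear growth of $w$, so the integrals converge to $2\int\langle x-y,w\rangle d\gamma\le\sigma_K(w)$. The most care is needed in this passage to the limit, namely the uniform integrability and the restriction to $w\in C_1$, and in checking that the extension of $D\mcG(\mu)$ from $C_1$ to $L^2_\mu$ is the one prescribed by Proposition \ref{prop:basic-properties-clarke-subdiff}. Since $\sigma_K$ is $L^2_\mu$-continuous (as $K$ is bounded) and $C_1$ is dense in $L^2_\mu$, the equality of the two functions extends to all of $L^2_\mu$, which completes the proof.
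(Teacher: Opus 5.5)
Your argument is correct, and it takes a genuinely different route from the paper. The paper never computes $D\mcG(\mu)$ directly. It lifts to $L^2_\mbP$ and writes $\hat{\mcG}(X)=|X|^2+M_2^2(\sigma)-2\sup_{Y\sim\sigma}\inner{X,Y}$, which is smooth plus concave, so $\partial_C\hat{\mcG}(X)$ equals the Fr\'echet superdifferential. It then combines Proposition~\ref{prop: clarke subdifferential and lifting} with the lifting identity for strong Fr\'echet superdifferentials to obtain $\partial_C\mcG(\mu)=\mcB[\boldsymbol{\partial}^S\mcG(\mu)]$. Finally, it pushes the plan-level description of $\boldsymbol{\partial}^S\mcG(\mu)$ from Proposition~\ref{prop: strong frechet superdifferential of wass dist} through the barycentric projection. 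That description rests on Aussedat's characterization, tangent and solenoidal plans, the Helmholtz--Hodge decomposition, and the closure result for convex hulls in $(\sP_2(T\R^d)_\mu,d_\mu)$.

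You instead prove a Danskin-type formula $D\mcG(\mu)(w)=\max_{\gamma\in\Gamma_o(\mu,\sigma)}\int\inner{2(x-y),w(x)}\,d\gamma$ for $w\in C_1(\R^d;\R^d)$, and both halves check out.
\begin{itemize}
\item The upper bound needs only the competitor $((\id+t_nw)\circ\pi^1,\pi^2)_\#\gamma_n$ and $\sP_2$-stability of optimal plans.
\item The lower bound uses the reversed perturbation $\nu_t=(\id-tw)_\#\mu$ together with $(\id+tw)\circ(\id-tw)=\id+o(t)$ in $L^2_\mu$. This is the Wasserstein counterpart of the classical fact that, for a semiconcave function, the Clarke generalized derivative in direction $w$ equals minus the one-sided derivative in direction $-w$.
\end{itemize}
The conclusion is also sound: it combines Proposition~\ref{prop:basic-properties-clarke-subdiff}.vii), density of $C_1$, and Hahn--Banach. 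It also uses boundedness of $K$, which you assert without proof; it holds because each generator satisfies $\norm{2\id-2\mcB[\gamma]}_\mu\le 2d_\mcW(\mu,\sigma)$ by Jensen. You could drop the false start in the lower-bound paragraph, and state explicitly that the $\limsup$ is taken jointly over $(\nu,t)$, so that $\nu=\nu_t$ is admissible.

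As for what each route buys: yours is elementary and self-contained, avoids the lift and all the plan-level geometry, and gives an explicit formula for the generalized derivative as a bonus. The paper's route is heavier, but it yields the identification $\partial_C\mcG(\mu)=\mcB[\boldsymbol{\partial}^S\mcG(\mu)]$ and the sharpened description of the strong Fr\'echet superdifferential as by-products. These tie the new Clarke object to the Ambrosio--Gigli--Savar\'e calculus.
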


When there is a unique optimal plan and it is induced by a map $T$ (e.g., if $\mu$ is absolutely continuous w.r.t. the Lebesgue measure or if $\sigma$ is a Dirac mass), the above formula
reduces to
\[
\partial_C d_\mcW^2(\cdot,\sigma)(\mu)
=
\{2(\id-T)\}.
\]

Thus, the Clarke subdifferential extends the familiar Wasserstein
gradient to the non-smooth regime.\\

The proof combines two geometric refinements: a closure property for convex hulls in the space of tangent plans, and a sharpened characterization of the strong Fréchet superdifferential of
$d_\mcW^2(\cdot,\sigma)$. We defer these arguments to Appendix \ref{sec: clarke subdiff of squared wass dist}.

\begin{proposition}\label{prop:basic-properties-clarke-subdiff}
    Let $\mcG: \sP_2(\R^d) \to \R$ be a locally Lipschitz function around $\mu \in \sP_2(\R^d)$.
    \begin{enumerate}[i)]
        \item If $\Lip_\mu(\mcG)$ is a Lipschitz constant of $\mcG$ in a neighborhood of $\mu$, then for every $w \in C_1(\R^d;\R^d)$ it holds that
        $$|D \mcG(\mu)(w)| \leq \Lip_\mu(\mcG) \norm{w}_\mu.$$
        
        \item For every $w_1, w_2 \in C_1(\R^d;\R^d)$,
        $$|D\mcG(\mu)(w_2) - D\mcG(\mu)(w_1)| \leq \Lip_\mu(\mcG)  \norm{w_2 - w_1}_\mu.$$
        
        \item $D \mcG(\mu)$ admits a unique continuous extension to $L^2_\mu$ which is positively homogeneous and $\Lip_\mu(\mcG)$-Lipschitz continuous.
        
        \item For any $w \in C_1(\R^d;\R^d)$, the mapping $\mu \in \sP_2(\R^d) \mapsto D \mcG(\mu)(w)$ is upper semicontinuous in the topology of $\sP_2(\R^d)$.
        
        \item $\partial_C \mcG(\mu)$ is nonempty, convex and weakly closed.
        \item If $\Lip_\mu(\mcG)$ is a Lipschitz constant of $\mcG$ in a neighborhood of $\mu$, it holds that
        $$\forall \Xi \in \partial_C \mcG(\mu), \qquad \norm{\Xi}_\mu \leq \Lip_\mu(\mcG).$$
        \item For every $\beta \in L^2_\mu$ it holds that
        $$D \mcG(\mu)(\beta) = \sigma_{\partial_C \mcG(\mu)}(\beta).$$
    \end{enumerate}

\end{proposition}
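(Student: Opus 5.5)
The plan is to mimic the classical Banach-space proof of the properties of the Clarke directional derivative, using the push-forward coupling $(\id,\id+tw)_\#\nu$ to compare measures.

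\emph{Items i)–ii).} Fix a neighborhood $B_\mcW(\mu,r)$ on which $\mcG$ is $\Lip_\mu(\mcG)$-Lipschitz. For $\nu$ close to $\mu$ and $t$ small, both $(\id+tw_i)_\#\nu$ lie in this ball, since $d_\mcW((\id+tw)_\#\nu,\nu)\le t\norm{w}_\nu\le tC(1+M_2(\nu))$. Using the coupling $(\id+tw_1,\id+tw_2)_\#\nu$ we get
\[
\big|\mcG((\id+tw_2)_\#\nu)-\mcG((\id+tw_1)_\#\nu)\big|\le \Lip_\mu(\mcG)\,t\,\norm{w_2-w_1}_\nu .
\]
The key continuity fact is that $\norm{w}_\nu\to\norm{w}_\mu$ as $\nu\to\mu$ in $\sP_2$, because $|w|^2$ is continuous with quadratic growth and $\sP_2$-convergence is equivalent to narrow convergence plus convergence of second moments. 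Dividing by $t$ and taking the limsup gives ii). Item i) is the special case $w_1=0$, for which $D\mcG(\mu)(0)=0$.

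\emph{Item iii).} $C_1(\R^d;\R^d)$ is dense in $L^2_\mu$, since $C_c$ is dense and $\mu$ is Radon. The functional is Lipschitz for $\norm{\cdot}_\mu$, so it extends uniquely. Positive homogeneity on $C_1$ follows from the substitution $t\mapsto\lambda t$, and it passes to the extension by continuity.

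\emph{Item iv).} Use a diagonal argument. Take $\mu_n\to\mu$ and, for each $n$, choose $(\nu_n,t_n)$ with $d_\mcW(\nu_n,\mu_n)<1/n$, $t_n<1/n$, and difference quotient at least $D\mcG(\mu_n)(w)-1/n$. Then $\nu_n\to\mu$ and $t_n\searrow0$, so $\limsup_n D\mcG(\mu_n)(w)\le D\mcG(\mu)(w)$. One must check that $\mcG$ remains uniformly Lipschitz near $\mu_n$, so that $D\mcG(\mu_n)$ is finite; this holds because $\mu_n$ eventually lies in the fixed ball.

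\emph{Items v)–vii).} First show that $D\mcG(\mu)$ is subadditive on $C_1$. Write
\[
\mcG((\id+t(w_1+w_2))_\#\nu)-\mcG(\nu)=\big[\mcG((\id+tw_1)_\#\nu')-\mcG(\nu')\big]+\big[\mcG((\id+tw_2)_\#\nu)-\mcG(\nu)\big],
\]
with $\nu':=(\id+tw_2)_\#\nu$. The identity holds because $(\id+tw_1)\circ(\id+tw_2)$ differs from $\id+t(w_1+w_2)$ by $t\big(w_1(x+tw_2(x))-w_1(x)\big)$. By the Lipschitz bound, this error contributes $t\,\Lip_\mu(\mcG)\norm{w_1\circ(\id+tw_2)-w_1}_\nu$ to the quotient. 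That norm tends to $0$ as $t\to0$ and $\nu\to\mu$, by continuity of $w_1$, linear growth, uniform integrability of second moments, and dominated convergence. Since $\nu'\to\mu$, the limsup splits and subadditivity follows.

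By density and continuity, $D\mcG(\mu)$ is then a continuous sublinear functional on $L^2_\mu$. Hahn–Banach provides a linear minorant $\ell\le D\mcG(\mu)$, and by Riesz $\ell=\langle\Xi,\cdot\rangle_\mu$, so the set is nonempty. Convexity and weak closedness are immediate, since the set is an intersection of closed half-spaces. Item vi) follows from $\langle\Xi,\beta\rangle\le\Lip_\mu(\mcG)\norm{\beta}_\mu$ applied with $\beta=\Xi$, after extending the defining inequality to all of $L^2_\mu$ by density. Item vii) is the standard fact that a continuous sublinear functional equals the support function of the set of its linear minorants, again via Hahn–Banach, exactly as in Clarke's Banach-space theory.

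The main obstacle is the subadditivity step: controlling the composition error uniformly in $\nu$ near $\mu$ when $w_1$ is only continuous with linear growth and $\mu$ need not have compact support.
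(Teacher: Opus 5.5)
Your proposal is correct and follows essentially the same route as the paper. The paper obtains i), ii) and iv) by adapting Clarke's Banach-space argument (Proposition~10.2 of Clarke's book), iii) by density of $C_1(\R^d;\R^d)$ in $L^2_\mu$, and v)--vii) via Hahn--Banach and the representation of a continuous sublinear functional as a support function. Your version spells out two steps the paper leaves implicit. The first is the subadditivity of $D\mcG(\mu)$: you correctly control the composition error $\norm{w_1\circ(\id+tw_2)-w_1}_\nu$ using local uniform continuity of $w_1$ and uniform integrability of second moments (strictly speaking this is a Vitali-type argument rather than dominated convergence). The second is the extension of the defining inequality of $\partial_C\mcG(\mu)$ from $C_1$ to all of $L^2_\mu$, which you need before testing with $\beta=\Xi$ in vi).
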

    
\begin{proof}
    Statements i), ii) and iv) follow by a direct adaptation of \cite[Proposition 10.2]{clarke2013functional}, while iii) is a consequence of ii) and the density of $C_1(\R^d;\R^d)$ over $L^2_\mu$.\\

    To prove v), an application of Hahn-Banach theorem shows that the set $\partial_C \mcG(\mu)$ is nonempty. Closedness and convexity are direct consequences of the fact that
        \[ \partial_C  \mcG(\mu)
        =
        \bigcap_{v\in L^2_\mu}
        \left\{
        \Xi\in L^2_\mu:
        \langle \Xi,v\rangle_\mu\le D \mcG(\mu)(v)
        \right\}.
        \]

     To prove vi), let $\Xi \in \partial_C \mcG(\mu)$. Using the definition of Clarke subdifferential together with i),
        \[ \|\Xi\|^2_\mu =  \langle \Xi,\Xi\rangle_\mu \le D\mcG(\mu)(\Xi) \le |D \mcG (\mu)(\Xi)|
        \le \Lip_\mu(\mcG) \|\Xi\|_\mu. \]

    Finally, vii) follows directly from the fact that a positively homogeneous sublinear and continuous function can be represented as the support function of its convex subdifferential at $0$, see \cite[Theorem 4.25]{clarke2013functional}.
    
\end{proof}

\begin{proposition}\label{prop:closed-graph-clarke-det}
    Let \( \mcG :\P_2(\R^d)\to\R\) be locally Lipschitz around
    \(\mu\in\P_2(\R^d)\). Let $\mu_n \to \mu$, $\Xi_n \in \partial_C \mcG(\mu_n)$, \(\gamma_n\in\Gamma(\mu,\mu_n)\) be such that
    $$\norm{\pi^2 - \pi^1}_{\gamma_n}\to 0$$
    and let $\Xi \in L^2_\mu$ be a limit point of the sequence $\{\PP_{\gamma_n} \Xi_n\}_n$ in the weak topology of $L^2_\mu$. Then, $\Xi \in \partial_C \mcG(\mu)$.
\end{proposition}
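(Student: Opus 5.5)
The plan is to verify the defining inequality of $\partial_C\mcG(\mu)$ directly. Fix $w\in C_1(\R^d;\R^d)$. We want to show $\langle \Xi,w\rangle_\mu\le D\mcG(\mu)(w)$. Pass to a subsequence along which $\PP_{\gamma_n}\Xi_n\rightharpoonup\Xi$ weakly in $L^2_\mu$. Since $w\in L^2_\mu$ (linear growth and $\mu\in\sP_2(\R^d)$), this gives $\langle \Xi,w\rangle_\mu=\lim_n\langle \PP_{\gamma_n}\Xi_n,w\rangle_\mu$.

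\emph{Rewriting the pullback pairing.} Disintegrate $\gamma_n(dx,dy)=\mu(dx)(\gamma_n)_x(dy)$ and use Definition \ref{def:pullback-coupling}. Then
\[
\langle \PP_{\gamma_n}\Xi_n,w\rangle_\mu=\int\langle \Xi_n(y),w(x)\rangle\,d\gamma_n(x,y)
=\langle \Xi_n,w\rangle_{\mu_n}+\int\langle \Xi_n(y),w(x)-w(y)\rangle\,d\gamma_n(x,y).
\]
For the first term, $\Xi_n\in\partial_C\mcG(\mu_n)$ gives $\langle\Xi_n,w\rangle_{\mu_n}\le D\mcG(\mu_n)(w)$. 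By upper semicontinuity (Proposition \ref{prop:basic-properties-clarke-subdiff} iv)), $\limsup_n D\mcG(\mu_n)(w)\le D\mcG(\mu)(w)$.

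\emph{The error term.} This is the main point. Let $\Lip_\mu(\mcG)$ be a Lipschitz constant of $\mcG$ on a ball $B_\mcW(\mu,r)$. For $n$ large, $\mu_n$ lies in a smaller ball $B_\mcW(\mu,r/2)$, so $\mcG$ is $\Lip_\mu(\mcG)$-Lipschitz near $\mu_n$. Proposition \ref{prop:basic-properties-clarke-subdiff} vi) then gives the uniform bound $\|\Xi_n\|_{\mu_n}\le \Lip_\mu(\mcG)$. By Cauchy--Schwarz, the error term is bounded by
\[
\Lip_\mu(\mcG)\Big(\int|w(x)-w(y)|^2\,d\gamma_n(x,y)\Big)^{1/2}.
\]
It remains to show that this integral tends to $0$. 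The coupling $(\pi^1,\pi^1,\pi^1,\pi^2)_\#\gamma_n$ between $(\id,\id)_\#\mu$ and $\gamma_n$ on $\R^{2d}$ has cost $\|\pi^2-\pi^1\|_{\gamma_n}^2$. Hence $\gamma_n\to(\id,\id)_\#\mu$ in $\sP_2(\R^{2d})$. The integrand $(x,y)\mapsto|w(x)-w(y)|^2$ is continuous and bounded by $C(1+|x|^2+|y|^2)$. Convergence in $\sP_2(\R^{2d})$ implies convergence of integrals of continuous functions with at most quadratic growth. The limit is $\int|w(x)-w(x)|^2d\mu=0$.

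\emph{Conclusion.} Combining the two terms gives $\langle\Xi,w\rangle_\mu\le\limsup_n D\mcG(\mu_n)(w)\le D\mcG(\mu)(w)$ for every $w\in C_1(\R^d;\R^d)$, that is, $\Xi\in\partial_C\mcG(\mu)$. The step requiring most care is the error term. It needs both the uniform-in-$n$ bound on $\|\Xi_n\|_{\mu_n}$, obtained by fixing one Lipschitz neighbourhood of $\mu$ that contains neighbourhoods of all $\mu_n$ for large $n$, and the convergence of the growth-controlled integrand along $\gamma_n$, obtained from the $\sP_2(\R^{2d})$ convergence of $\gamma_n$ to the diagonal plan.
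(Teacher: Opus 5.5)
Your proof is correct and follows the paper's argument. You rewrite $\langle \PP_{\gamma_n}\Xi_n, w\rangle_\mu$ as $\langle \Xi_n, w\rangle_{\mu_n}$ plus an error controlled by the uniform bound $\norm{\Xi_n}_{\mu_n}\le \Lip_\mu(\mcG)$, and conclude with upper semicontinuity of $D\mcG(\cdot)(w)$. The one difference is the test class: the paper tests against Lipschitz $\psi\in C_c^\infty(\R^d;\R^d)$, bounding the error by $\Lip_\mu(\mcG)\Lip(\psi)\norm{\pi^2-\pi^1}_{\gamma_n}$ and leaving the extension to all of $C_1(\R^d;\R^d)$ to density and continuity of $D\mcG(\mu)$ on $L^2_\mu$; your convergence $\gamma_n\to(\id,\id)_\#\mu$ in $\sP_2(\R^{2d})$ handles every $w\in C_1(\R^d;\R^d)$ directly and makes that step unnecessary.
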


\begin{proof}

    Let $\Lip_\mu(\mcG)$ be a Lipschitz constant of $\mcG$ in a neighborhood of $\mu \in \sP_2(\R^d)$. By Proposition~\ref{prop:basic-properties-clarke-subdiff},
    $$\|\Xi_n\|_{\mu_n} \le \Lip_\mu(\mcG)$$
    for $n \in \N$ sufficiently large. Let \( \psi\in C_c^\infty(\R^d;\R^d)\). Up to relabeling we write $\PP_{\gamma_n} \Xi_n \rightharpoonup \Xi$. Using the definition of the pullback mapping we have
    \[
    \begin{aligned}
    \left|
    \langle \PP_{\gamma_n}\Xi_n, \psi \rangle_\mu
    -
    \langle \Xi_n, \psi \rangle_{\mu_n}
    \right|
    &=
    \left|
    \int_{\R^{2d}}
    \langle \Xi_n(y),\psi (x)- \psi(y)\rangle
    \,d\gamma_n(x,y)
    \right|  \\
    &\le
    \|\Xi_n\|_{\mu_n}
    \left(
    \int_{\R^{2d}}| \psi(x)- \psi(y)|^2\,d\gamma_n(x,y)
    \right)^{1/2} \\
    &\le
    \Lip_\mu(\mcG) \,\Lip( \psi ) \, \norm{\pi^2 - \pi^1}_{\gamma_n}
    \longrightarrow0.
    \end{aligned}
    \]
    Since \(\Xi_n\in\partial_C \mcG(\mu_n)\), Proposition \ref{prop:basic-properties-clarke-subdiff} and the weak convergence $\PP_{\gamma_n} \Xi_n \rightharpoonup \Xi$ yield
    $$ \inner{\Xi, \psi}_\mu = \lim_{n \to + \infty}
    \langle \Xi_n, \psi \rangle_{\mu_n}
    \le \limsup_n D \mcG (\mu_n)( \psi) \leq D\mcG(\mu)( \psi ),$$
    which proves the statement.
\end{proof}

\subsection{Characterization through the Lions lift}

The preceding subsection introduced the Clarke subdifferential
intrinsically on $\sP_2(\R^d)$. We now show that it admits an equivalent characterization through the classical Clarke subdifferential of the lifted functional on $L^2_\mbP$.\\

We stress that the intrinsic formulation is particularly suitable for proving
properties directly on Wasserstein space, such as the closed-graph property in Proposition \ref{prop:closed-graph-clarke-det}. By contrast, the lifted formulation is more convenient for the regularization arguments developed in the next section.\\

For a given $U: L^2_\mbP \to \R$ locally Lipschitz, the generalized derivative $DU(X)(V)$ of $U$ at $X \in L^2_\mbP$ in the direction $V \in L^2_\mbP$ is defined as
$$DU(X)(V) := \limsup_{\substack{t \searrow 0\\ Y \to X}} \frac{U(Y + tV) - U(Y)}{t}$$

The Clarke subdifferential $\partial_CU(X)$ of $U$ at $X$ is the set
$$\partial_C U(X) := \left\{Z \in L^2_\mbP : \forall V \in L^2_\mbP, \quad \inner{Z,V}_\mbP \leq DU(X)(V) \right \}.$$

By \cite[Definition 10.3]{clarke2013functional} it holds that
\begin{align}\label{eq: generalized derivative and support function classical case}
    \forall V \in L^2_\mbP, \qquad DU(X)(V) = \sigma_{\partial_CU(X)}(V).
\end{align}

Given $X\in L^2_\mbP$, its barycentric projection $\Bar_X[\cdot]$ induced by $X$ is the mapping
$$Z \in L^2_\mbP \mapsto \Bar_X[Z] := \mcB[ (X,Z)_\#\mbP] \in L^2_{X_\# \mbP}$$
where $\mcB$ is the barycentric projection \eqref{eq: definition barycentric projection}. It can be proven that $\Bar_X[Z]$ is characterized by the following equality
$$\inner{\Bar_X[Z], \psi}_{L^2_\mu} = \inner{Z, \psi\circ X}_\mbP \qquad \forall \psi \in C_c^\infty(\R^d;\R^d).$$
Consequently, the mapping $\Bar_X[\cdot] : L^2_\mbP \to L^2_{X_\# \mbP}$ is linear and continuous.

\begin{proposition}[Characterization of Clarke subdifferential through the lift]\label{prop: clarke subdifferential and lifting}
    Let $\mcG: \sP_2(\R^d) \to \R$ be locally Lipschitz around $\mu$ and let $\hat{\mcG}: L^2_\mbP \to \R$ be its lift. Let $X \in L^2_\mbP$ be such that $X_\#\mbP = \mu$. Then,
    \begin{align}\label{eq: equality generalized derivative}
        \forall \beta \in L^2_\mu, \qquad D \hat{\mcG}(X)(\beta \circ X) =   D \mcG(\mu)(\beta)
    \end{align}
    Moreover,
    $$\partial_C \mcG(\mu) = \Bar_X[\partial_C \hat{\mcG}(X)].$$
\end{proposition}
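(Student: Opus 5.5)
We first prove the equality \eqref{eq: equality generalized derivative} of generalized derivatives, and then deduce the identity between the subdifferentials by duality and support functions.

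\textbf{Step 1: the inequality $D\mcG(\mu)(w)\le D\hat{\mcG}(X)(w\circ X)$ for $w\in C_1(\R^d;\R^d)$.} Take $\nu_n\to\mu$ and $t_n\searrow 0$ realizing the limsup defining $D\mcG(\mu)(w)$. Since $(\Omega,\mbP)$ is non-atomic and Polish, we can choose $Y_n\sim\nu_n$ with $\norm{Y_n-X}_\mbP\to 0$. This is the standard fact that, for a fixed $X\sim\mu$, optimal couplings can be realized by random variables $Y_n$ with $\norm{Y_n-X}_\mbP$ arbitrarily close to $d_\mcW(\mu,\nu_n)$, using non-atomicity. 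Then $(\id+t_nw)_\#\nu_n=(Y_n+t_n w\circ Y_n)_\#\mbP$, so
$$\frac{\mcG((\id+t_nw)_\#\nu_n)-\mcG(\nu_n)}{t_n}=\frac{\hat{\mcG}(Y_n+t_nw\circ Y_n)-\hat{\mcG}(Y_n)}{t_n}.$$
Replacing $w\circ Y_n$ by $w\circ X$ costs at most $\Lip\cdot\norm{w\circ Y_n-w\circ X}_\mbP$, by the local Lipschitz property of $\hat{\mcG}$. This term tends to $0$: $w$ is continuous with linear growth and $Y_n\to X$ in $L^2_\mbP$, so the convergence follows along subsequences by dominated convergence (Vitali). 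Hence $D\mcG(\mu)(w)\le D\hat{\mcG}(X)(w\circ X)$.

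\textbf{Step 2: the converse inequality.} Take $Y_n\to X$ and $t_n\searrow 0$ realizing $D\hat{\mcG}(X)(w\circ X)$. As before, replace $w\circ X$ by $w\circ Y_n$ at vanishing cost. Then set $\nu_n=(Y_n)_\#\mbP\to\mu$; law invariance turns the quotient into the Wasserstein quotient, so its limsup is bounded by $D\mcG(\mu)(w)$. This gives the equality on $C_1(\R^d;\R^d)$. Both sides are Lipschitz in $\beta$ for the $L^2_\mu$ norm, since $\norm{\beta\circ X}_\mbP=\norm{\beta}_\mu$ and by Proposition \ref{prop:basic-properties-clarke-subdiff}iii). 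Density of $C_1(\R^d;\R^d)$ in $L^2_\mu$ then extends the equality to all $\beta\in L^2_\mu$.

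\textbf{Step 3: the subdifferential identity.} For the inclusion $\supseteq$, let $Z\in\partial_C\hat{\mcG}(X)$. By the characterizing identity of $\Bar_X$ (extended to $\beta\in L^2_\mu$ by density),
$$\inner{\Bar_X[Z],\beta}_\mu=\inner{Z,\beta\circ X}_\mbP\le D\hat{\mcG}(X)(\beta\circ X)=D\mcG(\mu)(\beta),$$
so $\Bar_X[Z]\in\partial_C\mcG(\mu)$.

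For the inclusion $\subseteq$, note that $K:=\Bar_X[\partial_C\hat{\mcG}(X)]$ is convex. It is also weakly compact, being the image of a weakly compact set under a continuous linear map, and hence closed. Suppose some $\Xi\in\partial_C\mcG(\mu)$ lies outside $K$. Hahn–Banach separation in $L^2_\mu$ gives $\beta$ with
$$\inner{\Xi,\beta}_\mu>\sigma_K(\beta)=\sup_{Z\in\partial_C\hat{\mcG}(X)}\inner{Z,\beta\circ X}_\mbP=D\hat{\mcG}(X)(\beta\circ X),$$
where the last equality is \eqref{eq: generalized derivative and support function classical case}. The right-hand side equals $D\mcG(\mu)(\beta)$, which contradicts $\Xi\in\partial_C\mcG(\mu)$ by Proposition \ref{prop:basic-properties-clarke-subdiff}vii).

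\textbf{Main obstacle.} The delicate point is the coupling step in Step 1: producing $Y_n\sim\nu_n$ converging to the fixed $X$ in $L^2_\mbP$. This requires non-atomicity and a measurable-selection or approximation argument. One approach is to approximate $X$ by simple random variables, on whose level sets optimal plans can be realized. Alternatively, invoke the known result (e.g. \cite{carmonaProbabilisticTheoryMean2018}) that $\inf\{\norm{Y-X}_\mbP: Y\sim\nu\}=d_\mcW(\mu,\nu)$.
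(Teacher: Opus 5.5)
Your proposal is correct and follows essentially the same route as the paper. It proves the two limsup inequalities by swapping $w\circ X$ and $w\circ Y$ at vanishing Lipschitz cost, with the coupling $Y_n\sim\nu_n$, $Y_n\to X$ supplied in the paper by \cite[Lemma 2.1]{bertucciApproximationSquaredWasserstein2024}; it then extends by density and concludes with a support-function/separation argument for the two closed convex sets. The differences are only cosmetic: the paper tests with $\psi\in C_c^\infty$, so the swap is a pure Lipschitz estimate, whereas you work on all of $C_1(\R^d;\R^d)$ via Vitali, and you prove the inclusion $\supseteq$ directly instead of through equality of support functions.
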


\begin{proof}
    Let $X \in L^2_\mbP$ with $X_\# \mbP = \mu$ and let $\Lip_X(\hat{\mathcal{G}}) > 0$ be a Lipschitz constant of $\hat{\mcG}$ around $X$. For every $\beta_1, \beta_2 \in L^2_\mu$ it holds
        \begin{align*}
            |D \hat{\mcG}(X)(\beta_2 \circ X) - D \hat{\mcG}(X)(\beta_1 \circ X)|&\leq \Lip_X(\hat{\mathcal{G}})| \beta_2\circ X - \beta_1 \circ X|_{L^2_\mbP}\\
            &= \Lip_X(\hat{\mathcal{G}}) \norm{ \beta_2 - \beta_1}_\mu.
        \end{align*}
        Hence, it suffices to prove \eqref{eq: equality generalized derivative} for functions $\psi \in C_c^\infty(\R^d;\R^d)$. We compute that
        \begin{align*}
           \limsup_{\substack{Y \to X\\ t \searrow 0 }} \frac{| \hat{\mcG}( Y + t \psi \circ X) -  \hat{\mcG}(Y + t \psi \circ Y) |}{t} &\leq \Lip(\hat{\mcG}) \limsup_{Y \to X} \norm{\psi \circ X - \psi \circ Y}_\mbP\\
           &\leq \Lip(\hat{\mcG}) \Lip(\psi) \limsup_{Y \to X} \norm{Y- X}_\mbP\\
           &= 0.
        \end{align*}
        Consequently,
        \begin{align*}
            D \hat{\mcG}(X)(\psi\circ X) = \limsup_{\substack{Y \to X\\ t \searrow 0 }} \frac{\hat{\mcG}(Y + t\psi \circ Y) - \hat{\mcG}(Y) }{t} &= \limsup_{\substack{Y \to X\\ t \searrow 0 }} \frac{\mcG ( (\id + t\psi)_\# (Y_\# \mbP)) - \mcG(Y_\# \mbP)}{t}\\
            &\leq D\mcG(\mu)(\psi).
        \end{align*}
        To prove the equality, choose $\nu_n \to \mu$ and $t_n \searrow 0$ such that the limsup in the definition of $D\mcG(\mu)(v)$ is a limit. By \cite[Lemma 2.1]{bertucciApproximationSquaredWasserstein2024}, there are random variables $Y_n \to X$ such that $(Y_n)_\# \mbP = \nu_n$. Therefore,
        \begin{align*}
            D\mcG(\mu)(\psi) = \lim_n \frac{\mcG ( (\id + t_n \psi)_\#\nu_n) - \mcG(\nu_n)}{t_n} &\leq \limsup_{\substack{Y \to X\\ t \searrow 0 }}  \frac{\mcG ( (\id + t\psi)_\# (Y_\# \mbP)) - \mcG(Y_\# \mbP)}{t}\\
            &= D \hat{\mcG}(X)(\psi\circ X).
        \end{align*}
        To prove the last part of the statement, since $\partial_C \hat{\mcG}(X)$ is convex, closed and bounded and $\Bar_X[\cdot]$ linear and bounded, the set $\Bar_X[\partial_C \hat{\mcG}(X)]$ is convex and closed. By Proposition \ref{prop:basic-properties-clarke-subdiff} the same holds for $\partial_C \mcG(\mu)$, and in particular it suffices to prove that their support functions agree. Let $\beta \in L^2_\mu$. By the first part of the statement and \eqref{eq: generalized derivative and support function classical case} we have that
        \begin{align*}
            \sigma_{\Bar_X[\partial_C \hat{\mcG}(X)]}(\beta) = \sup_{ \Xi \in \partial_C \hat{\mcG}(X)} \inner{\Bar_X[\Xi], \beta}_\mu = \sup_{ \Xi \in \partial_C \hat{\mcG}(X)} \inner{\Xi, \beta\circ X}_\mbP &= D \hat{\mcG}(X)(\beta \circ X)\\
            &= D \mcG(\mu)(\beta)\\
            &= \sigma_{\partial_C \mcG(\mu)}(\beta).
        \end{align*}
\end{proof}

\begin{corollary}[Clarke subdifferential of a $C^{1,1}$ mapping]\label{coro: clarke subdifferential of C11 mappings}
    Let $\mcG: \sP_2(\R^d) \to \R$ be a locally Lipschitz mapping. If $\mcG$ is of class $C^{1,1}$, it holds that
    $$\partial_C \mcG(\mu) = \{ \nabla_\mu \mcG(\mu)\}.$$
\end{corollary}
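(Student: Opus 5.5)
The plan is to pass through the Lions lift and use Proposition \ref{prop: clarke subdifferential and lifting}. Fix $X\in L^2_\mbP$ with $X_\#\mbP=\mu$. By Definition \ref{def: C11 on Wass}, the lift $\hat{\mcG}$ is of class $C^{1,1}$ on $L^2_\mbP$, and in particular continuously (Fréchet) differentiable near $X$. We then invoke the classical Banach-space fact \cite{clarke2013functional} that a $C^1$ function has Clarke subdifferential equal to the singleton of its gradient. Concretely, for every $V$ we have
$$D\hat{\mcG}(X)(V)=\limsup_{Y\to X,\,t\searrow0}\frac{\hat{\mcG}(Y+tV)-\hat{\mcG}(Y)}{t}=\inner{\nabla\hat{\mcG}(X),V}_\mbP.$$
This follows from the mean value theorem, $\hat{\mcG}(Y+tV)-\hat{\mcG}(Y)=t\inner{\nabla\hat{\mcG}(Y+stV),V}_\mbP$, combined with the Lipschitz continuity of $\nabla\hat{\mcG}$. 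Since $D\hat{\mcG}(X)$ is linear, its support set is the singleton $\partial_C\hat{\mcG}(X)=\{\nabla\hat{\mcG}(X)\}$.

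Next, Theorem \ref{thm:gangbo-tudorascu} gives $\nabla\hat{\mcG}(X)=\nabla_\mu\mcG(\mu)\circ X$. Proposition \ref{prop: clarke subdifferential and lifting} then yields
$$\partial_C\mcG(\mu)=\Bar_X\big[\{\nabla_\mu\mcG(\mu)\circ X\}\big].$$
It remains to compute $\Bar_X[\beta\circ X]$ for $\beta=\nabla_\mu\mcG(\mu)\in L^2_\mu$. Using the characterization of the barycentric projection, for every $\psi\in C_c^\infty$ we get
$$\inner{\Bar_X[\beta\circ X],\psi}_\mu=\inner{\beta\circ X,\psi\circ X}_\mbP=\inner{\beta,\psi}_\mu.$$
Since $C_c^\infty$ is dense in $L^2_\mu$, this gives $\Bar_X[\beta\circ X]=\beta$. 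Equivalently, $(X,\beta\circ X)_\#\mbP=\iota_\mu(\beta)$ and $\mcB\circ\iota_\mu=\id$.

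The only delicate point is making sure the Gangbo--Tudorascu identification is used for the same representative of $\nabla_\mu\mcG(\mu)$. The gradient is unique only in $\Tanreg_\mu$, but $\nabla\hat{\mcG}(X)$ is a genuine element of $L^2_\mbP$, so we take $\nabla_\mu\mcG(\mu)$ to be the function with $\nabla\hat{\mcG}(X)=\nabla_\mu\mcG(\mu)\circ X$. As a consistency check, the defining inequality of $\partial_C\mcG(\mu)$ tests against all of $C_1$, hence against all of $L^2_\mu$, so the singleton is a well-defined element of $L^2_\mu$ rather than merely a class in the tangent space. The mean-value step only needs local Lipschitz continuity of the gradient, which $C^{1,1}$ provides, so no serious obstacle is expected.
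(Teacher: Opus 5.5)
Your proposal is correct and follows the paper's proof. You lift to $L^2_\mbP$, identify $\partial_C\hat{\mcG}(X)=\{\nabla_\mu\mcG(\mu)\circ X\}$, and apply Proposition~\ref{prop: clarke subdifferential and lifting}; where the paper cites \cite[Theorem 10.8]{clarke2013functional} you give the mean-value argument, and your explicit check $\Bar_X[\beta\circ X]=\beta$ fills in a step the paper leaves implicit. Your concern about the representative is unnecessary: testing the expansion in Definition~\ref{def: differentiability on Wasserstein} with $\gamma=(\id,\id+tw)_\#\mu$ for arbitrary $w\in L^2_\mu$ already shows the gradient is unique in all of $L^2_\mu$, not only in the tangent space.
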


\begin{proof}
    If $\mcG$ is of class $C^{1,1}$ on $\sP_2(\R^d)$, its lifting $\hat{\mcG}$ is of class $C^{1,1}$ in $L^2_\mbP$. Fix $X \sim \mu$. Using \cite[Theorem 10.8]{clarke2013functional} and Theorem \ref{thm:gangbo-tudorascu} we have $\partial_C \hat{\mcG}(X) = \{ \nabla_\mu \mcG(\mu) \circ X\}$. Hence, by Proposition \ref{prop: clarke subdifferential and lifting},
    $$\partial_C \mcG(\mu) = \Bar_X[\partial_C \hat{\mcG}(X)] = \{\nabla_\mu \mcG(\mu)\}.$$
\end{proof}

\subsection{Clarke subdifferential and convexity along plans}

The Clarke subdifferential on $\sP_2(\R^d)$ is defined through deterministic perturbations of the form $(\id+t\psi)_\#\mu$. At first sight, this suggests that it can only detect variations of $\mu$ that are induced by maps. We show that, under convexity along plans, this apparent limitation can be overcome. In fact, the Clarke subdifferential of a convex along plans mapping is characterized by a global supporting inequality against arbitrary transport plans; see Theorem \ref{thm: clarke subdiff and convex mappings}.\\

Let us start with the following Young-type inequality. It shows
that, for functionals that are convex along plans, replacing a transport plan by its barycentric projection cannot increase the value of the functional.

\begin{lemma}[Barycentric Fenchel-Young inequality]\label{lemma: barycenters and convexity by plans}
    Let $\mcG: \sP_2(\R^d) \to \R \cup \{+ \infty\}$ be proper, lower semicontinuous and convex along plans. Then, for every $\mu, \nu \in \sP_2(\R^d)$ and $\gamma \in \Gamma(\mu,\nu)$ it holds that
    \begin{align}\label{ineq: barycenters and convexity by plans}
        \mcG(\mcB[\gamma]_\# \mu) \leq \mcG(\pi^2_\# \gamma) =\mcG(\nu).
    \end{align}
\end{lemma}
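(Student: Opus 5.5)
The idea is to realize $\mcB[\gamma]_\#\mu$ as a limit of \emph{averages along plans} of $\nu$ with itself, and then use convexity along plans together with lower semicontinuity. Disintegrate $\gamma(dx,dy)=\mu(dx)\gamma_x(dy)$, so that $\mcB[\gamma](x)=\int y\,\gamma_x(dy)$. For $N\in\N$, consider the multi-marginal measure
$$\Gamma_N(dx,dy_1,\dots,dy_N) := \mu(dx)\,\gamma_x(dy_1)\otimes\cdots\otimes\gamma_x(dy_N),$$
that is, $N$ conditionally independent copies of the second coordinate given $x$. Each $(x,y_i)$-marginal is $\gamma$, so each $y_i$-marginal is $\nu$. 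Set $\nu_N := \big(\tfrac1N\sum_{i=1}^N y_i\big)_\#\Gamma_N$.

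The first step is to show $\mcG(\nu_N)\le\mcG(\nu)$ by induction on $N$. One writes
$$\frac1{N}\sum_{i=1}^{N} y_i=\Big(1-\frac1{N}\Big)\frac1{N-1}\sum_{i=1}^{N-1} y_i+\frac1{N} y_{N}.$$
The law of the pair $\big(\frac1{N-1}\sum_{i<N}y_i,\ y_N\big)$ under $\Gamma_N$ is a plan between $\nu_{N-1}$ and $\nu$. Convexity along plans with $t=1/N$ then gives
$$\mcG(\nu_N)\le(1-\tfrac1N)\mcG(\nu_{N-1})+\tfrac1N\mcG(\nu)\le\mcG(\nu).$$
If $\mcG(\nu)=+\infty$ there is nothing to prove, so one may assume finiteness; the induction also propagates finiteness of $\mcG(\nu_N)$. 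The same step covers the dyadic route if preferred.

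The second step is a law of large numbers in $L^2$: show $\nu_N\to\mcB[\gamma]_\#\mu$ in $d_\mcW$. The coupling $\big(\mcB[\gamma](x),\frac1N\sum y_i\big)_\#\Gamma_N$ gives
$$d_\mcW^2\big(\mcB[\gamma]_\#\mu,\nu_N\big)\le\int\Big|\frac1N\sum_{i=1}^N\big(y_i-\mcB[\gamma](x)\big)\Big|^2d\Gamma_N=\frac1N\int\!\!\int|y-\mcB[\gamma](x)|^2\gamma_x(dy)\,\mu(dx)\le\frac{1}{N}\,M_2(\nu)^2\cdot 4.$$
Here the cross terms vanish by conditional independence, since each $y_i-\mcB[\gamma](x)$ has zero $\gamma_x$-mean, and the last bound follows from Jensen. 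Thus the distance is $O(N^{-1/2})$.

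Finally, lower semicontinuity of $\mcG$ in $\sP_2(\R^d)$ gives
$$\mcG(\mcB[\gamma]_\#\mu)\le\liminf_N\mcG(\nu_N)\le\mcG(\nu).$$
The main point needing care is the first step: one must check that the pair in the induction is indeed a coupling of $\nu_{N-1}$ and $\nu$. One must also check that the convexity inequality is applied with the correct weights, so that the resulting bound is exactly $\mcG(\nu)$ rather than a weaker one.
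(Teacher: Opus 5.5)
Your proof is correct, and it takes a genuinely different route from the paper's.

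The paper argues in the Lions lift. It first assumes $\mcG$ is differentiable and takes $(X,Y)\sim\gamma$. It then writes the Fenchel--Young equality for the convex lift $\hat{\mcG}$ at $\mcB[\gamma]\circ X$. Next it replaces $\mcB[\gamma]\circ X$ by $Y$ in the pairing with the $\sigma(X)$-measurable gradient $\nabla_\mu\mcG(\mcB[\gamma]_\#\mu)\circ(\mcB[\gamma]\circ X)$, which amounts to using $\mcB[\gamma]\circ X=\mathbb{E}[Y\mid X]$. It concludes with the Fenchel--Young inequality. The general case is recovered from Moreau--Yosida envelopes of $\hat{\mcG}$. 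These must be checked to be law-invariant, differentiable in the Wasserstein sense (via the Gangbo--Tudorascu theorem), and pointwise convergent to $\hat{\mcG}$.

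You instead stay intrinsically on $\sP_2(\R^d)$. Conditionally independent copies realize the barycentric push-forward as a limit of empirical averages. The induction uses convexity only along explicit, generally non-deterministic couplings of $\nu_{N-1}$ and $\nu$. The conditional $L^2$ law of large numbers then gives $d_\mcW(\nu_N,\mcB[\gamma]_\#\mu)=O(N^{-1/2})$. In fact the variance term equals $\int|y|^2\,d\nu(y)-\int|\mcB[\gamma](x)|^2\,d\mu(x)\le M_2(\nu)^2$, so your factor $4$ is harmless but unnecessary.

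Your route buys economy. It needs no lift, conjugates, gradients or regularization, and no law-invariance check. The value $+\infty$ is handled trivially, and lower semicontinuity is invoked once along an explicit sequence. It also shows transparently that what is used is convexity along arbitrary plans, not merely along maps or geodesics.

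The paper's route is shorter given the Hilbert-space apparatus it already builds for the Lasry--Lions regularization. It also presents the inequality as a Jensen-type monotonicity of law-invariant convex functionals under conditional expectation. That fits the lifted description of $\partial_C$ used elsewhere in the paper.
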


\begin{proof}
    Suppose first that $\mcG$ is differentiable, and fix $(X,Y) \sim \gamma$. By Theorem \ref{thm:gangbo-tudorascu},
    $$\nabla \hat{\mcG}( \mcB[\gamma] \circ X) = \nabla_\mu \mcG( \mcB[\gamma]_\# \mu) \circ ( \mcB[\gamma] \circ X).$$
    Let $\hat{\mcG}^*$ be the Fenchel conjugate of the convex mapping $\hat{\mcG}$. Fenchel-Young inequality yields
    $$\mcG(\mcB[\gamma]_\# \mu) = \hat{\mcG}( \mcB[\gamma] \circ X) = \inner{\nabla_\mu \mcG( \mcB[\gamma]_\# \mu) \circ ( \mcB[\gamma] \circ X), \mcB[\gamma] \circ X }_\mbP - \hat{\mcG}^*(\nabla \mcG( \mcB[\gamma]_\# \mu) \circ ( \mcB[\gamma] \circ X)).$$
    Again, by Fenchel-Young inequality we deduce that
    \begin{align*}
        \inner{\nabla \mcG( \mcB[\gamma]_\# \mu) \circ ( \mcB[\gamma] \circ X), \mcB[\gamma] \circ X }_\mbP &= \int \inner{\nabla_\mu \mcG( \mcB[\gamma]_\# \mu)(\mcB[\gamma](x)), \mcB[\gamma](x) } d \mu(x)\\
        &= \int \inner{\nabla_\mu \mcG( \mcB[\gamma]_\# \mu)(\mcB[\gamma](x)), y} d \gamma(x,y)\\
        &= \inner{ \nabla_\mu \mcG( \mcB[\gamma]_\# \mu) \circ (\mcB[\gamma] \circ X), Y}_\mbP\\
        &\leq \hat{\mcG}^*\left( \nabla_\mu \mcG( \mcB[\gamma]_\# \mu) \circ (\mcB[\gamma] \circ  X ) \right) + \hat{\mcG}(Y).
    \end{align*}
    Hence,
    $$\mcG( \mcB[\gamma]_\# \mu) \leq \hat{\mcG}(Y) = \mcG(\nu).$$
    For the general case, the same proof of Proposition \ref{prop: LL on Wasserstein} shows that the Moreau-Yosida envelope $\hat{\mcG}_\lambda$ of $\hat{\mcG}$ is law-invariant, and in particular it induces a well-defined mapping $\mcG_\lambda: \sP_2(\R^d) \to \R$ that is differentiable because of Theorem \ref{thm:gangbo-tudorascu}. Moreover, $\mcG_\lambda \to \mcG$ pointwise. Since \eqref{ineq: barycenters and convexity by plans} holds for $\mcG_\lambda$, we can pass to the limit as $\lambda \searrow 0$ and deduce the same inequality for $\mcG$.
\end{proof}

\begin{remark}
    The inequality is nontrivial because $\mcB[\gamma]_\#\mu$ is obtained from $\mu$ through a deterministic map, whereas $\nu=\pi^2_\#\gamma$ may involve mass splitting. Thus, convexity along plans allows one to compare a non-deterministic displacement with its deterministic barycentric counterpart.
\end{remark}

With the help of the previous lemma, we can upgrade the map-based definition of the Clarke
subdifferential into a plan-level statement.

\begin{theorem}[Clarke subdifferential of convex mappings]\label{thm: clarke subdiff and convex mappings}
    Let $\mcG: \sP_2(\R^d) \to \R \cup \{+ \infty\}$ be convex along plans and locally Lipschitz around $\mu \in \sP_2(\R^d)$. Then,
    $$\Xi \in \partial_C \mcG(\mu) \quad \iff \quad \forall  \nu \in \sP_2(\R^d), \, \forall \gamma \in \Gamma(\mu, \nu), \quad \mcG(\mu) + \int_{\R^{2d}} \inner{\Xi(x), y-x} d \gamma(x,y) \leq \mcG(\nu).$$
\end{theorem}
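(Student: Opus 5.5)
The proof has two directions. The implication ``$\Leftarrow$'' uses only the definition of the generalized derivative. The implication ``$\Rightarrow$'' combines convexity along plans with Lemma~\ref{lemma: barycenters and convexity by plans}.

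\textbf{Direction ``$\Leftarrow$''.} Assume the global inequality holds and fix $w\in C_1(\R^d;\R^d)$. Apply the inequality with $\nu=(\id+tw)_\#\mu$ and $\gamma=(\id,\id+tw)_\#\mu$. This gives
\[
\mcG((\id+tw)_\#\mu)-\mcG(\mu)\ge t\langle \Xi,w\rangle_\mu .
\]
Divide by $t$ and take $\limsup$ as $t\searrow0$ along the constant sequence $\nu=\mu$. Since this is one admissible choice in the definition of $D\mcG(\mu)(w)$, we get $\langle\Xi,w\rangle_\mu\le D\mcG(\mu)(w)$. Hence $\Xi\in\partial_C\mcG(\mu)$.

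\textbf{Direction ``$\Rightarrow$'', first step: reduction to maps.} Let $\Xi\in\partial_C\mcG(\mu)$ and $\gamma\in\Gamma(\mu,\nu)$. Set $T:=\mcB[\gamma]\in L^2_\mu$. By the definition of the barycentric projection,
\[
\int\langle\Xi(x),y-x\rangle\,d\gamma=\langle \Xi,T-\id\rangle_\mu .
\]
By Lemma~\ref{lemma: barycenters and convexity by plans}, $\mcG(T_\#\mu)\le\mcG(\nu)$. It therefore suffices to prove the map version
\[
\mcG(\mu)+\langle\Xi,T-\id\rangle_\mu\le \mcG(T_\#\mu)\qquad\text{for all } T\in L^2_\mu .
\]
Lemma~\ref{lemma: barycenters and convexity by plans} requires lower semicontinuity. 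I would either verify it from local Lipschitzness near $\mu$, or, more robustly, use it only where $\mcG$ is finite. If $\mcG(\nu)=+\infty$ the inequality is trivial.

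\textbf{Direction ``$\Rightarrow$'', second step: the map version via the lift.} Fix $X\sim\mu$ and put $h:=T-\id$. The function $\phi(s):=\mcG(((1-s)\id+sT)_\#\mu)=\hat\mcG(X+s\,h\circ X)$ is convex on $[0,1]$, by convexity along the plan $(\id,T)_\#\mu$ (convexity along plans contains convexity along the interpolations $(1-s)\pi^1+s\pi^2$). Convexity gives
\[
\phi(1)-\phi(0)\ge \lim_{s\searrow0}\frac{\phi(s)-\phi(0)}{s},
\]
and the right-derivative exists. The key point is then to show that this one-sided directional derivative at $\mu$ equals $D\mcG(\mu)(h)=\sigma_{\partial_C\mcG(\mu)}(h)$, using Proposition~\ref{prop:basic-properties-clarke-subdiff}\,vii). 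I would do this in the lift: $\hat\mcG$ is convex along segments in $L^2_\mbP$, since $((1-s)X+sY)_\#\mbP$ is an interpolation along the plan $(X,Y)_\#\mbP$. Hence $\hat\mcG$ is convex and locally Lipschitz near $X$. By the classical fact \cite[Theorem 10.8]{clarke2013functional} and its analogue for convex functions, the Clarke derivative equals the directional derivative and $\partial_C\hat\mcG(X)$ is the convex subdifferential.

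\textbf{Conclusion.} Proposition~\ref{prop: clarke subdifferential and lifting} gives
\[
D\mcG(\mu)(h)=D\hat\mcG(X)(h\circ X)=\hat\mcG'(X;h\circ X)=\phi'(0^+).
\]
Therefore $\langle\Xi,h\rangle_\mu\le D\mcG(\mu)(h)=\phi'(0^+)\le\phi(1)-\phi(0)=\mcG(T_\#\mu)-\mcG(\mu)$, which closes the argument.

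\textbf{Main obstacle.} The hard step is establishing convexity of the lift $\hat\mcG$ on all of $L^2_\mbP$, together with handling the value $+\infty$, so that the Clarke derivative coincides with the directional derivative at $X$. Convexity is only needed along segments starting from $X$, and local Lipschitzness near $X$ then ensures finiteness on a ball. So I would apply the convex-analysis identity on that ball, and extend to arbitrary $T$ by convexity of $\phi$ on $[0,1]$.
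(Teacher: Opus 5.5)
Your proposal is correct and follows the paper's proof essentially step by step. For ``$\Rightarrow$'' you, like the paper, reduce to map perturbations with $\beta=\mcB[\gamma]$ via Lemma~\ref{lemma: barycenters and convexity by plans}, identify $D\mcG(\mu)(\beta-\id)$ with the one-sided directional derivative of the convex lift through Proposition~\ref{prop: clarke subdifferential and lifting} and \cite[Theorem 10.8]{clarke2013functional}, and conclude by monotonicity of convex difference quotients; your ``$\Leftarrow$'' argument, testing with $\nu=(\id+tw)_\#\mu$, is the direction the paper calls straightforward, and the lower-semicontinuity hypothesis of the Lemma that you flag is also left implicit in the paper.
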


\begin{proof}
    Let $\Xi \in \partial_C \mcG(\mu)$ and $\gamma \in \Gamma(\mu,\nu)$. Choose random variables $X,Y \in L^2_\mbP$ such that $(X,Y)_\#\mbP = \gamma$. Since $\hat{\mcG}$ is convex in $L^2_\mbP$, by \cite[Theorem 10.8]{clarke2013functional} and Proposition \ref{prop: clarke subdifferential and lifting},
    $$\inner{\Xi, \beta - \id}_\mu \leq D \mcG(\mu)(\beta - \id) = D \hat{\mcG}(X)\left( (\beta-\id) \circ X \right) = \lim_{t \searrow 0} \frac{\hat{\mcG}(X + t(\beta - \id) \circ X) - \hat{\mcG}(X)}{t}.$$
    Using the monotonicity of the quotients of the convex mapping $\hat{\mcG}$, we deduce that
    $$\forall \beta \in L^2_\mu, \qquad \inner{\Xi, \beta - \id}_\mu \leq \mcG(\beta_\# \mu) - \mcG(\mu).$$
    Take any $\nu \in \sP_2(\R^d)$ and choose $\gamma \in \Gamma(\mu, \nu)$. Taking $\beta = \mcB[\gamma] \in L^2_\mu$ in the above inequality, using the linearity of the inner product and Lemma \ref{lemma: barycenters and convexity by plans} we deduce that
    $$\int_{\R^{2d}} \inner{\Xi(x), y - x} d \gamma(x,y) = \inner{\Xi, \mcB[\gamma] - \id}_\mu \leq \mcG(\nu) - \mcG(\mu).$$
    The other inclusion is straightforward.
\end{proof}

This global characterization is precisely the property used later in the stability argument for the regularized running cost of Proposition \ref{prop:running-cost-LL-closedness}. As a direct consequence, we have the following.

\begin{corollary}\label{coro: optimality condition convex along plan mappings}
    Let $\mcG: \sP_2(\R^d) \to \R$ be convex along plans and locally Lipschitz around $\mu \in \sP_2(\R^d)$. Then,
    $$\mu \in \operatorname{argmin} \mcG \quad \iff \quad 0 \in \partial_C \mcG(\mu).$$
    \hfill $\blacksquare$
\end{corollary}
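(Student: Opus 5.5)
The equivalence follows from Theorem \ref{thm: clarke subdiff and convex mappings}, applied to a functional $\mcG$ that is convex along plans and locally Lipschitz around $\mu$. We prove the two implications separately.

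Suppose first that $0 \in \partial_C \mcG(\mu)$. Take $\Xi = 0$ in the global supporting inequality of Theorem \ref{thm: clarke subdiff and convex mappings}. For every $\nu \in \sP_2(\R^d)$ the set $\Gamma(\mu,\nu)$ is nonempty, since it contains the product plan $\mu\otimes\nu$. Choosing any $\gamma\in\Gamma(\mu,\nu)$, the integral term vanishes and we get $\mcG(\mu) \le \mcG(\nu)$. Hence $\mu \in \operatorname{argmin}\mcG$.

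Conversely, suppose $\mu$ is a global minimizer. The right-hand condition of Theorem \ref{thm: clarke subdiff and convex mappings} with $\Xi=0$ reads $\mcG(\mu)\le\mcG(\nu)$ for all $\nu$ and all $\gamma\in\Gamma(\mu,\nu)$. This is exactly minimality. The theorem's backward implication, which the paper calls straightforward, then gives $0\in\partial_C\mcG(\mu)$.

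That backward implication can also be checked directly from the definitions, without convexity. For $w\in C_1(\R^d;\R^d)$ and $t>0$, minimality gives $\mcG((\id+tw)_\#\mu)-\mcG(\mu)\ge 0$. Restricting the $\limsup$ defining $D\mcG(\mu)(w)$ to the sequence $\nu=\mu$, $t\searrow 0$ can only lower its value, so
\[
D\mcG(\mu)(w)\;\ge\;\limsup_{t\searrow0}\frac{\mcG((\id+tw)_\#\mu)-\mcG(\mu)}{t}\;\ge\;0=\langle 0,w\rangle_\mu .
\]
Therefore $0\in\partial_C\mcG(\mu)$ by Definition \ref{defi: clarke subdifferential}.

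The only real content is the forward direction $0\in\partial_C\mcG(\mu)\Rightarrow$ minimality. That step is precisely where convexity along plans enters: through Lemma \ref{lemma: barycenters and convexity by plans}, it lets a map-based subgradient control arbitrary plans, and this is already packaged in Theorem \ref{thm: clarke subdiff and convex mappings}. No further obstacle remains.
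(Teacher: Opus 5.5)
Your proof is correct and is the argument the paper intends: the corollary is stated as a direct consequence of Theorem~\ref{thm: clarke subdiff and convex mappings} with $\Xi=0$, where nonemptiness of $\Gamma(\mu,\nu)$ reduces the supporting inequality to $\mcG(\mu)\le\mcG(\nu)$. Your direct check that minimality gives $D\mcG(\mu)(w)\ge 0$, by taking $\nu=\mu$ in the $\limsup$, is also valid, and it correctly shows that convexity along plans is needed only for the implication $0\in\partial_C\mcG(\mu)\Rightarrow\mu\in\operatorname{argmin}\mcG$.
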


Thus, for mappings convex along plans, the Clarke subdifferential has the same global optimality interpretation as the classical subdifferential of a convex function, despite being defined through deterministic perturbations.

\begin{remark}[Sharpness of the convexity along plans]
    The assumption of convexity along plans is essential. The conclusion of Theorem \ref{thm: clarke subdiff and convex mappings} does not, in general, follow from mere $d_\mcW$-geodesic convexity. Indeed, let $G:\sP_2(\R)\to\R$ be defined by
    \[ \mu \in \sP_2(\R) \mapsto
    G(\mu):=d_\mcW^2(\mu,\sigma), \qquad
    \sigma:=\tfrac12\delta_{-1}+\tfrac12\delta_1.
    \]
    The functional $G$ is convex along $d_\mcW$-geodesics (see \cite[Theorem 10.18]{ambrosioLecturesOptimalTransport2021}), and Theorem \ref{thm: clarke subdiff of wass dist} yields
    \[
    \partial_C G(\delta_0)=\{0\}.
    \]
    Still, $\delta_0$ is not a minimizer of $G$. To interpret this phenomenon, we recall that every perturbation of $\delta_0$
    detected by $\partial_C$ is of the form
    \[
    (\id+t\psi)_\#\delta_0=\delta_{t\psi(0)},
    \]
    and hence keeps the measure concentrated at a single point, while the $d_\mcW$-geodesic from $\delta_0$ to $\sigma$ is given by
    \[
    \mu_t=\frac12\delta_{-t}+\frac12\delta_t,
    \qquad t\in[0,1],
    \]
    which immediately splits the atom at the origin. Such a variation cannot
    be represented by a deterministic perturbation of $\delta_0$ and is
    therefore invisible to $\partial_C$. This shows both the limitation of map-based variations and the genuine role of convexity along plans in Theorem \ref{thm: clarke subdiff and convex mappings}.
\end{remark}

\section{Lasry-Lions regularization on $\sP_2(\R^d)$}\label{sec: Lasry Lions on Wasserstein}
The Lasry-Lions regularization, introduced in \cite{lasry1986remark}, is a useful tool to regularize functions defined on Hilbert spaces. In this section, with the help of the lifting technique, we show how this regularization extends naturally to $\sP_2(\R^d)$ and its main properties, including relations with the Clarke subdifferential of Definition \ref{defi: clarke subdifferential}.

\subsection{Definition and general properties}
Let $U: L^2_\mbP \to \R$ be a Lipschitz function. The Lasry-Lions regularization $U^\epsilon: L^2_\mbP \to \R$ of $U$ is defined as
$$U^\epsilon(X):= \inf_{Z \in L^2_\mbP} \sup_{Y \in L^2_\mbP} \bigg\{ U(Y) - \frac{1}{2 \epsilon} |Z-Y|^2 + \frac{1}{\epsilon} |Z-X|^2 \bigg\}.$$

\begin{definition}\label{def:lasry-lions on Wasserstein}
    Let $\mcG: \sP_2(\R^d) \to \R$ be Lipschitz. The Lasry-Lions regularization $\mcG^\epsilon: \sP_2(\R^d)$ of $\mcG$ is defined as the map
    $$\mu \in \sP_2(\R^d) \mapsto \mcG^\epsilon(\mu) := \hat{\mcG}^\epsilon(X) \quad \text{where} \quad X_\# \mbP = \mu.$$
\end{definition}

In the proposition below we prove that $\mcG^\epsilon$ is well defined, in the sense that it does not depend on the chosen random variable $X \sim \mu$.

\begin{proposition}\label{prop: LL on Wasserstein}
    Let $\epsilon >0$, $\mcG: \sP_2(\R^d) \to \R$ be a Lipschitz function and let $\hat{\mcG}^\epsilon: L^2_\mbP \to \R$ be the Lasry-Lions regularization of the lifted function $\hat{\mcG}: L^2_\mbP \to \R$. Then, $\hat{\mcG}^\epsilon(X)$ only depends on $X_\# \mbP$. In particular, the mapping $\mcG^\epsilon: \sP_2(\R^d) \to \R$ given by
    $$\mcG^\epsilon(\mu) := \hat{\mcG}^\epsilon(X), \quad \text{where} \quad X_\# \mbP = \mu$$
    is well defined. Moreover, $\mcG^\epsilon$ is of class $C^{1,1}$ and
    \begin{align}
        \Lip(\mcG^\epsilon) \leq \Lip(\mcG), \qquad \qquad 0 \leq \mcG^\epsilon - \mcG \leq \frac{\Lip(\mcG)^2}{2} \epsilon.
    \end{align}
    Furthermore, if $\mcG$ is convex along plans, so is $\mcG^\epsilon$.
\end{proposition}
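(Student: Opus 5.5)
The plan is to work entirely at the level of the lift $\hat{\mcG}$ on the Hilbert space $L^2_\mbP$, where the classical Lasry--Lions theory is available, and then transfer each property back to $\sP_2(\R^d)$. By the lifting proposition of Section~\ref{sec: preliminaries on Wasserstein}, $\hat{\mcG}$ is Lipschitz with the same constant as $\mcG$. The classical results of \cite{lasry1986remark} then give three facts about $\hat{\mcG}^\epsilon$: it is $C^{1,1}$, it satisfies $\Lip(\hat{\mcG}^\epsilon)\le \Lip(\hat{\mcG})$, and it obeys $0\le \hat{\mcG}^\epsilon-\hat{\mcG}\le \Lip(\hat{\mcG})^2\epsilon/2$. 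One can check the last bound through the representation $\hat{\mcG}^\epsilon = (\hat{\mcG}^{\epsilon/2\text{-sup}})_{\epsilon\text{-inf}}$, i.e.\ a sup-convolution followed by an inf-convolution. So the real content of the statement is law-invariance; once that is established, $C^{1,1}$ and the Lipschitz bound descend to $\mcG^\epsilon$ through Definition~\ref{def: C11 on Wass} and the lifting proposition, and the uniform bound holds pointwise.

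\textbf{Step 1: law-invariance.} I would write $\hat{\mcG}^\epsilon = \mathcal I_\epsilon(\mathcal S_{\epsilon/2}\hat{\mcG})$, where $\mathcal S_\lambda U(Z)=\sup_Y\{U(Y)-\frac1{2\lambda}|Z-Y|^2\}$ and $\mathcal I_\lambda V(X)=\inf_Z\{V(Z)+\frac1{2\lambda}|Z-X|^2\}$, and show that each operation preserves law-invariance. Take $X,X'$ with the same law $\mu$. For any competitor $Z$ there exist, for every $\delta>0$, a variable $Z'$ with $(X',Z')_\#\mbP$ within $\delta$ of $(X,Z)_\#\mbP$ in $d_\mcW$ on $\R^{2d}$. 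This follows from the non-atomicity of $(\Omega,\mbP)$ and the approximation lemma \cite[Lemma 2.1]{bertucciApproximationSquaredWasserstein2024} already used in Proposition~\ref{prop: clarke subdifferential and lifting}: approximate $(X,Z)$ by a simple coupling and realise it on top of $X'$. Then $V(Z')$ is close to $V(Z)$, using the law-invariance and continuity of $V$, and $|Z'-X'|$ is close to $|Z-X|$. This gives $\mathcal I_\lambda V(X')\le \mathcal I_\lambda V(X)+o(1)$, and symmetry yields equality. The sup-convolution is handled identically. I expect this step to be the main obstacle. It requires careful use of the continuity of the Lipschitz functions involved and of the measure-preserving rearrangement of couplings on a non-atomic space; it is an exact equality only in the limit $\delta\to0$, since exact rearrangements may not exist.

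\textbf{Step 2: convexity along plans.} I would first note that $\mcG$ is convex along plans if and only if $\hat{\mcG}$ is convex on $L^2_\mbP$. The direction from plan-convexity to convexity of the lift follows by taking $\gamma=(X,Y)_\#\mbP$. For the converse, realise any $\gamma\in\Gamma(\mu,\nu)$ as $(X,Y)_\#\mbP$ and use $((1-t)\pi^1+t\pi^2)_\#\gamma=((1-t)X+tY)_\#\mbP$. For convex Lipschitz $U$ one has $\mathcal S_{\epsilon/2}U\ge U$, and the Lasry--Lions formula reduces to the Moreau envelope composed appropriately. The key point is that for convex $U$, $\mathcal I_\epsilon\mathcal S_{\epsilon/2}U$ coincides with the Moreau envelope $\mathcal I_{\epsilon/2}U$. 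This follows from the semigroup identity $\mathcal I_{\epsilon}\mathcal S_{\epsilon/2}\mathcal I_{\epsilon/2}=\mathcal I_{\epsilon/2}$ together with $\mathcal S_{\epsilon/2}\mathcal I_{\epsilon/2}U=U$ for convex lsc $U$, the latter being the Hopf--Lax duality in Hilbert space. The Moreau envelope of a convex function is convex, since it is an infimal convolution of convex functions. Hence $\hat{\mcG}^\epsilon$ is convex, and by the equivalence just noted, $\mcG^\epsilon$ is convex along plans. If the semigroup identity proves awkward, I would fall back to a direct argument: inf-convolution preserves convexity, and $\mathcal S_{\epsilon/2}\hat{\mcG}=\hat{\mcG}+{}$(quadratic correction) is convex when $\hat{\mcG}$ is, since it is a supremum over $Y$ of functions concave in $Z$ only up to the quadratic term.
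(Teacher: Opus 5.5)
\textbf{There is a genuine gap in Step 2.} Your key claim there is that for convex $U$ the regularization $\mathcal{I}_\epsilon\mathcal{S}_{\epsilon/2}U$ coincides with the Moreau envelope $\mathcal{I}_{\epsilon/2}U$. This is false.

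Take $U=\norm{\cdot}_{L^2_\mbP}$, the lift of $M_2$, which is $1$-Lipschitz and convex along plans. Since $\norm{Z+h}\le\norm{Z}+\norm{h}$, with equality for $h$ parallel to $Z$, one finds $\mathcal{S}_\lambda U=U+\lambda/2$. Hence $\mathcal{I}_\epsilon\mathcal{S}_{\epsilon/2}U(0)=\epsilon/4\neq0=\mathcal{I}_{\epsilon/2}U(0)$. With the correct parametrization (see below) you get $\mathcal{I}_{\epsilon/2}U+\epsilon/2$, which is again not the Moreau envelope.

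The identities you quote do not yield your claim either. Combining $\mathcal{I}_\lambda\mathcal{S}_\lambda\mathcal{I}_\lambda=\mathcal{I}_\lambda$ with $\mathcal{I}_{\epsilon/2}\mathcal{I}_{\epsilon/2}=\mathcal{I}_\epsilon$ gives $\mathcal{I}_\epsilon\mathcal{S}_{\epsilon/2}\mathcal{I}_{\epsilon/2}=\mathcal{I}_\epsilon$, not $\mathcal{I}_{\epsilon/2}$.

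Your fallback reaches the right conclusion for the wrong reason. $\mathcal{S}_\lambda\hat{\mcG}$ is not $\hat{\mcG}$ plus a quadratic. Writing it as a supremum of functions that are affine in $Z$ up to $-\frac{1}{2\lambda}\norm{Z}^2$ only shows that $\mathcal{S}_\lambda\hat{\mcG}+\frac{1}{2\lambda}\norm{\cdot}^2$ is convex, and that holds for every $\hat{\mcG}$.

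The missing observation is the substitution $Y=Z+h$:
\[
\mathcal{S}_\lambda U(Z)=\sup_{h\in L^2_\mbP}\Big\{U(Z+h)-\tfrac{1}{2\lambda}\norm{h}^2\Big\}.
\]
This is a supremum of translates of $U$, hence convex when $U$ is. Also, $\mathcal{I}_\lambda V$ is convex for convex $V$, being the partial infimum in $Z$ of the jointly convex map $(Z,X)\mapsto V(Z)+\frac{1}{2\lambda}\norm{Z-X}^2$. With these two facts, your (correct) equivalence between convexity along plans and convexity of the lift closes the argument. The paper itself simply refers to \cite{lasry1986remark} for this point.

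\textbf{Your decomposition has the parameters swapped.} In your notation, the weights $\frac{1}{2\epsilon}$ on the sup and $\frac1\epsilon$ on the inf in the paper's formula give $\hat{\mcG}^\epsilon=\mathcal{I}_{\epsilon/2}\mathcal{S}_\epsilon\hat{\mcG}$, not $\mathcal{I}_\epsilon\mathcal{S}_{\epsilon/2}\hat{\mcG}$. This is harmless for Step 1, since both operations preserve law-invariance for any parameter. It does break the bound you propose to check. For $U=-\norm{\cdot}_{L^2_\mbP}$ one computes $\mathcal{I}_\epsilon\mathcal{S}_{\epsilon/2}U=U-\epsilon/4$, which is neither $\ge U$ nor $C^{1,1}$. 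With the correct order the bound follows from
\[
\hat{\mcG}\le\mathcal{I}_{\epsilon/2}\mathcal{S}_{\epsilon/2}\hat{\mcG}\le\mathcal{I}_{\epsilon/2}\mathcal{S}_{\epsilon}\hat{\mcG}\le\mathcal{S}_\epsilon\hat{\mcG}\le\hat{\mcG}+\tfrac{\Lip(\mcG)^2}{2}\,\epsilon .
\]

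\textbf{Step 1 is otherwise a valid alternative to the paper's argument.} The paper uses that, for a measure-preserving bijection $\tau$ of $\Omega$, the map $X\mapsto X\circ\tau$ is an isometry of $L^2_\mbP$ fixing $\hat{\mcG}$. This gives $\hat{\mcG}^\epsilon(X\circ\tau)=\hat{\mcG}^\epsilon(X)$ directly from the inf--sup formula. The paper then concludes from $X^1\circ\tau_n\to X^2$ and continuity of $\hat{\mcG}^\epsilon$. That route avoids both the splitting into two convolutions and any coupling approximation. Your approximating $Z'$ can in fact be produced by the same lemma: take $Z':=Z\circ\tau_n$ with $X\circ\tau_n\to X'$.
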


\begin{proof}
    Let \(\tau:\Omega\to\Omega\) be a bimeasurable measure-preserving bijection on $(\Omega; \mbP)$. Since \(\hat{\mcG}\) is law-invariant and composition with \(\tau\) is an isometry of \(L^2_\mbP\), a change of variables in the variational formula defining \(\hat{\mcG}^\varepsilon\) gives
    \begin{align}\label{eq: LL regularization and measure preserving maps}
            \hat{\mcG}^\varepsilon(X\circ\tau)=\hat{\mcG}^\varepsilon(X) \qquad \forall X\in L^2_\mbP.
    \end{align}
    Let \(X^1, X^2\in L^2_\mbP\) with the same law. By \cite[Lemma 5.23]{carmonaProbabilisticTheoryMean2018}, there exist bimeasurable measure-preserving (almost) bijections \((\tau_n)_n\) such that $X^1 \circ \tau_n \to X^2$ strongly in $L^2_\mbP$. Passing to the limit in \eqref{eq: LL regularization and measure preserving maps} and using the continuity of $\hat{\mcG}^\epsilon$ we deduce that
    $$\hat{\mcG}^\epsilon(X^2) = \hat{\mcG}^\epsilon(X^1).$$
    The remaining part of the statement follows \cite{lasry1986remark}.
\end{proof}

\begin{blue}
    
\end{blue}
\begin{remark}
    One of the main features of the Lasry-Lions is that it not only regularizes $\mcG$, but also satisfies $\mcG^\epsilon \geq \mcG$ and converges uniformly on $\sP_2(\R^d)$ (and not only locally uniformly). These properties will be used in the proof of Theorem \ref{thm: non smooth pmp}.
\end{remark}

\subsection{Relations with Clarke subdifferential}
\hfill

It is known that the Lasry-Lions approximation also provides first-order information of the regularized functional. More specifically, in \cite{benoist1992convergence} it is proven that weak limits of gradients of $U^\epsilon$ belong to the Clarke subdifferential of $U$ at a given point. This property can be extended to the Wasserstein space $\sP_2(\R^d)$.

\begin{proposition}[Clarke subdifferential and Lasry-Lions regularization]
    \label{prop: Clarke subdiff and Lasry Lions}
    Let \( \mcG:\P_2(\R^d)\to\R\) be a Lipschitz continuous function and let \(\mcG^\epsilon\) be its Lasry-Lions regularization. Let $\mu^\epsilon \to \mu$ in $\sP_2(\R^d)$ and set
    $$\Xi^\varepsilon := \nabla_\mu \mcG^\epsilon(\mu^\epsilon) \in L^2_{\mu^\epsilon}(\R^d;\R^d).$$
    Let $\gamma^\varepsilon\in\Gamma(\mu,\mu^\epsilon)$ be such that $\norm{\pi^2 - \pi^1}_{\gamma^\epsilon} \to 0$ as $\epsilon \searrow 0$. Then, $\{\PP_{\gamma^\varepsilon}\Xi^\varepsilon\}_\epsilon$ is bounded in $L^2_\mu$ and any of its weak limit points belongs to $\partial_C \mcG(\mu)$.
\end{proposition}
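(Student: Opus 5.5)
The plan is to reduce this to a Wasserstein version of the generalized-derivative inequality used in the proof of Proposition~\ref{prop:closed-graph-clarke-det}. The extra ingredient we need is an estimate that passes from $\mcG^\epsilon$ back to $\mcG$.

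\emph{Boundedness.} By Proposition~\ref{prop: LL on Wasserstein}, $\mcG^\epsilon$ is $C^{1,1}$ with $\Lip(\mcG^\epsilon)\le \Lip(\mcG)$. Corollary~\ref{coro: clarke subdifferential of C11 mappings} gives $\partial_C\mcG^\epsilon(\mu^\epsilon)=\{\Xi^\epsilon\}$, so Proposition~\ref{prop:basic-properties-clarke-subdiff}\,vi) yields $\|\Xi^\epsilon\|_{\mu^\epsilon}\le \Lip(\mcG)$. Inequality \eqref{ineq: bounds-pullback} then gives $\|\PP_{\gamma^\epsilon}\Xi^\epsilon\|_\mu\le \Lip(\mcG)$. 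Hence weak limit points exist.

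\emph{Transfer of test functions.} Fix $\psi\in C_c^\infty(\R^d;\R^d)$ and let $\Xi$ be a weak limit along a subsequence. Exactly as in the proof of Proposition~\ref{prop:closed-graph-clarke-det},
\[
\big|\langle \PP_{\gamma^\epsilon}\Xi^\epsilon,\psi\rangle_\mu-\langle\Xi^\epsilon,\psi\rangle_{\mu^\epsilon}\big|\le \Lip(\mcG)\Lip(\psi)\|\pi^2-\pi^1\|_{\gamma^\epsilon}\to0 .
\]
Therefore $\langle\Xi,\psi\rangle_\mu=\lim\langle\Xi^\epsilon,\psi\rangle_{\mu^\epsilon}$. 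By density of $C_c^\infty$ in $L^2_\mu$ and Proposition~\ref{prop:basic-properties-clarke-subdiff}\,vii), it then suffices to prove
\[
\limsup_\epsilon \langle \Xi^\epsilon,\psi\rangle_{\mu^\epsilon}\le D\mcG(\mu)(\psi).
\]

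\emph{Key estimate (main obstacle).} Here $\mcG^\epsilon$ is not $\mcG$, so upper semicontinuity of $D\mcG(\cdot)(\psi)$ cannot be applied directly. I would lift to $L^2_\mbP$. Choose $X^\epsilon\sim\mu^\epsilon$ and $X\sim \mu$ with $X^\epsilon\to X$, using \cite[Lemma 2.1]{bertucciApproximationSquaredWasserstein2024}. By Theorem~\ref{thm:gangbo-tudorascu}, $\nabla\hat{\mcG}^\epsilon(X^\epsilon)=\Xi^\epsilon\circ X^\epsilon$, so
\[
\langle\Xi^\epsilon,\psi\rangle_{\mu^\epsilon}=\langle \nabla\hat{\mcG}^\epsilon(X^\epsilon),\psi\circ X^\epsilon\rangle_\mbP .
\]
Now apply the Hilbert-space result of \cite{benoist1992convergence}. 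Weak limits of $\nabla\hat{\mcG}^\epsilon(X^\epsilon)$ with $X^\epsilon\to X$ lie in $\partial_C\hat{\mcG}(X)$. Also $\psi\circ X^\epsilon\to\psi\circ X$ strongly. Along a subsequence realizing the limsup, with weak limit $Z$, we obtain
\[
\limsup\langle\Xi^\epsilon,\psi\rangle_{\mu^\epsilon}=\langle Z,\psi\circ X\rangle_\mbP\le D\hat{\mcG}(X)(\psi\circ X)=D\mcG(\mu)(\psi),
\]
where the final equality is \eqref{eq: equality generalized derivative}.

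If one prefers not to invoke Benoist's result as a black box, I would reprove the needed step directly. The $C^{1,1}$ property with constant of order $1/\epsilon$ gives the Taylor bound
\[
t\langle\nabla\hat{\mcG}^\epsilon(X^\epsilon),V\rangle\le \hat{\mcG}^\epsilon(X^\epsilon+tV)-\hat{\mcG}^\epsilon(X^\epsilon)+C t^2/\epsilon .
\]
Taking $t=\sqrt\epsilon$ and using $0\le\hat{\mcG}^\epsilon-\hat{\mcG}\le \Lip^2\epsilon/2$, the difference quotient of $\hat{\mcG}^\epsilon$ is within $O(\sqrt\epsilon)$ of that of $\hat{\mcG}$. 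The limsup is then bounded by $D\hat{\mcG}(X)(V)$. This choice of scales, balancing the $\epsilon$ uniform error against the $1/\epsilon$ curvature, is the delicate point. If the $C^{1,1}$ constant of the Lasry–Lions envelope is not of order $1/\epsilon$, I would instead use the sup-inf structure (semiconcavity of the inner sup-envelope) as in \cite{lasry1986remark} to obtain the one-sided bound.
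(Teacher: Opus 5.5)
Your main line is the paper's proof. You use the bound from $\Lip(\mcG^\epsilon)\le\Lip(\mcG)$ and \eqref{ineq: bounds-pullback}, the lift via \cite[Lemma 2.1]{bertucciApproximationSquaredWasserstein2024} and Theorem~\ref{thm:gangbo-tudorascu}, and \cite[Theorem 3]{benoist1992convergence} to place weak limits of $\Xi^\epsilon\circ X^\epsilon$ in $\partial_C\hat{\mcG}(X)$. Concluding through \eqref{eq: equality generalized derivative}, instead of identifying the limit with $\Bar_X[\Xi]$ and using $\partial_C\mcG(\mu)=\Bar_X[\partial_C\hat{\mcG}(X)]$, is only a cosmetic difference.

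The optional self-contained replacement for Benoist's theorem does not work, however.

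\emph{Why the scale balancing fails.} After you divide your Taylor bound by $t$, the curvature error is $Ct/\epsilon$, which equals $C\epsilon^{-1/2}\to\infty$ at $t=\sqrt\epsilon$. More generally, the errors $Ct/\epsilon$ and $\Lip(\mcG)^2\epsilon/(2t)$ have an $\epsilon$-independent product, so their sum is at least $\sqrt{2C}\,\Lip(\mcG)$ for every $t>0$. The gradient Lipschitz constant of the envelope really is of order $1/\epsilon$, so your contingency clause is never triggered.

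\emph{No choice of scales can fix it.} The generic properties you rely on (uniform $O(\epsilon)$ closeness, $O(1/\epsilon)$ curvature, even $\Lip(U^\epsilon)\le\Lip(U)$) do not suffice on their own. On $\R$, take $U(x)=x$ and $U^\epsilon(x)=x+\epsilon\bigl(\tfrac{\pi}{2}-\arctan(x/\epsilon)\bigr)$. These satisfy $0\le U^\epsilon-U\le\pi\epsilon$, $\Lip(U^\epsilon)\le 1$ and $|(U^\epsilon)''|\le 1/\epsilon$. Yet $(U^\epsilon)'(0)=0\notin\partial_C U(0)=\{1\}$. One-sided curvature bounds on the inner envelope give the same $t/\epsilon$ loss.

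\emph{What works instead.} You must use the envelope structure so that $t\searrow0$ can be taken at fixed $\epsilon$. Write $\hat{\mcG}^\epsilon(X)=\inf_Z\{V_\epsilon(Z)+\frac1\epsilon|Z-X|^2\}$ with $V_\epsilon(Z):=\sup_Y\{\hat{\mcG}(Y)-\frac1{2\epsilon}|Z-Y|^2\}$.

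First, at the unique minimizer $Z_X$ you have $\nabla\hat{\mcG}^\epsilon(X)=\frac2\epsilon(X-Z_X)\in\partial_C V_\epsilon(Z_X)$ and $|Z_X-X|\le\epsilon\Lip(\mcG)$.

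Second, let $Y_t$ be a $t^2$-maximizer for $V_\epsilon(Z'+tW)$, and test $V_\epsilon(Z')$ with $\tilde Y:=Y_t-tW$. This gives the curvature-free bound
\[
V_\epsilon(Z'+tW)-V_\epsilon(Z')\le\hat{\mcG}(\tilde Y+tW)-\hat{\mcG}(\tilde Y)+t^2,\qquad |\tilde Y-Z'|\le2\epsilon\Lip(\mcG)+\sqrt{2\epsilon}\,t.
\]
Hence $\langle\nabla\hat{\mcG}^\epsilon(X^\epsilon),W\rangle\le DV_\epsilon(Z_{X^\epsilon})(W)$ is controlled by difference quotients of $\hat{\mcG}$ with vanishing steps, taken at points within $3\epsilon\Lip(\mcG)+|X^\epsilon-X|+o(1)$ of $X$. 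Its $\limsup$ is therefore at most $D\hat{\mcG}(X)(W)$. The $O(\epsilon)$ displacement is absorbed into the $Y\to X$ part of the generalized derivative rather than traded against $t$.

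This is the mechanism behind \cite[Theorem 3]{benoist1992convergence}. Your proof stands as long as you keep that citation rather than the scale-balancing argument.
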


\begin{proof}
    We first see that Propositions \ref{prop: LL on Wasserstein} and \ref{prop:basic-properties-clarke-subdiff} together with \eqref{ineq: bounds-pullback} yield,
    \begin{align}\label{eq: boundedness gradients Lasry Lions}
        \|\mathsf P_{\gamma^\varepsilon}\Xi^\varepsilon\|_{L^2_\mu} \leq 
        \|\Xi^\varepsilon\|_{L^2_{\mu^\epsilon}} \leq \Lip(\mcG).
    \end{align}
    In particular, the set $\{\PP_{\gamma^\varepsilon}\Xi^\varepsilon\}_\epsilon$ admits cluster points in the weak topology of $L^2_\mu$. Let $\xi$ be any of these points and, up to relabeling, write $\mathsf P_{\gamma^\varepsilon}\Xi^\varepsilon
    \rightharpoonup \xi$.
    
    Fix $X \in L^2_\mbP$ with \(X_\# \mbP=\mu\). By \cite[Lemma 2.1]{bertucciApproximationSquaredWasserstein2024}, there are random variables $X^\epsilon \to X$ such that $X^\epsilon_\# \mbP = \mu^\epsilon$. By Theorem \ref{thm:gangbo-tudorascu}, $\nabla \hat{\mcG}^\epsilon(X^\epsilon) = \Xi^\epsilon \circ X^\epsilon$. Moreover,  by \eqref{eq: boundedness gradients Lasry Lions} we can suppose without loss of generality that there is $\Xi \in L^2_\mbP$ such that $\Xi^\epsilon \circ X^\epsilon \rightharpoonup \Xi$ weakly in $L^2_\mbP$. By \cite[Theorem 3]{benoist1992convergence}\footnote{In the cited article, the authors prove the result for the $\sup-\inf$ Lasry-Lions regularization. However, using the property $\partial_C(-U) = - \partial_C U$ together with the relations between the $\sup-\inf$ regularization and the $\inf-\sup$ regularization, the result holds for the $\inf-\sup$ regularization as well.} , we deduce that $\Xi \in \partial_C\hat{\mcG}(X)$.
    If we prove that $\xi =\Bar_X[\Xi]$, Proposition \ref{prop: clarke subdifferential and lifting} would yield that $\xi \in \partial_C \mcG(\mu)$. To do so, it suffices to prove that
    $$\forall \psi \in C_c^\infty(\R^d;\R^d), \qquad \inner{\xi, \psi}_\mu = \inner{\Xi, \psi \circ X}_\mbP.$$
    
    By definition of the pullback mapping,
    \begin{align*}
        \inner{\xi, \psi}_\mu = \lim_{\epsilon \searrow 0} \inner{\mathsf P_{\gamma^\varepsilon}\Xi^\varepsilon, \psi }_\mu &= \lim_{\epsilon \searrow 0} \int \inner{\Xi^\epsilon(y), \psi(x)} d \gamma^\epsilon(x,y).
    \end{align*}
    
    We see that
    \begin{align*}
        \bigg| \int \inner{\Xi^\epsilon(y), \psi(x)} - \inner{\Xi^\epsilon(y), \psi(y)}   d \gamma^\epsilon(x,y) \bigg| \leq \norm{\Xi^\epsilon}_{\mu^\epsilon} \Lip(\psi) \norm{\pi^2 - \pi^1}_{L^2_{\gamma^\epsilon}} \xrightarrow[\epsilon \searrow 0]{} 0.
    \end{align*}
    
    Since $X^\epsilon \to X$ strongly, repeating the same argument as above and using the weak convergence $\Xi^\epsilon \circ X^\epsilon \rightharpoonup \Xi$, it follows that
    \begin{align*}
        \inner{\xi, \psi}_\mu = \lim_{\epsilon \searrow 0} \inner{\Xi^\epsilon, \psi}_{\mu^\epsilon} = \lim_{\epsilon \searrow 0} \inner{\Xi^\epsilon\circ X^\epsilon, \psi \circ X^\epsilon}_\mbP &= \lim_{\epsilon \searrow 0} \inner{\Xi^\epsilon \circ X^\epsilon, \psi \circ X}_\mbP = \inner{\Xi, \psi \circ X}_\mbP.
    \end{align*}
\end{proof}

\begin{proposition}\label{prop:running-cost-LL-closedness}
    Suppose that $L$ satisfies [A\ref{hyp: assumptions lagrangian}.i,ii] and that it is convex along plans. Let $(t^\epsilon, u^\epsilon, \mu^\epsilon) \to (t, u, \mu)$ and consider $\gamma^\epsilon \in \Gamma(\mu, \mu^\epsilon)$ such that $\norm{\pi^2 - \pi^1}_{\gamma^\epsilon} \to 0$. Let $L^\epsilon$ be the Lasry-Lions regularization of $L$ w.r.t. $\mu$, and set $\Xi^\epsilon := \nabla_\mu L^\epsilon(t^\epsilon, u^\epsilon, \mu^\epsilon)$. Then, any limit point $\Xi$ of $\PP_{\gamma^\epsilon} \Xi^\epsilon$ in the weak topology of $L^2_\mu$ (which always exists) is such that
    $$\Xi \in \partial_C L(t,u,\cdot)(\mu).$$
\end{proposition}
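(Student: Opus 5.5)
The plan is to use the global supporting characterization of Theorem~\ref{thm: clarke subdiff and convex mappings}, applied to the regularized functionals $L^\epsilon(t^\epsilon,u^\epsilon,\cdot)$, and then pass to the limit in the resulting inequalities. By Proposition~\ref{prop: LL on Wasserstein}, each $L^\epsilon(t^\epsilon,u^\epsilon,\cdot)$ is $C^{1,1}$ and convex along plans. Corollary~\ref{coro: clarke subdifferential of C11 mappings} then gives $\partial_C L^\epsilon(t^\epsilon,u^\epsilon,\cdot)(\mu^\epsilon)=\{\Xi^\epsilon\}$. Theorem~\ref{thm: clarke subdiff and convex mappings} therefore yields, for every $\nu\in\sP_2(\R^d)$ and every $\eta\in\Gamma(\mu^\epsilon,\nu)$,
\[
L^\epsilon(t^\epsilon,u^\epsilon,\mu^\epsilon)+\int\langle \Xi^\epsilon(y),z-y\rangle\,d\eta(y,z)\le L^\epsilon(t^\epsilon,u^\epsilon,\nu).
\]

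\textbf{Boundedness.} Lipschitz constants are local in [A\ref{hyp: assumptions lagrangian}.ii]. I would regularize a globally Lipschitz truncation of $L$ that agrees with $L$ on $B_\mcW[\mu,\epsilon_0]$, or simply assume the regularization is taken of a globally Lipschitz representative, uniformly in $(t,u)$. Then $\|\Xi^\epsilon\|_{\mu^\epsilon}\le C$, and \eqref{ineq: bounds-pullback} gives boundedness of $\PP_{\gamma^\epsilon}\Xi^\epsilon$ in $L^2_\mu$, so weak limit points exist.

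\textbf{Passing to the limit.} Fix $\nu$ and a target plan $\gamma\in\Gamma(\mu,\nu)$, and write $\gamma(dx,dz)=\mu(dx)\gamma_x(dz)$. The natural coupling to use is the composition $\eta^\epsilon\in\Gamma(\mu^\epsilon,\nu)$ obtained by gluing $\gamma^\epsilon$ (from $\mu$ to $\mu^\epsilon$) with $\gamma$, i.e.\ $\eta^\epsilon=\pi^{2,3}_\#\rho^\epsilon$ where $\rho^\epsilon(dx,dy,dz)=\gamma^\epsilon(dx,dy)\gamma_x(dz)$. Then
\[
\int\langle\Xi^\epsilon(y),z-y\rangle\,d\rho^\epsilon=\int\langle\Xi^\epsilon(y),z-x\rangle\,d\rho^\epsilon+\int\langle\Xi^\epsilon(y),x-y\rangle\,d\gamma^\epsilon .
\]
The second term is bounded by $C\|\pi^2-\pi^1\|_{\gamma^\epsilon}\to0$. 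The first term equals $\langle \PP_{\gamma^\epsilon}\Xi^\epsilon,\mcB[\gamma]-\id\rangle_\mu$, because integrating out $y$ given $x$ produces the pullback, and integrating out $z$ produces the barycenter. Since $\mcB[\gamma]-\id\in L^2_\mu$ is fixed, this converges along the subsequence to $\langle\Xi,\mcB[\gamma]-\id\rangle_\mu=\int\langle\Xi(x),z-x\rangle\,d\gamma$.

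\textbf{Convergence of function values.} By Proposition~\ref{prop: LL on Wasserstein}, $|L^\epsilon-L|\le C\epsilon$ uniformly. Using local Lipschitz continuity in $\mu$ (uniform in $(t,u)$) together with continuity in $(t,u)$ from [A\ref{hyp: assumptions lagrangian}.i], we get $L(t^\epsilon,u^\epsilon,\mu^\epsilon)\to L(t,u,\mu)$ and $L(t^\epsilon,u^\epsilon,\nu)\to L(t,u,\nu)$. Hence $L(t,u,\mu)+\int\langle\Xi(x),z-x\rangle\,d\gamma\le L(t,u,\nu)$ for all $\nu$ and $\gamma$. The reverse implication of Theorem~\ref{thm: clarke subdiff and convex mappings}, applied to $L(t,u,\cdot)$, which is convex along plans and locally Lipschitz, gives $\Xi\in\partial_C L(t,u,\cdot)(\mu)$.

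\textbf{Main obstacle.} The hardest step is justifying the global supporting inequality at the $\epsilon$ level. This requires that the regularized functional is convex along plans and that its Lipschitz bound is uniform in $(t^\epsilon,u^\epsilon)$, which in turn requires handling the merely local Lipschitz assumption. If a global truncation destroys convexity, I would instead restrict to $\nu$ in a small ball around $\mu$, where the inequality is still obtained from convexity of the lift via monotone difference quotients, and argue that the local version of the inequality suffices for the converse direction of Theorem~\ref{thm: clarke subdiff and convex mappings}, since only directions $t\psi$ with $t\searrow0$ are needed there.
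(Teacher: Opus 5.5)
Your proposal is correct and follows the paper's proof essentially step by step. The paper likewise applies the supporting inequality of Theorem~\ref{thm: clarke subdiff and convex mappings} to $L^\epsilon(t^\epsilon,u^\epsilon,\cdot)$ along the conditionally independent gluing $\mu(dx)\gamma^\epsilon_x\otimes\eta_x$. It then splits off the error $\|\Xi^\epsilon\|_{\mu^\epsilon}\|\pi^2-\pi^1\|_{\gamma^\epsilon}$, identifies the main term as $\langle \PP_{\gamma^\epsilon}\Xi^\epsilon,\mcB[\eta]-\id\rangle_\mu$, and passes to the limit using $0\le L^\epsilon-L\le C\epsilon$ together with [A\ref{hyp: assumptions lagrangian}.i].

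There are only two cosmetic differences. First, the paper carries out the converse direction of that theorem explicitly, by testing with $\nu=(\id+s\psi)_\#\mu$, where you cite it. Second, the paper handles the local-Lipschitz versus convexity-along-plans issue you flag with the same localization remark (taking $\nu$ near $\mu$) that you propose as a fallback.
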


\begin{proof}
    Let $\nu \in \sP_2(\R^d)$ and $\eta \in \Gamma(\mu, \nu)$. Disintegrate $\eta = \mu(dx) \eta_x(dz)$ and $\gamma^\epsilon = \mu(dx) \gamma^\epsilon_x(dy)$ and define the plan $\eta^\epsilon := \mu(dx) \gamma^\epsilon_x \otimes \eta_x(dy,dz)$. Since $L^\epsilon(t,u,\cdot)$ is convex along plans, by Proposition \ref{prop: LL on Wasserstein} and Theorem \ref{thm: clarke subdiff and convex mappings} we deduce that
    \begin{align}\label{eq: L epsilon inequality}
        L^\epsilon(t^\epsilon, u^\epsilon, \mu^\epsilon) + \int_{\R^{3d}} \inner{\Xi^\epsilon(y), z-y} d \eta^\epsilon(x,y,z) \leq L^\epsilon(t^\epsilon, u^\epsilon, \nu)
    \end{align}
    Disintegrating and applying the definition of $\eta^\epsilon$,
    $$\int_{\R^{2d}} \inner{\Xi^\epsilon(y), z-y} d \eta^\epsilon(x,y,z) = \int_{\R^{2d}} \inner{\Xi^\epsilon(y), \mcB[\eta](x) - y} d \gamma^\epsilon(x,y).$$
    Hölder's inequality, Proposition \ref{prop:basic-properties-clarke-subdiff} and the definition of pullback mapping yield
    \begin{align*}
        \int_{\R^{2d}} \inner{\Xi^\epsilon(y), \mcB[\eta](x) - y} d \gamma^\epsilon(x,y) &= \int_{\R^{2d}} \inner{\Xi^\epsilon(y), \mcB[\eta](x) - x} d \gamma^\epsilon(x,y) + \norm{\Xi^\epsilon}_{L^2_{\mu^\epsilon}} \norm{\pi^2 - \pi^1}_{\gamma^\epsilon}\\
        &= \int_{\R^d} \inner{\PP_{\gamma^\epsilon} \Xi^\epsilon(x), \mcB[\eta](x) - x} d \mu(x) + \norm{\Xi^\epsilon}_{L^2_{\mu^\epsilon}} \norm{\pi^2 - \pi^1}_{\gamma^\epsilon}.
    \end{align*}
    Using the weak convergence $\PP_{\gamma^\epsilon} \Xi^\epsilon \rightharpoonup \Xi$ together with Proposition \ref{prop:basic-properties-clarke-subdiff}.vi) and Proposition \ref{prop: LL on Wasserstein}, we deduce that
    $$\lim_{\epsilon \searrow 0} \int_{\R^{2d}} \inner{\Xi^\epsilon(y), z-y} d \eta^\epsilon(x,y,z) = \int_{\R^d} \inner{\Xi(x), \mcB[\eta](x) - x} d \mu(x) = \int_{\R^{2d}} \inner{\Xi(x), y-x} d \eta(x,y).$$
    Proposition \ref{prop: LL on Wasserstein}, \eqref{eq: L epsilon inequality} and the continuity of [A\ref{hyp: assumptions lagrangian}.i] provide
    $$L(t^\epsilon, u^\epsilon, \mu^\epsilon) +  \int_{\R^{3d}} \inner{\Xi^\epsilon(y), z-y} d \eta^\epsilon(x,y,z) \leq \frac{C}{2} \epsilon +  L(t^\epsilon, u^\epsilon, \nu).$$
    Letting $\epsilon \searrow 0$ we deduce that
    $$\forall \nu \in \sP_2(\R^d), \forall \eta \in \Gamma(\mu, \nu), \qquad L(t,u, \mu) + \int_{\R^{2d}} \inner{\Xi(x), y-x} d \eta(x,y) \leq L(t,u,\nu).$$
    For $\psi \in C^\infty_c(\R^d;\R^d)$, letting $\nu = (\id + s \psi)_\# \mu$ and $\eta = (\id, \id + s \psi)_\# \mu$,
    $$\inner{\Xi, \psi}_\mu \leq \limsup_{s \searrow 0} \frac{L(t,u, (\id+ s \psi)_\# \mu) - L(t,u,\mu)}{s} \leq D L(t,u,\cdot)(\mu)(\psi).$$
    Therefore, $\Xi \in \partial_C L(t,u,\cdot)(\mu)$.
\end{proof}

\section{Pontryagin maximum principle}\label{sec: PMP on Wasserstein}

In this section we prove Theorem \ref{thm: non smooth pmp}. During this section, for each admissible control $u\in\mathcal U_{ad}$ we denote by $\mu^u$ the associated trajectory, while $g_u$ stands for the mapping
\begin{align}\label{eq: definition g sub u}
    g_u(\cdot) := \bigg( t \mapsto g(\mu^u_t) \bigg) \in C([0,T])
\end{align}

Let $(\mu^*,u^*)$ be a local minimizer of radius $\tilde{\delta}>0$ for~\eqref{prob:P}. The stability ~\eqref{eq:stability-continuity-equation} shows that there is a $\delta > 0$ such that $(\mu^*, u^*)$ is a global minimum over the optimal control problem
\begin{align*}
    \min_{u \in\mcU_{ad} \cap B_{L^2}[u^*,\delta]} & \mcJ(\mu^u(T)) + \int_0^T L(t, u(t), \mu^u(t))\,\mathrm{d}t \\
    \text{s.t} \quad &(\mu^u,u) \ \text{solves }   \left\{
      \begin{array}{l}
      \partial_t\mu_t+\operatorname{div}\!\big(f[\mu_t](t,u_t,\cdot)\,\mu_t\big)=0\\
      \mu_{|t=0}=\mu_0,
      \end{array}
      \right., \\
    &g(\mu^u(t)) \leq 0 \quad \forall t \in [0,T].
\end{align*}
Setting $\mcU_\delta
:=
\mcU_{ad}\cap B_{d_{L^2}}[u^*,\delta]$, we have that $(\mu^*, u^*)$ is a global minimum of the optimal control problem where the set of controls is $\mcU_{\delta}$.\\

The stability \eqref{eq:stability-continuity-equation} yields that we can shrink $\delta$ and suppose that $\mcJ$ and $g$ are both Lipschitz continuous on the set
$$\mathcal{A}_\delta :=\{\mu^u(t) : t \in [0,T], u \in \mcU_{\delta},  (\mu^u, u) \text{ solves } \eqref{eq: continuity eq}\}.$$
Hence, the McShane-Whitney theorem (cf. \cite{mcshane1934extension}) provides that $\mcJ$ and $g$ admit both Lipschitz extensions to $\sP_2(\R^d)$ that agree with $\mcJ$ and $g$ on open set $\mcO$ containing $\mathcal A_\delta$. Similarly, shrinking again $\delta$ if necessary, by [A\ref{hyp: assumptions lagrangian}.ii], we can take $C > 0$ such that for every $(t,u)$, the mapping $L(t,u,\cdot)$ is $C$-Lipschitz on an open set $\mcO$ containing $\mcA$.  Using the explicit expression of the McShane extension of \cite{mcshane1934extension} we can re-define $L$ as
$$L(t,u,\mu) := \sup_{\sigma \in \mcO} \left\{ L(t,u, \sigma) -  C d_\mcW(\mu, \sigma)\right\}.$$
This mapping agrees with the original function on $[0,T] \times U \times \mcO$, is $C$-Lipschitz on $\sP_2(\R^d)$ for every $(t,u)$, and the separability of $\sP_2(\R^d)$ shows that $L(\cdot, \mu)$ is Borel measurable. Moreover, since the Clarke subdifferential is a local object, the inclusions described by Theorem \ref{thm: non smooth pmp} still hold for the original functions $\mcJ$, $g$, $L$ if they hold for their extensions.\\

Consequently, in what follows we suppose without loss of generality $\mcJ$ and $g$ are globally Lipschitz and that [A\ref{hyp: assumptions lagrangian}.ii] holds globally.

\begin{remark}
    We point out that Proposition \ref{prop:running-cost-LL-closedness} still holds for the extension of $L$ since the arguments presented there are local. In fact, starting the proof we can take $\nu \in \mcO$ and replace the extension by the original function $L$ that is convex along plans.
\end{remark}

\subsection{Diffusive perturbation}
As already mentioned in the introduction, the compact-support assumption on $\supp(\mu_0)$ plays a structural role in the existing approaches, rather than being merely technical. In \cite{bonnet2019pontryagin, bonnet2021necessary, bonnetrossi2019}, it is needed to uniformize the error term in the Taylor expansion of the perturbed trajectories, while the technique employed in \cite{urrea2026pontryagin} relies on the existence of a finite covering of $\supp(\mu_0)$ by arbitrarily small balls (see \cite[Proposition 5.1]{urrea2026pontryagin}) to guarantee that the diffusive perturbation is of order $o(\rho)$. We overcome these difficulties by refining the diffusive perturbation argument of \cite{urrea2026pontryagin} so as to avoid any compactness assumption on $\supp(\mu_0)$. The argument does not require the control space to be a subset of a Euclidean space, and therefore extends to the separable metric space $(U,d_U)$ considered here and to any initial condition \(\mu_0\in\sP_2(\R^d)\). We state the resulting perturbation estimate below and defer its proof to Appendix~\ref{sec: diffusive perturbation}.

\begin{proposition}[Diffuse linearization]
    \label{prop:diffuse-linearization}
    
    Assume [A\ref{hyp:dynamic-non-smooth}] and [A\ref{hyp: assumptions lagrangian}.i,ii]. Let \(\bar u,u\in\mathcal U_{\rm ad}\) and denote by \(\bar\mu\) the trajectory associated with \(\bar u\) and by
    \(\Phi^{\bar u}_{(0,t)}\) its flow. Then, for every $\epsilon > 0$ and any \(\rho \in (0,1)\), there exists a measurable set
    \(E_\rho\subset[0,T]\), with $\sL(E_\rho) = \rho T$ such that for the control
    $$u^\rho := \begin{cases}
        \bar{u} \quad \text{ in } [0,T] \setminus E_\rho\\
        u \quad \text{ in } E_\rho
    \end{cases}$$
    and its corresponding flow \(\Phi^\rho_{(0,t)}\) it holds that
    \[
    \Phi^\rho_{(0,\cdot)}
    =
    \Phi^{\bar u}_{(0,\cdot)}
    +\rho\Psi
    +o(\rho)
    \qquad
    \text{in }C([0,T]; L^2_{\mu_0}),
    \]
    and
    \begin{equation}\label{eq:Lyap-L}
    \int_{E_\rho} \bigg( L^\epsilon(t, u^\rho_t, \bar{\mu}_t)  - L^\epsilon(t, \bar{u}_t, \bar{\mu}_t) \bigg) \, dt = \rho \int_0^T \bigg( L^\epsilon(t, u_t, \bar{\mu}_t) - L^\epsilon(t, \bar{u}_t, \bar{\mu}_t) \bigg) \, dt
    \end{equation}
    where \(\Psi\in AC([0,T]; L^2_{\mu_0})\) is the unique solution, for
    \(\mu_0\)-a.e. \(x\), of
    \begin{align}\label{eq: linearized equation}
        \begin{cases}
            \partial_t \Psi(t,x) &=\quad f[\bar{\mu}_t](t, u_t, \Phi^{\bar{u}}_{(0,t)}(x)) - f[\bar{\mu}_t](t,\bar{u}_t, \Phi^{\bar{u}}_{(0,t)}(x))\\
            &+\quad D_x f[\bar{\mu}_t](t, \bar{u}_t, \Phi^{\bar{u}}_{(0,t)}(x)) \cdot \Psi(t, x)\\
            &+\quad \displaystyle\int_{\R^d} \bigg( \nabla_\mu f[\bar{\mu}_t](t, \bar{u}_t, \Phi^{\bar{u}}_{(0,t)}(x))(\Phi^{\bar{u}}_{(0,t)}(y) ) \cdot \Psi(t,y) \bigg) \, d \mu_0(y)\\
            \Psi(0,x) &=\quad 0
        \end{cases}.
    \end{align}
\end{proposition}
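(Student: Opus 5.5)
The plan is to construct $E_\rho$ via a Lyapunov-type convexity argument on a finite-dimensional discretization, in the spirit of \cite{li2012optimal}, but with the discretization carried out in the Hilbert space $L^1([0,T];L^2_{\mu_0})$ rather than over a covering of $\supp(\mu_0)$.

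\textbf{Step 1: reduction to a vector-valued Lyapunov problem.} Set
\[
h(t,x):=f[\bar\mu_t](t,u_t,\Phi^{\bar u}_{(0,t)}(x))-f[\bar\mu_t](t,\bar u_t,\Phi^{\bar u}_{(0,t)}(x)).
\]
By [A\ref{hyp:dynamic-non-smooth}.iii] and the moment bounds on the flow, $h\in L^1([0,T];L^2_{\mu_0})$. Also set
\[
\ell(t):=L^\epsilon(t,u_t,\bar\mu_t)-L^\epsilon(t,\bar u_t,\bar\mu_t),
\]
which lies in $L^1(0,T)$ because $L^\epsilon$ is Lipschitz in $\mu$ and continuous in $(t,u)$. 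Fix $\eta>0$. Since $L^1([0,T];L^2_{\mu_0})$ is separable, approximate $h$ within $\eta$ by a simple function $h_\eta=\sum_{k=1}^N \mathbf 1_{I_k}(t)\,a_k(x)$, with $a_k\in L^2_{\mu_0}$ and $I_k$ intervals. Lyapunov's theorem then applies to the finitely many real-valued integrable functions
\[
\bigl(\mathbf 1_{[0,s_j]}\mathbf 1_{I_k}\bigr)_{j,k},\qquad \ell,\qquad 1,
\]
where $s_j$ runs over a grid of mesh $\eta$. It yields a set $E$ with $\sL(E)=\rho T$ such that
\[
\int_{E\cap[0,s_j]\cap I_k}dt=\rho\,\sL([0,s_j]\cap I_k),\qquad \int_E\ell=\rho\int_0^T\ell .
\]
The second identity is exactly \eqref{eq:Lyap-L}, because $u^\rho=u$ on $E$. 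The first gives $\sup_s\bigl\|\int_0^s(\mathbf 1_E-\rho)h\bigr\|_{L^2_{\mu_0}}\le C\eta$, with the $\rho$-proportionality coming from both terms of the split $h=h_\eta+(h-h_\eta)$, each weighted by $\mathbf 1_E$ or $\rho$. Choosing $\eta=\eta(\rho)$ tending to $0$ suitably as $\rho\to0$ (e.g.\ redoing the construction with $\eta$ replaced by $\eta\rho$) gives the key estimate
\[
\sup_{s}\Bigl\|\int_0^s(\mathbf 1_{E_\rho}-\rho)h\,dt\Bigr\|_{L^2_{\mu_0}}=o(\rho).
\]
The $L^1$ error $h-h_\eta$ is controlled uniformly because it is integrated against functions bounded by $1$.

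\textbf{Step 2: expansion of the flow.} Write $\delta^\rho:=\Phi^\rho_{(0,\cdot)}-\Phi^{\bar u}_{(0,\cdot)}-\rho\Psi$. Subtracting \eqref{eq:def-flow} for $u^\rho$ and $\bar u$, and splitting $f[\mu^\rho_t](t,u^\rho_t,\Phi^\rho)-f[\bar\mu_t](t,\bar u_t,\bar\Phi)$ into a control jump and a state/measure variation, we obtain the following three terms.
\begin{enumerate}[(a)]
\item A control jump term $\mathbf 1_{E_\rho}h$ plus $\mathbf 1_{E_\rho}$ times a difference controlled by $\ell_f(t)\bigl(|\Phi^\rho-\bar\Phi|+d_\mcW(\mu^\rho_t,\bar\mu_t)\bigr)$. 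The latter is $O(\rho)\cdot\int_{E_\rho}\ell_f=o(\rho)$ by absolute continuity of the integral.
\item A first-order term in $x$ and $\mu$. This is handled by the differentiability [A\ref{hyp:dynamic-non-smooth}.iv], applied along the coupling $(\bar\Phi,\Phi^\rho)_\#\mu_0$, which is the Lagrangian coupling of $\bar\mu_t$ and $\mu^\rho_t$.
\item The Taylor remainders.
\end{enumerate}
Grönwall's inequality then gives $\|\delta^\rho\|_{C([0,T];L^2_{\mu_0})}=o(\rho)$ once (a) uses Step 1 and the remainders in (c) are shown to be $o(\rho)$ in $L^1([0,T];L^2_{\mu_0})$. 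The a priori bound $\|\Phi^\rho-\bar\Phi\|=O(\rho)$ needed here follows from the key estimate of Step 1 together with Grönwall. Uniqueness of $\Psi$ in \eqref{eq: linearized equation} follows from linear Grönwall.

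\textbf{Main obstacle.} The hard part is (c) without compact support. The remainder is $\int_0^1\bigl(D_xf(\text{intermediate})-D_xf(\bar\Phi)\bigr)(\Phi^\rho-\bar\Phi)\,d\theta$, and the analogous expression holds for $\nabla_\mu f$. The derivatives are bounded by $\ell_f(t)$, and they are continuous uniformly only on bounded sets [A\ref{hyp:dynamic-non-smooth}.v]. I would split $\R^d$ into $\{|\bar\Phi(t,x)|\le R\}$ and its complement.
\begin{enumerate}[(i)]
\item On the bounded part, uniform continuity, combined with the fact that $|\Phi^\rho-\bar\Phi|$ is small outside a set of small $\mu_0$-measure (Chebyshev), gives a bound $o(1)\cdot O(\rho)$.
\item On the tail, the factor $2\ell_f$ times $\|(\Phi^\rho-\bar\Phi)\mathbf 1_{\{|\bar\Phi|>R\}}\|_{L^2_{\mu_0}}$ must be shown to be $\le \omega(R)\rho$, with $\omega(R)\to0$ as $R\to\infty$ uniformly in $\rho$. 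I would obtain this by normalizing $(\Phi^\rho-\bar\Phi)/\rho$ and proving that it converges to $\Psi$ strongly in $L^2_{\mu_0}$, or at least is $L^2$-uniformly integrable, via a bootstrap of the same Grönwall argument.
\end{enumerate}
Letting $\rho\to0$ and then $R\to\infty$ closes the argument.
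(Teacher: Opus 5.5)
Your proposal follows the paper's proof. You approximate $h$ (the paper's $\Delta f$) by a simple function in $L^1(0,T;L^2_{\mu_0})$, apply Lyapunov's theorem on a partition, derive the estimate $\sup_t\|\int_0^t(\mathbf{1}_{E_\rho}-\rho)h\,ds\|_{L^2_{\mu_0}}=o(\rho)$, and conclude by Gr\"onwall. The paper applies Lyapunov interval by interval to $(1,\ell,h_\eta)$, with values in $\R^2\times H$ for a finite-dimensional $H\subset L^2_{\mu_0}$, whereas you apply it once to scalar indicators. The two are equivalent, provided your grid satisfies $\int_{s_j}^{s_{j+1}}\|h_\eta\|_{L^2_{\mu_0}}\,dt\le\eta$ rather than merely having mesh $\eta$.

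The paper does not spell out the Taylor-remainder step without compact support that you flag, and your plan for it works with one correction. In (i), Chebyshev alone does not make the $L^2_{\mu_0}$-mass of $\Phi^\rho-\Phi^{\bar u}$ on the bad set $o(\rho)$, so (i) also needs the uniform integrability of (ii). You can get that from a Gr\"onwall estimate pointwise in $x$. Alternatively, bypass it by writing $\Phi^\rho-\Phi^{\bar u}=\rho\Psi+\delta^\rho$ inside the remainder, absorbing the $\delta^\rho$-part into Gr\"onwall, and treating the $\rho\Psi$-part by dominated convergence.
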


\begin{remark}
    The above claim also holds for $L$ instead of $L^\epsilon$, and the only requirement for $L$ is that it must be $\sL$-measurable on $t$ and continuous in $(u, \mu)$.
\end{remark}

\begin{proposition}\label{prop: taylor expansion costs}
    Let $\bar{\mu}$ be the unique solution of \eqref{eq: continuity eq} driven by a control $\bar{u} \in \mcU_{ad}$ and let $u^\rho \in \mcU_{ad}$ and $\mu^\rho$ as in Proposition \ref{prop:diffuse-linearization}. Under [A\ref{hyp:dynamic-non-smooth}]-[A\ref{hyp: assumptions lagrangian}], for every $\epsilon, \rho > 0$ it holds that
    \begin{align*}
        \mcJ^\epsilon(\mu^\rho_T) = \mcJ^\epsilon(\bar{\mu}_T) + \rho \int_{\R^d} \left\langle \nabla_\mu \mcJ^\epsilon(\bar{\mu}_T) \left(\Phi^{\bar{u}}_{(0,T)}(x) \right), \Psi(T,x) \right\rangle d \mu_0(x) + o(\rho)
    \end{align*}
    and
    \begin{align*}        
        \int_0^T L^\epsilon(t, u^\rho_t, \mu^\rho_t) \, dt &= \int_0^T L^\epsilon(t, \bar{u}_t, \bar{\mu}_t) \, dt +\rho \bigg[ \int_0^T \bigg( L^\epsilon(t, u_t, \bar{\mu}_t) - L^\epsilon (t, \bar{u}_t, \bar{\mu}_t) \bigg) \, dt  \bigg]\\
        &+ \rho \int_0^T \int_{\R^d} \bigg( \left \langle \nabla_\mu L^\epsilon (t, \bar{u}_t, \bar{\mu}_t)\left( \Phi^{\bar{u}}_{(0,t)}(x) \right), \Psi(t,x) \right \rangle \, d\mu_0(x) \bigg) dt + o (\rho)
    \end{align*}
    where $\Psi$ solves \eqref{eq: linearized equation}.
\end{proposition}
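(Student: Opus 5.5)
We work through the Lions lift, using that $\mcJ^\epsilon$ is $C^{1,1}$ (Proposition \ref{prop: LL on Wasserstein}). Fix $\epsilon$ and write $X^\rho:=\Phi^\rho_{(0,T)}$, $\bar X:=\Phi^{\bar u}_{(0,T)}$. Viewing $\mu_0$ as the law of a random variable $\xi\in L^2_\mbP$, we have $(\mu^\rho_T)=(X^\rho\circ\xi)_\#\mbP$ and $\bar\mu_T=(\bar X\circ \xi)_\#\mbP$. Alternatively, we work intrinsically with the plan $\gamma^\rho:=(\bar X, X^\rho)_\#\mu_0\in\Gamma(\bar\mu_T,\mu^\rho_T)$, whose displacement satisfies $\norm{\pi^2-\pi^1}_{\gamma^\rho}=\norm{X^\rho-\bar X}_{\mu_0}=O(\rho)$ by Proposition \ref{prop:diffuse-linearization}.

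\textbf{Terminal cost.} Since $\hat{\mcJ^\epsilon}$ is $C^{1,1}$ on $L^2_\mbP$, a second-order Taylor bound gives
\[
\mcJ^\epsilon(\mu^\rho_T)=\mcJ^\epsilon(\bar\mu_T)+\int\inner{\nabla_\mu\mcJ^\epsilon(\bar\mu_T)(\bar X(x)),X^\rho(x)-\bar X(x)}d\mu_0(x)+O\big(\norm{X^\rho-\bar X}_{\mu_0}^2\big).
\]
Here we use Theorem \ref{thm:gangbo-tudorascu} to identify $\nabla\hat{\mcJ^\epsilon}(\bar X\circ\xi)=\nabla_\mu\mcJ^\epsilon(\bar\mu_T)\circ\bar X\circ\xi$. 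Substituting the expansion $X^\rho-\bar X=\rho\Psi(T)+o(\rho)$ in $L^2_{\mu_0}$, and using the bound $\norm{\nabla_\mu\mcJ^\epsilon}\le\Lip(\mcJ)$ from Cauchy--Schwarz, turns the linear term into $\rho\int\inner{\cdot,\Psi(T,x)}d\mu_0+o(\rho)$. The quadratic remainder is $O(\rho^2)$.

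\textbf{Running cost.} We split
\[
L^\epsilon(t,u^\rho_t,\mu^\rho_t)-L^\epsilon(t,\bar u_t,\bar\mu_t)=\big[L^\epsilon(t,u^\rho_t,\mu^\rho_t)-L^\epsilon(t,u^\rho_t,\bar\mu_t)\big]+\big[L^\epsilon(t,u^\rho_t,\bar\mu_t)-L^\epsilon(t,\bar u_t,\bar\mu_t)\big].
\]
The second bracket vanishes off $E_\rho$, so its integral is given exactly by \eqref{eq:Lyap-L}.

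For the first bracket, the same $C^{1,1}$ Taylor argument applies pointwise in $t$. The $C^{1,1}$ constant of $L^\epsilon(t,u,\cdot)$ depends only on $\epsilon$ and the uniform Lipschitz constant $C$ (Lasry--Lions gives $\Lip\nabla\le$ const$/\epsilon$), so the quadratic remainder is $O(\rho^2)$ uniformly in $t$. This yields
\[
\int\inner{\nabla_\mu L^\epsilon(t,u^\rho_t,\bar\mu_t)(\Phi^{\bar u}_{(0,t)}),\rho\Psi(t)}d\mu_0+o(\rho),
\]
where the $o(\rho)$ is uniform in $t$ because the flow expansion holds in $C([0,T];L^2_{\mu_0})$.

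\textbf{Main obstacle.} The remaining step is to replace $u^\rho_t$ by $\bar u_t$ inside the gradient. These differ only on $E_\rho$, and there the integrand is bounded by $C\norm{\Psi}_{C(L^2)}\rho$ thanks to the gradient bound $\le C$. Hence the replacement error is at most $\sL(E_\rho)\cdot C\rho=O(\rho^2)$. Summing the two brackets gives the stated formula.

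The only delicate points are the uniformity of the $o(\rho)$ terms in $t$ and the joint measurability of the integrands; both follow from the global Lipschitz reduction made earlier and the uniform $C^{1,1}$ bounds of the regularization.
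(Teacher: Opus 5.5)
Your proof is correct and follows the same route as the paper: you expand the regularized functionals along $\Phi^\rho_{(0,\cdot)}=\Phi^{\bar u}_{(0,\cdot)}+\rho\Psi+o(\rho)$ and handle the control variation through the Lyapunov identity \eqref{eq:Lyap-L}. The paper only invokes differentiability of $\mcJ^\epsilon$ and $L^\epsilon$ in the measure variable, whereas your uniform $C^{1,1}$ quadratic remainder and the $O(\rho^2)$ replacement of $u^\rho_t$ by $\bar u_t$ on $E_\rho$ make explicit the uniformity in $t$ that the paper's two-line argument leaves implicit.
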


\subsection{Proof of Theorem~\ref{thm: non smooth pmp}}

 Let $d_C$ denote the distance to the set $C := \{z\in C([0,T]): z(t)\le 0 \ \forall t\in[0,T]\}$ with the norm

$|\cdot|_{C([0,T])}$, i.e.,
\[
d_C(h):=\inf_{z\in C}|h-z|_{C([0,T])}.
\]
We shall use a smooth equivalent norm on \(C([0,T])\). More precisely, since
\(C([0,T])\) is separable, we fix an equivalent norm
\(|\cdot|_{C([0,T])}\) such that \(C([0,T])\) and its dual
\(\mathcal M([0,T])\), endowed with the corresponding dual norm, are strictly
convex. 
Then \(d_C\) is convex and \(1\)-Lipschitz. Moreover, \(d_C\) is
Gâteaux differentiable on \(C([0,T])\setminus C\). Equivalently, if
\(h\notin C\), then \(\partial d_C(h)\) is a singleton, and we denote its
unique element by
\[
\nabla d_C(h)\in\mathcal M([0,T]).
\]
In this case,
\(
|\nabla d_C(h)|_{\mathcal M([0,T])}=1
\).

\medskip

{\bf Step 1 }(Approximation).

For every $\epsilon > 0$ set
\[
\mcU_{ad}^\epsilon
:=
\left\{
u\in\mathcal U_\delta:\ d_U(u(t), u^*(t))\le \frac{1}{\epsilon^{1/8}}
\ \text{for a.e. }t\in[0,T]
\right\}.
\]
We endow \(\mcU_{ad}^\epsilon\) with the Ekeland distance
\[
d_E(u_1,u_2)
:=
\mathscr L\big(\{t\in[0,T]:u_1(t)\neq u_2(t)\}\big).
\]
By definition,
\begin{align}\label{ineq: ekeland and L2 distance}
    d_{L^2}(u_2, u_1) \leq \frac{2}{\epsilon^{1/8}} d_E^{1/2}(u_2, u_1), \qquad u_1, u_2 \in \mcU^\epsilon_{ad}.
\end{align}
Consequently, \((\mcU^\epsilon_{ad},d_E)\) a complete metric space (see \cite[Lemma 7.2]{ekeland1974variational}, $U$ is not required to be compact).

\smallskip

Let $\mcJ^\varepsilon$, $g^\varepsilon$, and
$L^\varepsilon(t,u,\cdot)$ be the Lasry-Lions regularizations on
$\mathscr P_2(\R^d)$ of $\mcJ$, $g$, and $L(t,u,\cdot)$,
respectively, and set
\[
J^\varepsilon(u)
:=
\mcJ^\varepsilon(\mu_T^u)
+
\int_0^T
L^\varepsilon(t,u_t,\mu_t^u)\,dt.
\]

Define $\mathscr{J}_\epsilon: \mcU^\epsilon_{ad} \to \R$ as
\[ \mathscr{J}_\epsilon(u) :=
\left(
\left([J^\epsilon(u)-J(u^*)+\epsilon]^+\right)^2
+
d_C(g^\epsilon \circ \mu^u)^2
\right)^{1/2}.
\]
The stability \eqref{eq:stability-continuity-equation}, Proposition \ref{prop: LL on Wasserstein} and \eqref{ineq: ekeland and L2 distance} yield that $\mathscr{J}_\epsilon$ is continuous on $\mcU^\epsilon_{ad}$.

\hfill

{\bf Step 2} (Ekeland principle).

\hfill

Under Assumption \ref{hyp:cost-non-smooth}, a direct application of the Lipschitz continuity of $d_C$ together with Proposition~\ref{prop: LL on Wasserstein} yields that there is $C > 0$, depending only on $\mcJ,g,L$ such that
\begin{align}\label{ineq:epsilon minimizer}
    0 \leq \mathscr{J}_\epsilon(u^*) \leq C \epsilon.
\end{align}

In particular, $u^*$ is a $C\epsilon$ minimizer of $\mathscr{J}_\epsilon$ over $\mcU^\epsilon_{ad}$. Applying Ekeland's variational principle we deduce the existence of $u^\epsilon \in \mcU^\epsilon_{ad}$ with $d_E(u^{\epsilon},u^*)\le \sqrt{C \epsilon}$ such that
\begin{align}\label{eq: ekeland variational ineq}
    u^\epsilon \in \operatorname{argmin} \bigg[ u \in \mcU^{\epsilon}_{ad} \mapsto \mathscr{J}_\epsilon(u) + \sqrt{C \epsilon} d_E(u, u^\epsilon) \bigg]
\end{align}

Moreover, \eqref{ineq: ekeland and L2 distance} yields that
$$d_{L^2}(u^\epsilon, u^*)\leq \frac{2}{\epsilon^{1/8}} d_E^{1/2}(u^\epsilon, u^*) \leq 2 C^{1/4} \epsilon^{1/4}.$$
In particular, $u^\epsilon \to u^*$ in $L^2((0,T);U)$.

\hfill

{\bf Step 3} (Approximated optimality conditions).

\hfill

In this step, we prove that for every $v \in \mcU_{ad},$ it holds that
\begin{equation}\label{eq: taylor expansion epsilon cost}
    \begin{aligned}
        0 &\leq \alpha^\epsilon \bigg[ \int_{\R^d}\Big\langle
        \nabla_\mu\mcJ^\epsilon(\mu^\epsilon_T)
        \big(\Phi^\epsilon_{(0,T)}(x)\big),
        \Psi_v^\epsilon(T,x)
        \Big\rangle\, d\mu_0(x)\\
        &+
        \int_0^T \Big( L^{\epsilon}(t,v_\epsilon(t),\mu^\epsilon_t) - L^{\epsilon}(t,u^\epsilon_t,\mu^\epsilon_t) \Big)\,dt\\
        &+
        \int_0^T\int_{\R^d} \Big\langle \nabla_\mu L^{\epsilon}(t,u^\epsilon_t,\mu^\epsilon_t) \big(\Phi^\epsilon_{(0,t)}(x)\big), \Psi_v^\epsilon(t,x)\Big\rangle\,d\mu_0(x)\,dt \bigg]\\
        &+ \int_{[0,T]} \int_{\R^d}
        \Big\langle
        \nabla_\mu g^\epsilon(\mu^\epsilon_t)
        \big(\Phi^\epsilon_{(0,t)}(x)\big),
        \Psi_v^\epsilon(t,x)
        \Big\rangle\,d\mu_0(x) \lambda^\epsilon(dt)\\
        &+ T\sqrt{C \epsilon},
        \end{aligned}
\end{equation}

where
\[\alpha^\epsilon := \frac{1}{\mathscr{J}_\epsilon(u^\epsilon)} [J^{\epsilon}(u^\epsilon)-J(u^*)+\epsilon]^+,
\qquad
\lambda^\epsilon
:=
\begin{cases}
\dfrac{d_C(g^\epsilon \circ\mu^\epsilon)}
{\mathscr{J}_\epsilon(u^\epsilon)}
\,\nabla d_C(g^\epsilon \circ\mu^\epsilon),
&\text{if } d_C(g^\epsilon \circ\mu^\epsilon)>0,\\[1.1em]
0,
&\text{if } d_C(g^\epsilon \circ \mu^\epsilon)=0.
\end{cases}
  \]
with
\[ \alpha^\epsilon\ge0,\qquad \lambda^\epsilon\in \mcM_+([0,T]), \qquad  |\alpha^\epsilon|^2+ |\lambda^\epsilon|_{\mcM([0,T])}^2=1,
\]
and where $\Phi^\epsilon_{(0,\cdot)}$ is the flow associated to the trajectory $(\mu^\epsilon, u^\epsilon)$ and $\Psi_v^\epsilon$ is the unique solution (c.f. Proposition \ref{prop:well-posedness-adjoint}) of the equation
\begin{align}\label{eq: linearized equation truncated}
    \begin{cases}
        \partial_t\Psi(t,x)
          &= f[\mu^\epsilon_t]\bigl(t,v_\epsilon(t),\Phi^\epsilon_{(0,t)}(x)\bigr) - f[\mu^\epsilon_t]\bigl(t,u^\epsilon_t,\Phi^\epsilon_{(0,t)}(x)\bigr)\\
          &\quad+ D_xf[\mu^\epsilon_t] \bigl(t,u^\epsilon_t,\Phi^\epsilon_{(0,t)}(x)\bigr)\Psi(t,x)\\
          &\quad+ \int_{\R^d} \Big\langle \nabla_\mu f[\mu^\epsilon_t] \bigl(t,u^\epsilon_t,\Phi^\epsilon_{(0,t)}(x)\bigr) \bigl(\Phi^\epsilon_{(0,t)}(y)\bigr), \Psi(t,y)\Big\rangle\,d\mu_0(y)\\
          \Psi(0,x)&=0
    \end{cases}
\end{align}
with
\begin{align}\label{eq: truncation of controls}
    v_\epsilon(t):= \begin{cases}
    v_t &\text{ if }\quad  d_U(v_t, u^*_t) \le \frac{1}{\epsilon^{1/8}}\\
    u^*_t &\text{ if } \quad d_U(v_t, u^*_t)> \frac{1}{\epsilon^{1/8}}
\end{cases}.
\end{align}

To prove it, we argue as follows.

\vspace*{0.5cm}

Fix \(v\in\mcU_{ad}\), then \(v_\varepsilon(t)\in\mcU^\varepsilon_{ad}\). Let $E^\epsilon_\rho \subset [0,T]$ be the set given by Proposition~\ref{prop:diffuse-linearization} with reference control
\(u_t^\epsilon\) and comparison control \(v_\epsilon(t)\), and set

$$u^\epsilon_\rho(t) := \begin{cases}
        u^\epsilon_t \quad \text{ in } [0,T] \setminus E^\epsilon_\rho\\
        v_\epsilon(t) \quad \text{ in } E^\epsilon_\rho
    \end{cases}$$
For $\rho > 0$ small enough, \(u_\rho^\epsilon\in \mcU^\epsilon_{ad}\). Hence, \eqref{eq: ekeland variational ineq} yields that
\begin{align}\label{eq: almost taylor expansion ekeland principle}
    0 \leq  \frac{1}{\rho} \bigg(\mathscr{J}_\epsilon(u_\rho^\epsilon) - \mathscr{J}_\epsilon(u^\epsilon) + \sqrt{C \epsilon}\,
d_E(u_\rho^\epsilon,u^\epsilon) \bigg) \leq \frac{\mathscr{J}_\epsilon(u_\rho^\epsilon) - \mathscr{J}_\epsilon(u^\epsilon)}{\rho} +  T\sqrt{C \epsilon}.
\end{align}

Let
\(\mu_\rho^\epsilon\) be the trajectory associated with
\(u_\rho^\epsilon\). Applying Proposition \ref{prop: taylor expansion costs} we deduce that
\begin{equation}\label{eq:first-order-J-vareps}
    \begin{aligned}
          J^{\epsilon}(u_\rho^\epsilon) - J^{\epsilon}(u^\epsilon) &= \rho \bigg[\int_0^T \Big( L^{\epsilon}(t,v_\epsilon(t),\mu^\epsilon_t) - L^{\epsilon}(t,u^\epsilon_t,\mu^\epsilon_t) \Big)\,dt +  \int_{\R^d}\Big\langle
    \nabla_\mu\mcJ^\epsilon(\mu^\epsilon_T)
    \big(\Phi^\epsilon_{(0,T)}(x)\big),
    \Psi_v^\epsilon(T,x)
    \Big\rangle\, d\mu_0(x) \bigg]\\
    &+ \rho\int_0^T\int_{\R^d} \Big\langle \nabla_\mu L^{\epsilon}(t,u^\epsilon_t,\mu^\epsilon_t) \big(\Phi^\epsilon_{(0,t)}(x)\big), \Psi_v^\epsilon(t,x)\Big\rangle\,d\mu_0(x)\,dt + o(\rho)
    \end{aligned}.
\end{equation}
Similarly, using the same notation of \eqref{eq: definition g sub u} with $g^\epsilon$ instead, it holds that

$$g^\epsilon \circ \mu^\epsilon_\rho - g^\epsilon \circ \mu^\epsilon = \bigg( t \mapsto \rho\int_{\R^d} \left(\inner{\nabla_\mu g^\epsilon(\mu^\epsilon_t)(\Phi^\epsilon_{(0,t)}(x)), \Psi^\epsilon_v(t,x)} \right) \, d \mu_0(x) \bigg) + o(\rho) \quad \text{ in } \quad C([0,T]).$$

Set
$$A_\epsilon := [J^\epsilon(u^\epsilon) - J(u^*) + \epsilon]^+, \qquad B_\epsilon := d_C(g^\epsilon \circ \mu^\epsilon)$$

By definition, $\mathscr{J}_\epsilon(u^\epsilon) = (A_\epsilon^2 + B_\epsilon^2)^{1/2}$, and we claim that $\mathscr{J}_\epsilon(u^\epsilon) > 0$. In fact, if not, $A_\epsilon = B_\epsilon = 0$. The equality $B_\epsilon = 0$ together with Proposition \ref{prop: LL on Wasserstein} implies that
$$g(\mu^\epsilon_t) \leq g^\epsilon(\mu^\epsilon_t) \leq 0, \qquad \forall t \in [0,T],$$
meaning that $u^\epsilon$ is feasible for \eqref{prob:P}. Since $A_\epsilon = 0$, again by Proposition \ref{prop: LL on Wasserstein} we deduce that
$$J(u^\epsilon) \leq J^\epsilon(u^\epsilon) \leq J(u^*) - \epsilon,$$
which is absurd because $u^\epsilon$ is feasible for \eqref{prob:P} and $u^*$ is a minimizer.
\smallskip

Since
\(\mathscr{J}_\epsilon(u^\epsilon)>0\), we have that
\(\alpha^\epsilon\ge0\). If $B_\epsilon>0$, then \(g^\epsilon \circ \mu^\epsilon\notin C\), so \(d_C\) is Gâteaux differentiable at \(g^\epsilon \circ \mu^\epsilon\) and
\[
\nabla d_C(g^\epsilon \circ \mu^\epsilon) \in \sP([0,T]).
\]

If $B_\epsilon = 0$, we set $\lambda^\epsilon := 0 \in \mcM_+([0,T])$. In any case, we have
\begin{equation}\label{eq:non-triviality-eps}
    |\alpha^\epsilon|^2 + |\lambda^\epsilon|_{\mathcal M([0,T])}^2 = 1.
\end{equation}
Let \( A_\rho := [J^{\epsilon}(u_\rho^\epsilon)-J(u^*)+\epsilon]^+\) and \( B_\rho:=d_C(g^\epsilon \circ \mu_\rho^\epsilon) \).  Since \(\mathscr{J}_\epsilon(u^\epsilon)>0\), the Euclidean norm is differentiable at $(A_\epsilon, B_\epsilon)$ and consequently,
\begin{align*}
\mathscr{J}_\epsilon(u_\rho^\epsilon) -\mathscr{J}_\epsilon(u^\epsilon) = \frac{A_\epsilon}{\mathscr{J}_\epsilon(u^\epsilon)} (A_\rho - A_\epsilon) + \frac{B_\epsilon}{\mathscr{J}_\epsilon(u^\epsilon)}(B_\rho-B_\epsilon) + o(\rho).
\end{align*}

If $A_\epsilon > 0$, then \(J^\epsilon(u^\epsilon)-J(u^*)+ \epsilon >0\) and for \(\rho>0\) sufficiently small,
\begin{equation}\label{eq: expansion A terms}
    \begin{aligned}
       \frac{A_\epsilon}{\mathscr{J}_\epsilon(u^\epsilon)} (A_\rho-A_\epsilon) &= \frac{A_\epsilon}{\mathscr{J}_\epsilon(u^\epsilon)}  \left( J^\epsilon(u_\rho^\epsilon) - J^\epsilon(u^\epsilon) \right)\\
        &= \frac{A_\epsilon}{\mathscr{J}_\epsilon(u^\epsilon)} \rho \bigg[\int_0^T \Big( L^{\epsilon}(t,v_\epsilon(t),\mu^\epsilon_t) - L^{\epsilon}(t,u^\epsilon_t,\mu^\epsilon_t) \Big)\,dt +  \int_{\R^d}\Big\langle
        \nabla_\mu\mcJ^\epsilon(\mu^\epsilon_T)
        \big(\Phi^\epsilon_{(0,T)}(x)\big),
        \Psi_v^\epsilon(T,x)
        \Big\rangle\, d\mu_0(x) \bigg]\\
        &+ \frac{A_\epsilon}{\mathscr{J}_\epsilon(u^\epsilon)} \rho\int_0^T\int_{\R^d} \Big\langle \nabla_\mu L^{\epsilon}(t,u^\epsilon_t,\mu^\epsilon_t) \big(\Phi^\epsilon_{(0,t)}(x)\big), \Psi_v^\epsilon(t,x)\Big\rangle\,d\mu_0(x)\,dt + o(\rho).
    \end{aligned}
\end{equation}
If $A_\epsilon =0$, \eqref{eq: expansion A terms} holds by definition. We now treat the term involving \(B_\rho-B_\epsilon\). If \(B_\epsilon>0\), then \(g^\epsilon \circ \mu^\epsilon\notin C\) and \(d_C\)
is Gâteaux differentiable at that point. Hence, setting
$$\lambda^\epsilon := \frac{B_\epsilon}{\mathscr{J}_\epsilon(u^\epsilon)} \nabla d_C(g^\epsilon \circ \mu^\epsilon)$$
we see that
\begin{align}\label{eq: expansion B terms}
    \frac{B_\epsilon}{\mathscr{J}_\epsilon(u^\epsilon)}\left( B_\rho - B_\epsilon \right) = \rho \int_{[0,T]} \bigg[ \int_{\R^d} \left(\inner{\nabla_\mu g^\epsilon(\mu^\epsilon_t)(x), \Psi^\epsilon_v(t,x)} \right) \, d \mu_0(x) \bigg] d \lambda^\epsilon(t) + o(\rho).
\end{align}
$$$$
If $B_\epsilon = 0$, \eqref{eq: expansion B terms} holds by definition since $\lambda^\epsilon = 0$. Combining \eqref{eq: expansion A terms}, \eqref{eq: expansion B terms} and \eqref{eq: almost taylor expansion ekeland principle} we deduce \eqref{eq: taylor expansion epsilon cost}.

\hfill

{\bf Step 4} (Convergences)

\hfill

In this step, we pass to the limit in \eqref{eq: almost taylor expansion ekeland principle} as $\epsilon \searrow 0$.

\paragraph{Preliminary convergences.}
\hfill

Let $v \in \mcU_{ad}$ and let $v_\epsilon$ be the truncation of \eqref{eq: truncation of controls}. By the dominated convergence theorem, we know that $d_{L^2}(v_\varepsilon,v)\longrightarrow0$. Let $\Psi^*_v$ the unique solution of

\begin{align*}
    \begin{cases}
        \partial_t\Psi(t,x) &= f[\mu_t^*]\bigl(t,v_t,\Phi^*_{(0,t)}(x)\bigr)-
        f[\mu_t^*]\bigl(t,u_t^*,\Phi^*_{(0,t)}(x)\bigr)\\
        &\quad+ D_x f[\mu_t^*]\bigl(t,u_t^*,\Phi^*_{(0,t)}(x)\bigr) \Psi(t,x)\\
        &\quad+
        \int_{\R^d}
        \Big\langle
        \nabla_\mu f[\mu_t^*]
        \bigl(t,u_t^*,\Phi^*_{(0,t)}(x)\bigr)
        \bigl(\Phi^*_{(0,t)}(y)\bigr),
        \Psi_v^*(t,y)
        \Big\rangle\,d\mu_0(y),\\
        \Psi(0,x)&=0.
    \end{cases}
\end{align*}
and let $\Psi^\epsilon_v$ the solution of \eqref{eq: linearized equation truncated}. By a classical Grönwall argument, using [A\ref{hyp:dynamic-non-smooth}] we have that $\Psi_v^\epsilon \to \Psi_v^*$ in $C([0,T];L^2_{\mu_0})$. Similarly, by \eqref{eq: stability flow} we have \(\Phi^\epsilon_{(0,\cdot)}
\longrightarrow
\Phi^*_{(0,\cdot)}\) in \(
C\bigl([0,T];L^2_{\mu_0}\bigr)
\).

For every \(s,t\in[0,T]\), set
\[\gamma_{s,t}^\epsilon := \big(\Phi^*_{(0,s)},\Phi^\epsilon_{(0,t)}\big)_\#\mu_0 \in \Gamma(\mu_s^*,\mu_t^\epsilon).
\]
In particular, if \(t_\epsilon\to t\), then
\begin{align}\label{eq: C2 goes to 0 for gamma epsilon t}
    \|\pi^2-\pi^1\|_{\gamma^\epsilon_{t,t_\epsilon}}
    \xrightarrow[\epsilon\searrow0]{}0.
\end{align}
Since the couplings \(\gamma^\epsilon_{s,t}\) are concentrated on a graph, for every $\Xi \in L^2_{\mu^\epsilon_t}$ it holds that
\begin{align}\label{eq: parallel transport induced by the flow}
    (\PP_{\gamma^\epsilon_{s,t}}\Xi)
    \big(\Phi^*_{(0,s)}(x)\big)
    =
    \Xi\big(\Phi^\epsilon_{(0,t)}(x)\big)
    \qquad \mu_0\text{-a.e. }x.
\end{align}

Let be multipliers $\alpha^\epsilon \geq 0, \lambda^\epsilon \in \mcM_+([0,T])$ of the previous step and choose a subsequence such that \( \alpha^\epsilon\to\alpha^*\) for some \(\alpha^*\in[0,1]\), and \(\lambda^\epsilon \rightharpoonup\lambda^*\) weakly-$\star$ for some \(\lambda^*\in\mathcal M_+([0,T])\).

\paragraph{Convergence of the running cost term.}

\hfill

Assumption \ref{hyp:cost-non-smooth}, Proposition \ref{prop: LL on Wasserstein}, the strong convergence $d_{L^2}(v_\varepsilon,v)\to0$,
$d_{L^2}(u^\varepsilon,u^*)\to0$,
and
$\sup_{t\in[0,T]}d_\mcW(\mu_t^\varepsilon,\mu_t^*)\to0$ together with the dominated convergence theorem imply that
$$\alpha^\epsilon
\int_0^T \Big(L^{\epsilon}(t,v_\epsilon(t),\mu_t^\epsilon) - L^{\epsilon}(t,u_t^\epsilon,\mu_t^\epsilon) \Big)\,dt \xrightarrow[\epsilon \searrow 0]{} \alpha^* \int_0^T \Big( L(t,v_t,\mu_t^*) - L(t,u_t^*,\mu_t^*) \Big)\,dt . $$

\paragraph{Convergence of the terminal cost term.}

\hfill

Let \(\Xi_\mcJ^\varepsilon
:=
\nabla_\mu\mcJ^\varepsilon(\mu_T^\varepsilon)\) and \(\widetilde\Xi_\mcJ^\epsilon
:=
\PP_{\gamma^\epsilon_{T,T}}\Xi_\mcJ^\epsilon\). By \eqref{ineq: bounds-pullback} and Proposition \ref{prop: LL on Wasserstein}, $\{\widetilde\Xi_\mcJ^\epsilon\}_\epsilon$ is bounded in $L^2_{\mu^*_T}$. Up to subsequence, we have that there is $\zeta^*_\mcJ \in L^2_{\mu^*_T}$ such that $\widetilde\Xi_\mcJ^\epsilon \rightharpoonup \zeta^*_\mcJ$ weakly in $L^2_{\mu^*_T}$. By Proposition \ref{prop: Clarke subdiff and Lasry Lions}, we deduce that
\begin{align}\label{eq: inclusion clarke subdiff terminal cost}
    \zeta_\mcJ^* \in \partial_C\mcJ(\mu_T^*).
\end{align}

Moreover, by \eqref{eq: parallel transport induced by the flow} we have that
$$\widetilde\Xi^\epsilon_\mcJ(\Phi^*_{(0,T)}(x)) = \Xi^\epsilon_\mcJ(\Phi^\epsilon_{(0,T)}(x)), \qquad \mu_0\text{-a.e. } x.$$
In particular,
$$\int_{\R^d}\Big\langle \nabla_\mu\mcJ^\epsilon(\mu_T^\epsilon)\big(\Phi^\epsilon_{(0,T)}(x)\big), \Psi_v^\epsilon(T,x)\Big\rangle\,d\mu_0(x)
= \int_{\R^d}\Big\langle \widetilde\Xi_\mcJ^\epsilon \big(\Phi^*_{(0,T)}(x)\big), \Psi_v^\epsilon(T,x)\Big\rangle\,d\mu_0(x).$$

By the strong convergence $\Psi^\epsilon_v(T, \cdot) \to \Psi_v^*(T,\cdot)$ and the weak convergence \(\widetilde\Xi_\mcJ^\varepsilon\circ\Phi^*_{(0,T)} \rightharpoonup \zeta_\mcJ^*\circ\Phi^*_{(0,T)}\) we deduce that
\begin{align*}
    &\alpha^\epsilon \int_{\R^d} \Big\langle \nabla_\mu\mcJ^\epsilon(\mu_T^\epsilon) \big(\Phi^\epsilon_{(0,T)}(x)\big),\Psi_v^\epsilon(T,x) \Big\rangle\,d\mu_0(x) \xrightarrow[\epsilon \searrow 0]{} \alpha^* \int_{\R^d} \Big\langle \zeta_\mcJ^* \big(\Phi^*_{(0,T)}(x)\big), \Psi_v^*(T,x) \Big\rangle\,d\mu_0(x).
\end{align*}

\paragraph{Derivative of the constraint term.}

\hfill

Let
\( \Xi^\epsilon_g(t) := \nabla_\mu g^\epsilon(\mu^\epsilon_t)\) and $\widetilde\Xi^\epsilon_g(t) := \PP_{\gamma^\epsilon_{t,t}} \Xi^\epsilon_g(t) \circ \Phi^*_{(0,t)} \in L^2_{\mu_0}$. By Proposition \ref{prop: LL on Wasserstein} there is \(C_g>0\) such that
\[
\|\widetilde\Xi^\epsilon_g(t)\|_{L^2_{\mu_0}}\le C_g
\qquad
\forall t\in[0,T].
\]
Let $B^w_{C_g}$ be the closed ball in $L^2_{\mu_0}$ with radius $C_g$ and define the family of measures
$$\Lambda^\epsilon_g := (\id, \widetilde\Xi^\epsilon_g)_\# \lambda^\epsilon \in \mcM_+([0,T] \times B^w_{C_g}).$$
Since $\{\lambda^\epsilon\}_\epsilon$ is bounded and $[0,T] \times B^w_{C_g}$ compact, there is $\Lambda^*_g$ such that $\Lambda^\epsilon_g \rightharpoonup^\star \Lambda^*_g$ in duality with $C([0,T] \times B^w_{C_g})$. Since $\lambda^\epsilon \rightharpoonup^\star \lambda^*$, the first marginal of $\Lambda^*_g$ is $\lambda^*$ and $\Lambda^*_g$ disintegrates as
$$\Lambda^*_g(dt, d\beta) = \lambda^*(dt) \sigma^g_t(d\beta)$$
where \((\sigma^g_t)_t\) is a \(\lambda^*\)-a.e. defined measurable family of
probability measures on \(B_{C_g}^w\). We claim that for $\lambda^*$-a.e. $t$,
\begin{align}\label{eq: support sigma clarke subdifferentiald constrained term}
    \supp \sigma^g_t \subset \{ \Xi \circ \Phi^*_{(0,t)} : \Xi \in \partial_C g(\mu^*_t) \}.
\end{align}
In fact, by \cite[Proposition 5.1.8]{ambrosioGradientFlowsMetric2005}, for $\lambda^*-$a.e. $t$ and $\sigma_t^g$-a.e. $\beta$ there is a sequence $(t_{\epsilon_n}, \beta_{\epsilon_n}) \in \supp \Lambda^{\epsilon_n}_g$ such that
$$t_{\epsilon_n} \to t, \qquad \beta_{\epsilon_n} \rightharpoonup \beta.$$
Since the measures $\Lambda^{\epsilon_n}_g$ are concentrated on a graph, we can assume without loss of generality that
$$\beta_{\epsilon_n} \circ \Phi^*_{(t,0)} = \widetilde\Xi^{\epsilon_n}_g(t_{\epsilon_n}) \circ \Phi^*_{(t,0)} = \PP_{\gamma^{\epsilon_n}_{t,t_{\epsilon_n}}} \nabla_\mu g^{\epsilon_n}(\mu^{\epsilon_n}_{t_{\epsilon_n}}) \in L^2_{\mu^*_t}.$$

Hence, Proposition \ref{prop: Clarke subdiff and Lasry Lions} we deduce that $\beta \circ \Phi^*_{(t,0)} \in \partial_C g(\mu^*_t)$ and \eqref{eq: support sigma clarke subdifferentiald constrained term} holds. Now, for $\lambda^*$-a.e. $t$ define
\begin{align}\label{eq: definition Xi tilde g}
    \widetilde\Xi^*_g(t):= \int_{B^w_{C_g}} \beta d \sigma^g_t(\beta).
\end{align}

By \eqref{eq: support sigma clarke subdifferentiald constrained term} and the convexity and closedness of the Clarke subdifferential, it holds that
$$\widetilde \Xi^*_g(t) \circ \Phi^*_{(t,0)} \in \partial_C g(\mu^*_t).$$

By the weak convergence of $\Lambda^\epsilon_g \rightharpoonup^\star \Lambda^*_g $ we deduce that for any $\Theta \in C([0,T]; L^2_{\mu_0})$,
\begin{align*}
    \int_{[0,T]} \left\langle \widetilde\Xi_g^*(t),\Theta(t) \right\rangle_{L^2_{\mu_0}}\,d \lambda^*(t) &= \int_{[0,T] \times B^w_{C_g}} \inner{\beta, \Theta(t)}_{L^2_{\mu_0}} d \Lambda^*_g(t,\beta)\\
    &= \lim_{\epsilon \searrow 0} \int_{[0,T] \times B^w_{C_g}}  \langle \beta,\Theta(t)\rangle_{L^2_{\mu_0}} \,d\Lambda_g^\epsilon(t,\beta)\\
    &=  \lim_{\epsilon \searrow 0} \int_{[0,T]} \langle \widetilde\Xi^\epsilon_g(t),\Theta(t)\rangle_{L^2_{\mu_0}} \,d \lambda^\epsilon(t).
\end{align*}

Setting $\Theta = \Psi^*_v$ and using the strong convergence $\Psi^{\epsilon}_v \to \Psi^*_v$ in $C([0,T]; L^2_{\mu_0})$ we deduce that

\begin{align*}
   \lim_{\epsilon \searrow 0} \int_{[0,T]} \int_{\R^d} \Big\langle \nabla_\mu g^\epsilon(\mu^\epsilon_t) \big(\Phi^\epsilon_{(0,t)}(x)\big), \Psi_v^\epsilon(t,x) \Big\rangle\,d\mu_0(x) \lambda^\epsilon(dt) &= \lim_{\epsilon \searrow 0} \int_{[0,T]} \inner{\widetilde\Xi^\epsilon_g(t), \Psi^\epsilon_v(t)}_{L^2_{\mu_0}} d \lambda^\epsilon(t)\\
   &=     \int_{[0,T]} \left\langle \widetilde\Xi_g^*(t),\Psi^*_v(t) \right\rangle_{L^2_{\mu_0}}\,d \lambda^*(t),
\end{align*}

\paragraph{Convergence of the gradients of the running cost.}
\hfill

Let \( \Xi_L^\epsilon(t) := \nabla_\mu L^{\epsilon}(t,u_t^\epsilon,\mu_t^\epsilon) \in L^2_{\mu_t^\epsilon}(\R^d;\R^d) \) and $\widetilde \Xi_L^\epsilon(t) := \PP_{\gamma^\epsilon_{t,t}} \Xi_L^\epsilon(t) \circ \Phi^*_{(0,t)} \in L^2_{\mu_0}$. By Proposition \ref{prop: LL on Wasserstein} and \eqref{ineq: bounds-pullback} there is $C_L>0$ such that
$$\forall \epsilon > 0, \qquad \sup_{t\in[0,T]}\norm{\widetilde\Xi_L^\epsilon(t)}_{L^2_{\mu_0}} \leq C_L.$$

Denote by 
\(B^{w}_{C_L}\) the closed ball in \(L^2_{\mu_0}\) of radius \(C_L\) endowed with the weak topology, and define the family of measures
\[
\Lambda_L^\epsilon:= (\id,u^\epsilon(\cdot),\widetilde\Xi_L^\epsilon(\cdot))_\#\mathscr L\quad \in\quad \mcM_+\big([0,T]\times U\times B_{C_L}^w\big)
\]

Up to subsequence, we can suppose that without loss of generality that $u^\epsilon \to u^*$ for almost every $t \in [0,T]$. In particular, $(\id,u^\varepsilon)_\#\mathscr L
\rightharpoonup
(\id,u^*)_\#\mathscr L.$ narrowly. Moreover, since $B^w_{C_L}$ is a compact metric space, the sequence $\left(\widetilde \Xi^\epsilon_L \right)_\# \mathscr{L}$ is relatively compact in $\sP_2(B^w_{C_L})$. Consequently, \cite[Lemma 5.2.2]{ambrosioGradientFlowsMetric2005} provides that there is a measure $\Lambda^*_L \in \mcM_+([0,T] \times U \times B^w_{C_L})$ such that
$$
\Lambda_L^\epsilon \rightharpoonup^\star \Lambda_L^* \qquad \text{ weakly in } \mcM_+([0,T] \times U \times B^w_{C_L}).
$$
Moreover, since $(\id, u^\epsilon)_\# \sL \rightharpoonup (\id, u^*)_\# \sL$, we have that $\Lambda^*_L$ disintegrates as
\begin{equation}\label{eq:proof-diisintegration-Lambda}
      \Lambda_L^*(dt,du,d\beta) = dt\,\delta_{u_t^*}(du)\,\sigma^L_t(d\beta),
\end{equation}
where \((\sigma^L_t)_{t\in[0,T]}\) is a measurable family of probability measures
on \(B_{C_L}^w\). As in the previous step, it holds that
\begin{align}\label{eq: sigma t supported on clarke subdifferential running cost term}
    \supp \sigma^L_t \subset \{ \Xi \circ \Phi^*_{(0,t)} : \Xi \in \partial_C L(t, u^*_t, \cdot)(\mu^*_t) \} \qquad \sL-\text{a.e. } t.
\end{align}

In fact, the disintegration formula
\eqref{eq:proof-diisintegration-Lambda}, for \(dt\)-a.e. \(t\) and for
\(\sigma_t^L\)-a.e. \(\beta\), the point \((t,u_t^*,\beta)\) belongs to
\(\supp\Lambda_L^*\). Hence, \cite[Proposition 5.1.8]{ambrosioGradientFlowsMetric2005} yield that there exist a subsequence $\epsilon_n \searrow 0$ and points $(t_{\epsilon_n}, u_{\epsilon_n}, \beta_{\epsilon_n}) \in \supp( \Lambda^{\epsilon_n}_L)$ such that
$$t_{\epsilon_n} \to t, \qquad u_{\epsilon_n} \to u^*_t, \qquad \beta_{\epsilon_n} \rightharpoonup \beta$$
Since $\Lambda^{\epsilon_n}_L$ is concentrated on a graph, we can suppose without loss of generality that
$$u_{\epsilon_n} = u^{\epsilon_n}(t_{\epsilon_n}), \qquad \beta_{\epsilon_n} = \widetilde\Xi^{\epsilon_n}_L(t_{\epsilon_n}), \qquad \mu_{\epsilon_n} := \mu^{\epsilon_n}_{t_n} .$$
By definition,
$$\widetilde \Xi_L^{\epsilon_n}(t_{\epsilon_n}) = \PP_{\gamma^{\epsilon_n}_{t_{\epsilon_n},t_{\epsilon_n}}} \Xi_L^{\epsilon_n}(t_{\epsilon_n}) \circ \Phi^*_{(0, t_{\epsilon_n})}, \qquad \Xi_L^{\epsilon_n}(t_{\epsilon_n}) = \nabla_\mu L^{\epsilon_n}(t_{\epsilon_n},u_{\epsilon_n},\mu_{\epsilon_n}).$$
On the one hand, if $L(t,u, \cdot)$ is convex along plans for every $(t,u)$, \eqref{eq: C2 goes to 0 for gamma epsilon t} and Proposition~\ref{prop:running-cost-LL-closedness} yield
$$\beta \circ\Phi^*_{(t,0)} \in \partial_C L(t, u^*_t, \cdot)(\mu^*_t).$$
On the other hand, if $L(t,u,\cdot) = L_1(t,u) + L_2(\cdot)$, it holds that
$$L^\epsilon(t,u,\mu) = L_1(t,u) + L_2^\epsilon(\mu).$$
In particular, \eqref{eq: C2 goes to 0 for gamma epsilon t} and Proposition \ref{prop: Clarke subdiff and Lasry Lions} yield that
$$\beta \circ\Phi^*_{(t,0)} \in \partial_C L(t, u^*_t, \cdot)(\mu^*_t).$$
In any case, \eqref{eq: sigma t supported on clarke subdifferential running cost term} holds. For $\sL$-a.e. $t$, define
\begin{align*}
    \widetilde \Xi^*_L(t) :=\int_{B^w_{C_L}} \beta d \sigma^L_t(\beta).
\end{align*}

Given that the Clarke subdifferential is closed and convex, by \eqref{eq: sigma t supported on clarke subdifferential running cost term} it holds that
\begin{align}\label{eq: clarke subdifferential inclusion running cost}
    \widetilde \Xi^*_L(t) \circ \Phi^*_{(t,0)} \in \partial_C L(t, u^*_t, \cdot)(\mu^*_t) \qquad \sL-\text{a.e. } t.
\end{align}
Moreover, by the weak convergence $\Lambda^\epsilon_L \rightharpoonup^\star \Lambda^*_L$, for any
\(\Theta\in C([0,T];L^2_{\mu_0}(\R^d;\R^d))\) it holds that
\begin{align*}
    \int_0^T \left\langle \widetilde\Xi_L^*(t),\Theta(t) \right\rangle_{L^2_{\mu_0}}\,dt& = \int \inner{\beta, \Theta(t) }_{L^2_{\mu_0}} d \Lambda^*_L(t,u,\beta)\\
    &= \lim_{\epsilon \searrow 0} \int \langle \beta,\Theta(t)\rangle_{L^2_{\mu_0}} \,d\Lambda_L^\epsilon(t,u,\beta)\\
    &= \lim_{\epsilon \searrow 0}  \int_0^T \left\langle \widetilde\Xi_L^\epsilon(t),\Theta(t) \right\rangle_{L^2_{\mu_0}}\,dt.
\end{align*}

Finally, the strong convergence $\Psi^\epsilon_v \to \Psi^*_v$ in $C([0,T]; L^2_{\mu_0})$ yields that
$$\lim_{\epsilon \searrow 0} \int_0^T \inner{\widetilde\Xi^\epsilon_L(t), \Psi^\epsilon_v(t)}_{L^2_{\mu_0}} \, dt = \int_0^T \inner{\widetilde \Xi^*_L(t), \Psi^*_v(t) }_{L^2_{\mu_0}} dt.$$

{\bf Step 5} (Maximization of the Hamiltonian).
\hfill

Collecting the previous convergences and using that
\(\sqrt{C \epsilon}T\to0\), we can pass to the limit in \eqref{eq: taylor expansion epsilon cost} and deduce that for \(v\in\mathcal U_{\rm ad}\),
\begin{equation}\label{eq:limit-variational-ineq}
    \begin{aligned}
    0\le\;&
    \alpha^*
    \int_{\R^d}
    \Big\langle
    \widetilde \Xi^*_\mcJ(x),
    \Psi_v^*(T,x)
    \Big\rangle\,d\mu_0(x)\\
    &+
    \alpha^*
    \int_0^T
    \Big(
    L(t,v_t,\mu_t^*)
    -
    L(t,u_t^*,\mu_t^*)
    \Big)\,dt\\
    &+
    \alpha^*
    \int_0^T\int_{\R^d}
    \Big\langle
    \widetilde\Xi^*_L(t)(x),
    \Psi_v^*(t,x)
    \Big\rangle\,d\mu_0(x)\,dt\\
    &+
    \int_{[0,T]}
    \int_{\R^d}
    \Big\langle
    \widetilde \Xi^*_g(t)(x),
    \Psi_v^*(t,x)
    \Big\rangle\,d\mu_0(x)\,
    \lambda^*(dt).
    \end{aligned}
\end{equation}
Set
$$\zeta^*_\mcJ =\widetilde \Xi^*_\mcJ \in \partial_C \mcJ(\mu^*_T), \qquad \zeta^*_L(t) :=\widetilde \Xi^*_L(t) \circ \Phi^*_{(t,0)} \in \partial_C L(t, u^*_t, \cdot)(\mu^*_t) \quad \sL-\text{a.e. } t,$$
$$\zeta^*_g(t) :=\widetilde\Xi^*_g(t) \circ \Phi^*_{(t,0)} \in \partial_C g(\mu^*_t) \qquad
\lambda^*\text{-a.e. }t\in[0,T],$$
and
\[
m^*(s):=
\int_{[0,s]}
\zeta_g^*(t)\big(\Phi^*_{(0,t)}(\cdot)\big)\,\lambda^*(dt).
\]
Let \(P^*\in BV([0,T];L^2_{\mu_0})\) be the solution of the backward
adjoint equation given by Proposition~\ref{prop:well-posedness-adjoint}:
\[
\left\{
\begin{aligned}
-dP^*(t,x)
&=
\Big[
D_x f[\mu_t^*]
\bigl(t,u_t^*,\Phi^*_{(0,t)}(x)\bigr)^\top P^*(t,x)
\\
&\quad+
\int_{\R^d}
\nabla_\mu f[\mu_t^*]
\bigl(t,u_t^*,\Phi^*_{(0,t)}(y)\bigr)
\bigl(\Phi^*_{(0,t)}(x)\bigr)^\top
P^*(t,y)\,d\mu_0(y)
\\
&\quad-
\alpha^*
\zeta_L^*(t)\bigl(\Phi^*_{(0,t)}(x)\bigr)
\Big]\,dt
-
dm^*(t,x),
\\
P^*(T,x)
&=
-\alpha^*
\zeta_\mcJ^*
\bigl(\Phi^*_{(0,T)}(x)\bigr).
\end{aligned}
\right.
\]
Using the duality identity between this adjoint equation and the linearized
equation for \(\Psi_v^*\), as in the smooth case, we obtain
\[
\begin{aligned}
&\alpha^*
\langle \zeta^*_\mcJ \circ \Phi^*_{(0,T)},\Psi_v^*(T)\rangle_{L^2_{\mu_0}}
+
\alpha^*
\int_0^T
\langle \zeta^*_L(t) \circ \Phi^*_{(0,t)},\Psi_v^*(t)\rangle_{L^2_{\mu_0}}\,dt\\
&\qquad
+
\int_{[0,T]}
\langle \zeta^*_g(t) \circ \Phi^*_{(0,t)},\Psi_v^*(t)\rangle_{L^2_{\mu_0}}\,\lambda^*(dt)\\
&=
-\int_0^T
\langle P^*(t), f[\mu_t^*]\bigl(t,v_t,\Phi^*_{(0,t)}(x)\bigr)
 -
 f[\mu_t^*]\bigl(t,u_t^*,\Phi^*_{(0,t)}(x)\bigr)\rangle_{L^2_{\mu_0}}\,dt .
\end{aligned}
\]
We now introduce the adjoint measure
\[
\gamma_t^*
:=
\big(\Phi^*_{(0,t)},P^*(t,\cdot)\big)_\#\mu_0
\in\mathscr P_2(\R^d\times\R^d).
\]
Then \(\pi^1_\#\gamma_t^*=\mu_t^*\), and~\eqref{eq:limit-variational-ineq} becomes
\begin{equation}\label{eq:integral-H-ineq}
  \int_0^T
\Big(
H(t,v_t,\gamma_t^*,\alpha^*)
-
H(t,u_t^*,\gamma_t^*,\alpha^*)
\Big)\,dt
\le0
\qquad
\forall v\in\mathcal U_{\rm ad},
\end{equation}
where 
\[
H(t,u,\gamma,\alpha)
:=
\int_{\R^{2d}}
\langle p,f[\pi^1_\#\gamma](t,u,x)\rangle\,d\gamma(x,p)
-
\alpha\,L(t,u,\pi^1_\#\gamma),
\]
Finally, by the continuity of \(u\mapsto H(t,u,\gamma_t^*,\alpha^*)\) and the classical localization argument,
the inequality extends to every \(u\in U\). Therefore
\[
H(t,u_t^*,\gamma_t^*,\alpha^*)
=
\max_{u\in U}
H(t,u,\gamma_t^*,\alpha^*)
\qquad
\text{for a.e. }t\in[0,T]. \]

{\bf Step 6} (Non-triviality of the multiplier and complementary slackness).
\hfill

To obtain the non-triviality condition, suppose that \(\alpha^*=0\) and \(\lambda^*=0\). Since
\(\lambda^\epsilon\in\mathcal M_+([0,T])\) and
\(\lambda^\epsilon\rightharpoonup^\star0\), testing against the constant
function \(1\) yields
\[
\|\lambda^\epsilon\|_{\mathrm{TV}}
=
\lambda^\epsilon([0,T])
\longrightarrow0.
\]
The dual norm associated with the equivalent norm chosen on \(C([0,T])\) is
equivalent to the total variation norm, and therefore
\[
|\lambda^\epsilon|_{\mathcal M([0,T])}\longrightarrow0.
\]
Together with~\eqref{eq:non-triviality-eps}, this implies \(\alpha^\epsilon\to1\), contradicting
\(\alpha^\epsilon\to\alpha^*=0\). Hence
\[
(\alpha^*,\lambda^*)\neq(0,0).
\]

The complementary slackness condition follows by using the convexity of the set $C$, together with the uniform convergence of $g^\epsilon \circ \mu^\epsilon$ to $g \circ \mu^*$ and the weak-$\star$ convergence of $\lambda^\epsilon \rightharpoonup\lambda^*$, c.f. \cite[p.22 and p.23]{urrea2026pontryagin}

\appendix
\titleformat{\section}
  {\normalfont \large\bfseries}
  {Appendix \Alph{section}:}
  {0.5 em}
  {}

\section{Further structural properties and proofs}\label{append: Additional properties Clarke subdiff}

This appendix collects further structural properties of the Clarke subdifferential and provides the proof of Theorem \ref{thm: clarke subdiff of wass dist}. Its purpose is threefold. First, we relate the Clarke subdifferential to the strong Fréchet subdifferential of \cite{ambrosioGradientFlowsMetric2005}. Second, we establish the basic calculus rules inherited from the classical Clarke theory. Finally, we develop the geometry of tangent plans needed for the proof of the explicit formula for the Clarke subdifferential of the squared Wasserstein distance.\\

As a byproduct, we also obtain a sharpened formulation of the strong Fréchet superdifferential of $d_\mcW^2(\cdot,\sigma)$; see Proposition \ref{prop: strong frechet superdifferential of wass dist}.

\subsection{Relations between Fréchet and Clarke subdifferential}

In this section we provide a relation between the Clarke subdifferential and the strong Fréchet subdifferential introduced in \cite{ambrosioGradientFlowsMetric2005}.

\begin{definition}
    Let $\mcG: \sP_2(\R^d) \to \R$ be a mapping and let $\mu \in \sP_2(\R^d)$. The sets of Fréchet strong subdifferentials of $\mcG$ at $\mu$ are defined as follows:
    \begin{enumerate}[i)]
        \item The set $\boldsymbol{\partial}_S \mcG(\mu)$ is the set of all the measures $\xi \in \sP_2(T\R^d)_\mu$ such that for any $\boldsymbol{\mu} \in \sP_2(T^2 \R^d)$ satisfying $\pi^{1,2}_\# \boldsymbol{\mu} = \xi$, $\pi^3_\# \boldsymbol{\mu} = \nu$,

        $$\mcG(\nu) \geq \mcG(\mu) + \int \inner{x_2, x_3 - x_1} d \boldsymbol{\mu}(x_1,x_2,x_3) + o( \norm{\pi^3 - \pi^1}_{\boldsymbol{\mu}})$$

        \item The set $\partial_S \mcG(\mu)$ is the set of all the maps $\Xi \in L^2_\mu$ such that for any $\gamma \in \Gamma(\mu, \nu)$ it holds that
        $$\mcG(\nu) \geq \mcG(\mu) + \int \inner{\Xi(x), y-x} d \gamma(x,y) + o( \norm{\pi^2 - \pi^1}_\gamma).$$
    \end{enumerate}
    The set of Fréchet strong superdifferentials $\boldsymbol{\partial}^S \mcG(\mu)$ and $\partial^S \mcG(\mu)$ are defined with $\leq$ instead.
\end{definition}

It can be proven (see e.g. \cite[Proposition 5.21]{bertucci2025tangent}) that
\begin{align}\label{eq: frechet subdiff and lifting}
    \boldsymbol{\partial}_S \mcG(\mu) = \left\{(X,Z)_\# \mbP : \, X \sim \mu, \, Z \in \partial_F \hat{\mcG}(X) \right\}
\end{align}

Similarly, a disintegration argument shows that
\begin{align}\label{eq: frechet subdiff by plans and barycenter}
    \partial_S \mcG(\mu) = \mcB[ \boldsymbol{\partial}_S \mcG(\mu)].
\end{align}

Both \eqref{eq: frechet subdiff and lifting} and \eqref{eq: frechet subdiff by plans and barycenter} also hold for the set of strong Fréchet superdifferentials.

\begin{proposition}\label{prop: frechet and clarke subdiff}
    Let $\mcG: \sP_2(\R^d) \to \R$ be locally Lipschitz around $\mu$. Then,
    $$\partial_S \mcG(\mu) \subset \partial_C \mcG(\mu).$$
\end{proposition}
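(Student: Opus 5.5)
We prove $\partial_S \mcG(\mu) \subset \partial_C \mcG(\mu)$ by showing that every $\Xi \in \partial_S \mcG(\mu)$ satisfies $\inner{\Xi, w}_\mu \le D\mcG(\mu)(w)$ for all $w \in C_1(\R^d;\R^d)$, which is exactly the defining condition of Definition~\ref{defi: clarke subdifferential}. The idea is to test the strong Fréchet inequality against the map-induced plans $(\id, \id - t w)_\#\mu$ and then compare with the $\limsup$ defining $D\mcG(\mu)$.

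\textbf{Step 1: apply the Fréchet inequality.} Fix $w \in C_1(\R^d;\R^d)$ and $t>0$, and set $\nu_t := (\id - t w)_\#\mu$. We have $\nu_t \in \sP_2(\R^d)$ because $w$ has linear growth and $\mu$ has finite second moment. Take the plan $\gamma_t := (\id, \id - t w)_\#\mu \in \Gamma(\mu,\nu_t)$, for which $\norm{\pi^2-\pi^1}_{\gamma_t} = t\norm{w}_\mu$. The definition of $\partial_S\mcG(\mu)$ then gives
\[
\mcG(\nu_t) \ge \mcG(\mu) - t\inner{\Xi, w}_\mu + o(t),
\qquad\text{that is,}\qquad
t\inner{\Xi,w}_\mu \le \mcG(\mu) - \mcG(\nu_t) + o(t).
\]

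\textbf{Step 2: recognise a Clarke quotient at a nearby base point.} The difficulty is that $\mu$ is not obtained from $\nu_t$ by a push-forward of the form $(\id + t w)_\#\nu_t$, so the right-hand side above is not yet a difference quotient of the type appearing in $D\mcG(\mu)(w)$. We therefore compare $(\id + t w)_\#\nu_t = (\id - t w + t w\circ(\id - t w))_\#\mu$ with $\mu$. Using the coupling induced by the map $x \mapsto x - t w(x) + t w(x - t w(x))$, we get
\[
d_\mcW\big((\id + t w)_\#\nu_t, \mu\big) \le t\,\norm{ w\circ(\id - t w) - w }_\mu .
\]
The right-hand side is $o(t)$: the integrand tends to $0$ pointwise because $w$ is continuous, and it is dominated by $C(1+|x|)^2$ thanks to the linear growth of $w$, so dominated convergence applies. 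Since $\nu_t \to \mu$, for small $t$ both $\mu$ and $(\id+tw)_\#\nu_t$ lie in the neighbourhood of $\mu$ where $\mcG$ is $\Lip_\mu(\mcG)$-Lipschitz. Hence
\[
\mcG(\mu) - \mcG(\nu_t) = \mcG\big((\id+tw)_\#\nu_t\big) - \mcG(\nu_t) + o(t).
\]

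\textbf{Step 3: pass to the limit.} Combining Steps 1 and 2 and dividing by $t$,
\[
\inner{\Xi,w}_\mu \le \frac{\mcG((\id+tw)_\#\nu_t) - \mcG(\nu_t)}{t} + o(1).
\]
Taking $\limsup_{t\searrow0}$ and using $\nu_t\to\mu$, the right-hand side is bounded by $D\mcG(\mu)(w)$, which gives $\Xi \in \partial_C\mcG(\mu)$. The main obstacle is Step 2, namely controlling the $o(t)$ error for merely continuous $w$ with linear growth. If dominated convergence turned out to be delicate there, we would first prove the inequality for $w \in C_c^\infty$, where a Lipschitz bound gives the error estimate directly, and then extend it to all $w \in C_1(\R^d;\R^d)$ using the Lipschitz continuity of $D\mcG(\mu)$ in $L^2_\mu$ from Proposition~\ref{prop:basic-properties-clarke-subdiff}.
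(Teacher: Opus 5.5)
\textbf{There is a sign error in Step 1 that breaks the argument.} With $\gamma_t=(\id,\id-tw)_\#\mu$, the definition of $\partial_S\mcG(\mu)$ correctly gives $\mcG(\nu_t)\ge\mcG(\mu)-t\inner{\Xi,w}_\mu+o(t)$. Rearranging this yields
\[
t\inner{\Xi,w}_\mu\;\ge\;\mcG(\mu)-\mcG(\nu_t)+o(t),
\]
which is a \emph{lower} bound on $\inner{\Xi,w}_\mu$, not the upper bound you wrote after ``that is''. Carried through Steps 2 and 3, it only gives $\inner{\Xi,w}_\mu\ge t^{-1}\big(\mcG((\id+tw)_\#\nu_t)-\mcG(\nu_t)\big)+o(1)$. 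That says nothing about whether $\inner{\Xi,w}_\mu\le D\mcG(\mu)(w)$, so as written the proof does not establish the inclusion.

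The inequality you wrote after ``that is'' is exactly what the definition gives for a strong Fr\'echet \emph{super}differential. For $\Xi\in\partial^S\mcG(\mu)$, your Steps 1--3 are a correct proof of $\partial^S\mcG(\mu)\subset\partial_C\mcG(\mu)$, and there the base-point shift of Step 2 is genuinely needed. That superdifferential version is the one relevant in the proof of Theorem~\ref{thm: clarke subdiff of wass dist}.

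\textbf{The repair for the subdifferential is simpler than your detour.} Test with the direction $+w$, i.e.\ with $\gamma_t=(\id,\id+tw)_\#\mu$, for which $\norm{\pi^2-\pi^1}_{\gamma_t}=t\norm{w}_\mu$. The Fr\'echet inequality then reads
\[
\inner{\Xi,w}_\mu\le\frac{\mcG((\id+tw)_\#\mu)-\mcG(\mu)}{t}+o(1).
\]
Now take $\limsup_{t\searrow0}$ with the base point frozen at $\nu=\mu$, which is an admissible choice in the $\limsup$ defining $D\mcG(\mu)(w)$. This gives $\inner{\Xi,w}_\mu\le D\mcG(\mu)(w)$ for every $w\in C_1(\R^d;\R^d)$, which is the definition of $\partial_C\mcG(\mu)$. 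In other words, this is just ``upper Dini derivative $\le$ Clarke derivative''. It needs no Step 2, no dominated convergence, and no use of the Lipschitz constant. Equivalently, your correctly rearranged inequality is this one with $w$ replaced by $-w$, and $C_1(\R^d;\R^d)$ is a vector space.

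For comparison, the paper states this proposition without proof. The route suggested by its framework goes through the lift: write elements of $\partial_S\mcG(\mu)$ as $\Bar_X[Z]$ with $Z\in\partial_F\hat{\mcG}(X)\subset\partial_C\hat{\mcG}(X)$, then apply Proposition~\ref{prop: clarke subdifferential and lifting}. The corrected direct argument is more elementary, since it avoids the lifting identity for $\boldsymbol{\partial}_S\mcG(\mu)$ altogether.
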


\subsection{Calculus rules for the Clarke subdifferential}

Most of the computational rules of the classical Clarke subdifferential are inherited by the Clarke subdifferential in $\sP_2(\R^d)$.

\begin{theorem}\label{thm: computational properties Clarke subdiff}
    Let $\mcG$, $\mcG_1,...., \mcG_n$ be locally Lipschitz around $\mu \in \sP_2(\R^d)$.
    \begin{enumerate}[i)]
        \item For every $\beta \in L^2_\mu$, $D\mcG(\mu)(-\beta) = D (-\mcG)(\mu)(\beta)$.
        
        \item Define $\mcF:= \max_{i = 1,...,n} \mcG_i$ and let $I_\mu := \{ i : \mcF(\mu) = \mcG_i(\mu)\}$. Then,
        $$\partial_C \mcF(\mu) \subset \operatorname{conv}\{ \partial_C \mcG_i(\mu) : i \in I_\mu \}.$$

        \item It holds that
        $$\partial_C (\mcG_1 + .... + \mcG_n)(\mu) \subset \partial_C \mcG_1(\mu) + ... + \partial_C \mcG_n(\mu).$$
    \end{enumerate}
\end{theorem}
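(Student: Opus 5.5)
The strategy is to transfer each rule from the classical Clarke calculus on the Hilbert space $L^2_\mbP$ through the Lions lift, using Proposition \ref{prop: clarke subdifferential and lifting}. Fix $X \in L^2_\mbP$ with $X_\#\mbP = \mu$. Two facts drive everything. First, $\partial_C \mcG(\mu) = \Bar_X[\partial_C \hat{\mcG}(X)]$. Second, $D\hat{\mcG}(X)(\beta\circ X) = D\mcG(\mu)(\beta)$ for every $\beta\in L^2_\mu$. The lift is compatible with the operations involved: $\widehat{-\mcG} = -\hat{\mcG}$, $\widehat{\max_i \mcG_i} = \max_i \hat{\mcG}_i$, and $\widehat{\sum_i \mcG_i} = \sum_i \hat{\mcG}_i$. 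Each lifted function is locally Lipschitz around $X$, since the Lipschitz constants of $\mcG$ and $\hat{\mcG}$ coincide.

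\textbf{Item i).} Apply \eqref{eq: equality generalized derivative} to both $\mcG$ and $-\mcG$. This reduces the claim to the classical identity $D U(X)(-V) = D(-U)(X)(V)$ for $U = \hat{\mcG}$ and $V = \beta\circ X$. That identity follows from the substitution $Y' = Y - tV$ in the limsup: as $Y\to X$ and $t\searrow0$ we also have $Y'\to X$, and
\[
\frac{U(Y - tV) - U(Y)}{t} = \frac{-U(Y'+tV) + U(Y')}{t}.
\]
Alternatively, one can cite \cite[Proposition 10.5]{clarke2013functional}.

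\textbf{Items ii) and iii).} Invoke the classical max-rule and sum-rule on $L^2_\mbP$ \cite[Chapter 10]{clarke2013functional}:
\[
\partial_C \max_i \hat{\mcG}_i(X) \subset \operatorname{conv}\{\partial_C\hat{\mcG}_i(X): i\in I_X\},
\qquad
\partial_C\Big(\sum_i \hat{\mcG}_i\Big)(X)\subset \sum_i \partial_C\hat{\mcG}_i(X).
\]
Here $I_X = I_\mu$, because $\hat{\mcG}_i(X) = \mcG_i(\mu)$. Now apply the linear map $\Bar_X[\cdot]$. Linearity makes it commute with finite sums and convex combinations, so
\[
\Bar_X[\operatorname{conv} A] = \operatorname{conv}\Bar_X[A],
\qquad
\Bar_X[A_1+\dots+A_n] = \Bar_X[A_1]+\dots+\Bar_X[A_n].
\]
Using $\Bar_X[\partial_C\hat{\mcG}_i(X)] = \partial_C\mcG_i(\mu)$ on both sides then gives ii) and iii).

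\textbf{Main obstacle.} The only delicate point is whether the convex hull in ii) must be closed. Each $\partial_C\hat{\mcG}_i(X)$ is convex, weakly compact and bounded, and the index set is finite. Hence the convex hull of their union equals the image of the compact set $\Delta_n\times\prod_i\partial_C\hat{\mcG}_i(X)$ under a continuous map, so it is already weakly compact and no closure is needed. For the same reason the finite Minkowski sum in iii) is weakly closed. I expect no further difficulty beyond checking these compactness statements.
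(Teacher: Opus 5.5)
The paper states this theorem without proof; it only remarks that the classical Clarke rules are ``inherited'' on $\sP_2(\R^d)$. Your argument makes that inheritance rigorous, and it is correct.

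The pieces fit as you describe. You fix a single $X\sim\mu$ for all functionals, and you use $\widehat{-\mcG}=-\hat{\mcG}$, $\widehat{\max_i\mcG_i}=\max_i\hat{\mcG}_i$, $\widehat{\sum_i\mcG_i}=\sum_i\hat{\mcG}_i$ and $I_X=I_\mu$. With Proposition~\ref{prop: clarke subdifferential and lifting}, and the fact that the linear map $\Bar_X$ commutes exactly with finite Minkowski sums and convex hulls, the Hilbert-space rules transfer verbatim. Your compactness remark correctly shows that no closure is dropped in ii) or iii).

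One point to tighten is the local Lipschitz transfer. The paper's lifting statement about Lipschitz constants is global, but you need a local version. It follows at once from $d_\mcW(Y_\#\mbP,Z_\#\mbP)\le\norm{Y-Z}_{L^2_\mbP}$: a Lipschitz bound for $\mcG$ on the $d_\mcW$-ball of radius $r$ around $\mu$ gives the same bound for $\hat{\mcG}$ on the $L^2_\mbP$-ball of radius $r$ around $X$.

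For comparison, an intrinsic proof in the style of Proposition~\ref{prop:basic-properties-clarke-subdiff} is also available. Directly from the definition, $D(\mcG_1+\dots+\mcG_n)(\mu)(w)\le\sum_i D\mcG_i(\mu)(w)$. For the max, any index active at $(\id+tw)_\#\nu$ lies in $I_\mu$ when $\nu$ is close to $\mu$ and $t$ is small, which gives $D\mcF(\mu)(w)\le\max_{i\in I_\mu}D\mcG_i(\mu)(w)$. One then extends to $L^2_\mu$ by density and applies item vii) of that proposition. Separating against the weakly compact convex right-hand sides yields ii) and iii).

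For i), your lift is genuinely more convenient. The substitution $Y'=Y-tV$ has no exact counterpart on $\sP_2(\R^d)$, since $(\id+tw)\circ(\id-tw)\neq\id$. An intrinsic proof would need an extra $O(t^2)$ estimate for Lipschitz $w$, followed by a density argument.

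Your route is therefore shorter. Its cost is that it rests entirely on the lift characterization, including the approximation of $\nu_n\to\mu$ by random variables $Y_n\to X$ used in that proof. The intrinsic route never leaves $\sP_2(\R^d)$.
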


\subsection{Geometric ingredients for the squared Wasserstein distance}\label{sec: clarke subdiff of squared wass dist}

This subsection provides the geometric ingredients underlying the proof of Theorem \ref{thm: clarke subdiff of wass dist}. We first recall the geometry of tangent and solenoidal plans, then establish a closure property for convex hulls, and finally derive the corresponding description of the strong Fréchet superdifferential.

\subsubsection{Tangent and solenoidal plans}

Let $T^2\R^d := \R^d \times (\R^d)^2$ and let $\pi^x, \pi^v, \pi^w: T^2\R^d \to \R^d$ be the projections onto the first, second and third coordinate respectively. On $\sP_2(T \R^d)_\mu$ introduce the distance
$$d_\mu^2(\zeta, \xi) := \inf_{\eta \in \Gamma_\mu(\zeta,\xi)} \int_{T^2\R^d} |v-w|^2 d \eta(x,v,w),$$
where
$$\Gamma_\mu(\zeta,\xi) := \{ \eta \in \sP_2(T^2\R^d) : (\pi^x, \pi^v)_\# \eta = \zeta, \quad (\pi^x, \pi^w)_\# \eta = \xi \}.$$
It is known that the infimum above is always reached, and we denote by $\Gamma^o_\mu(\zeta,\xi)$ the set of optimal plans between $\zeta$ and $\xi$. It is known that $(\sP_2(T\R^d)_\mu, d_\mu)$ is a complete metric space. For every $\lambda \in \R$ and $\xi, \zeta \in \sP_2(T\R^d)_\mu$, define
$$\lambda \xi := (\pi^x, \lambda \pi^v)_\# \xi, \qquad \norm{\xi}_\mu^2 := \int_{T\R^d} |v|^2 d \xi(x,v), \qquad \inner{\zeta,\xi}_\mu := \frac{1}{2}\big(\norm{\zeta}_\mu^2 + \norm{\xi}_\mu^2 - d_\mu^2(\zeta, \xi) \big).$$
By definition,
$$ \inner{\xi,\zeta}_\mu = \inner{\zeta, \xi}_\mu = \sup_{ \eta \in \Gamma_\mu(\xi, \zeta)} \int_{T^2 \R^d} \inner{v,w} d \eta(x,v,w).$$

The zero element $\mathbf{0}_\mu \in \sP_2(T\R^d)_\mu$ is defined as the measure $(\id, 0)_\# \mu$. In particular, for every $\xi \in \sP_2(T\R^d)_\mu$, $0 \xi = \mathbf{0}_\mu$ and $\norm{\xi}_\mu = 0$ if and only if $\xi = \mathbf{0}_\mu$.\\

The set of tangent plans $\bfTan_\mu \sP_2(\R^d) \subset \sP_2(T\R^d)_\mu$ corresponds to the set
$$\bfTan_\mu \sP_2(\R^d) := \overline{\bigg\{ \zeta \in \sP_2(T\R^d)_\mu : \exists t > 0, \quad (\pi^x, \pi^x + t \pi^v)_\# \zeta \in \Gamma_o\left(\mu, (\pi^x + t \pi^v)_\# \zeta\right)  \bigg\} }^{d_\mu}.$$
In \cite[Proposition 4.29]{giglithesis} it is shown that
\begin{align}\label{eq: scalar multiplication and tangent plans}
    \forall \lambda \in \R, \forall \zeta \in \bfTan_\mu \sP_2(\R^d), \qquad \lambda \zeta \in \bfTan_\mu \sP_2(\R^d)
\end{align}
The set of solenoidal plans $\mathbf{Sol}_\mu \sP_2(\R^d)$, introduced in \cite{aussedat2025structure}, corresponds to the set
$$\mathbf{Sol}_\mu \sP_2(\R^d) := \left\{ \zeta \in \sP_2(T\R^d)_\mu :  \forall \xi \in \bfTan_\mu \sP_2(\R^d), \quad \inner{\zeta, \xi}_\mu = 0 \right\}.$$
In particular, \eqref{eq: scalar multiplication and tangent plans} yields that
\begin{align}\label{eq: scalar multiplication and solenoidal plans}
    \forall \lambda \in \R, \forall \zeta \in \mathbf{Sol}_\mu \sP_2(\R^d), \qquad \lambda \zeta \in \mathbf{Sol}_\mu \sP_2(\R^d).
\end{align}
By \cite[Lemma 2.3]{aussedat2025structure}, it also holds that
\begin{align}\label{eq: derivative equal 0 for solenoidal mappings}
    \forall \zeta \in \mathbf{Sol}_\mu \sP_2(\R^d), \forall \sigma \in \sP_2(\R^d), \qquad \lim_{t \searrow 0} \frac{d_\mcW^2( (\pi^x + t \pi^v)_\# \zeta), \sigma) - d_\mcW^2(\mu, \sigma)}{t} = 0.
\end{align}

It can be proven (c.f. \cite[Proposition 4.30]{giglithesis} and \cite[Lemma 4.3]{aussedat2025structure}) that for every element $\xi \in \sP_2(T\R^d)_\mu$ there is a unique element $\pi^\mu_T\xi \in \bfTan_\mu\sP_2(\R^d)$ (resp. $\pi^\mu_S \xi \in \mathbf{Sol}_\mu \sP_2(\R^d))$ minimizing the distance from $\xi$ to $\bfTan_\mu \sP_2(\R^d)$ (resp. to $\mathbf{Sol}_\mu \sP_2(\R^d)$). Moreover, \cite[Theorem 4.2]{aussedat2025structure} 
yields the following.

\begin{theorem}[Helmholtz-Hodge decomposition]\label{thm: helmholtz hodge decomposition}
    For every $\xi \in \sP_2(T\R^d)_\mu$ there is a mapping $T_\xi: T\R^d \to T \R^d$ with $\pi^x \circ T_\xi = \pi^x$ such that
    $$\Gamma^o_\mu(\xi, \pi^\mu_T\xi) = \{ (\pi^x, \pi^v, T_\xi)_\# \xi \}, \qquad \Gamma^o_\mu(\xi, \pi^\mu_S\xi) = \{ (\pi^x, \pi^v, \pi^v - T_\xi)_\# \xi \}.$$
    Moreover,
    $$\forall \zeta \in \mathbf{Sol}_\mu \sP_2(\R^d), \qquad \inner{\xi, \zeta}_\mu = \inner{\pi^\mu_S \xi, \zeta}_\mu.$$
\end{theorem}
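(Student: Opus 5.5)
The plan is to work at the level of random variables, using the lift $\xi=(X,V)_\#\mbP$ with $X\sim\mu$, and to mimic the Hilbert-space projection theorem onto a closed cone and its polar.

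\textbf{Step 1 (projection onto the tangent cone).} Existence and uniqueness of $\pi^\mu_T\xi$ are taken from \cite[Proposition 4.30]{giglithesis}. I then fix $\eta\in\Gamma^o_\mu(\xi,\pi^\mu_T\xi)$ and write $\eta=(X,V,W)_\#\mbP$.

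\textbf{Step 2 (first-order conditions).} Since $\bfTan_\mu\sP_2(\R^d)$ is stable under $\lambda\zeta$ for $\lambda\ge0$, by \eqref{eq: scalar multiplication and tangent plans}, I will perturb $W\mapsto(1+s)W$ and differentiate at $s=0$. This gives the Pythagorean relation
\[
\int\inner{v-w,w}\,d\eta=0,
\]
that is, $\norm{\xi}_\mu^2=\norm{\pi^\mu_T\xi}_\mu^2+d_\mu^2(\xi,\pi^\mu_T\xi)$. Perturbing instead towards an arbitrary $\zeta\in\bfTan_\mu$ along a three-fold gluing should give
\[
\int\inner{v-w,z}\,d\boldsymbol\eta\le0
\]
for every gluing $\boldsymbol\eta$ of $\eta$ with $\zeta$. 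This is an obtuse-angle inequality.

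\textbf{Step 3 (the coupling is a map).} This is the step I expect to be the main obstacle. I must show that $W$ can be chosen as a function of $(X,V)$, so that $W=T_\xi(X,V)$, and that the optimal coupling is unique. My first attempt will be to replace $W$ by the conditional expectation $\mathbb E[W\mid X,V]$. This strictly decreases the cost $\norm{V-W}^2$ unless $W$ is already $(X,V)$-measurable, by strict convexity of the square. For this to work, the resulting plan must still be admissible, i.e.\ still tangent. I would verify this using the fact that $\bfTan_\mu$ is closed under fibrewise barycentric averaging. This is plausible via the characterization of tangent plans as limits of plans optimal in small time, since averaging the velocity over a fibre preserves cyclical monotonicity of $\pi^x+t\pi^v$ in the limit. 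If it fails, I would instead argue directly: take two optimal couplings, glue them over $(x,v)$, and average. Uniqueness of the projection then forces the two to agree, which yields both the map structure and $\Gamma^o_\mu(\xi,\pi^\mu_T\xi)=\{(\pi^x,\pi^v,T_\xi)_\#\xi\}$.

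\textbf{Step 4 (the solenoidal part).} I define
\[
\zeta_S:=(\pi^x,\pi^v-T_\xi)_\#\xi .
\]
The obtuse-angle inequality of Step 2 holds for all $\lambda\zeta$, because the cone is symmetric under $\lambda\in\R$. Hence
\[
\sup_{\boldsymbol\eta}\int\inner{v-w,z}\,d\boldsymbol\eta=0,
\]
so $\inner{\zeta_S,\zeta}_\mu=0$ for every tangent plan $\zeta$, i.e.\ $\zeta_S\in\mathbf{Sol}_\mu\sP_2(\R^d)$. I then show that $\zeta_S$ is the nearest solenoidal plan to $\xi$. For any $\sigma\in\mathbf{Sol}_\mu$, expand $d_\mu^2(\xi,\sigma)$ through the decomposition $v=T_\xi+(v-T_\xi)$. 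The cross term with the tangent component vanishes by the orthogonality $\inner{\sigma,\pi^\mu_T\xi}_\mu=0$, which gives $d_\mu^2(\xi,\sigma)\ge d_\mu^2(\xi,\zeta_S)$, with equality only for the coupling $(\pi^x,\pi^v,\pi^v-T_\xi)_\#\xi$. Together with the uniqueness statement from \cite[Lemma 4.3]{aussedat2025structure}, this gives $\pi^\mu_S\xi=\zeta_S$ and the second identity for $\Gamma^o_\mu(\xi,\pi^\mu_S\xi)$.

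\textbf{Step 5 (the inner-product identity).} For $\zeta\in\mathbf{Sol}_\mu$, glue $\zeta$ with $\xi$ and split $v=T_\xi(x,v)+(v-T_\xi(x,v))$. The contribution of $T_\xi$ has supremum $0$ over gluings, because $\pi^\mu_T\xi\perp\zeta$. The remaining term is exactly $\inner{\pi^\mu_S\xi,\zeta}_\mu$. A matching lower bound comes from choosing the gluing optimal for $\inner{\pi^\mu_S\xi,\zeta}_\mu$ and lifting it through the map $T_\xi$. This yields $\inner{\xi,\zeta}_\mu=\inner{\pi^\mu_S\xi,\zeta}_\mu$.
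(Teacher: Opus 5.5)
The paper gives no argument for this statement. It imports it verbatim from \cite[Theorem 4.2]{aussedat2025structure}. Your proposal is therefore a genuinely different, self-contained route: a Hilbert-type projection argument that uses only three things, namely the definition of $\bfTan_\mu\sP_2(\R^d)$ through plans optimal in small time, the symmetry \eqref{eq: scalar multiplication and tangent plans}, and existence/uniqueness of $\pi^\mu_T\xi$. What it buys is transparency: you see exactly why the optimal coupling with the tangent projection is a map and why the residual is solenoidal. Your Step 4 computation, valid for every $\sigma\in\mathbf{Sol}_\mu\sP_2(\R^d)$ and $\kappa\in\Gamma_\mu(\xi,\sigma)$,
\[
\int|v-s|^2\,d\kappa=\norm{\pi^\mu_T\xi}_\mu^2+\int|v-T_\xi(x,v)-s|^2\,d\kappa,
\]
yields uniqueness of $\pi^\mu_S\xi$ and of the optimal coupling directly. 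So you do not need \cite[Lemma 4.3]{aussedat2025structure}. However, one load-bearing ingredient is never stated or justified.

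\textbf{The missing ingredient (Step 2).} You obtain $\int\inner{v-w,z}\,d\boldsymbol\eta\le0$ for every tangent $\zeta$ and every gluing $\boldsymbol\eta$ by comparing $\pi^\mu_T\xi$ with the competitor $(\pi^x,\pi^w+s\pi^z)_\#\boldsymbol\eta$. That competitor must lie in $\bfTan_\mu\sP_2(\R^d)$, and stability under $\lambda\zeta$ does not give this. You need $\bfTan_\mu\sP_2(\R^d)$ to be convex in the sense of the definition preceding Proposition~\ref{prop: closed convex hull}, i.e. stable under adding velocities along arbitrary couplings. Without it there is no obtuse-angle inequality, hence no proof that $\zeta_S\in\mathbf{Sol}_\mu\sP_2(\R^d)$, and Steps 4--5 collapse.

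\textbf{How to supply it.} The property holds and is elementary. For fixed base points $x_1,\dots,x_n$ (indices mod $n$), cyclical monotonicity of $\{(x_i,x_i+tv_i)\}$ reads
\[
-\frac12\sum_i|x_{i+1}-x_i|^2+t\sum_i\inner{x_i,v_{i+1}-v_i}\le0 .
\]
This is affine in $(v_1,\dots,v_n)$, and since it holds at $t=0$ it passes to all smaller times. Hence if $\zeta_1,\zeta_2$ are optimal in times $t_1,t_2$, then along any coupling the average $(\pi^x,\tfrac12(\pi^{v_1}+\pi^{v_2}))_\#\boldsymbol\eta$ is optimal in time $\min(t_1,t_2)$. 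With the cone property, the pre-closure is therefore stable under sums, and the gluing argument of Proposition~\ref{prop: closed convex hull} passes this to the $d_\mu$-closure.

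The same affine observation settles the closure under fibrewise averaging that you only called ``plausible'' in Step 3. The conditional law of $W$ given $(X,V)$ sits on the fibre over $X$. Convex combinations of velocities at a common base point preserve the inequality. You pass to the closure by gluing approximating plans and using the $L^2$-contractivity of $\mathbb E[\,\cdot\mid X,V]$. Granting this, Steps 3--5 go through as you outline.

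\textbf{Two minor repairs.} First, in Steps 4--5 the cross term $\int\inner{T_\xi(x,v),s}\,d\kappa$ must vanish for \emph{every} coupling, not merely have supremum $0$. This follows by applying orthogonality to both $\pi^\mu_T\xi$ and $-\pi^\mu_T\xi$. Second, in Step 3, once every optimal coupling is known to be induced by a map, uniqueness is cleanest via the mixture $\tfrac12(\eta_1+\eta_2)$. It is again optimal for the same target $\pi^\mu_T\xi$, hence map-induced, which forces $T_1=T_2$ $\xi$-a.e. Your fallback of averaging $W_1,W_2$ instead changes the target plan, so it needs the convexity above. Also, it is strict convexity of the cost, not uniqueness of the projection, that then yields the contradiction.
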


\begin{remark}\label{remark: orthogonality of solenoidal and tangent components}
    A strong consequence of the above theorem is that it allows one to characterize the elements on $\bfTan_\mu \sP_2(\R^d)$ by orthogonality. More specifically,
    \begin{align*}
        \xi \in \bfTan_\mu \sP_2(\R^d) \quad \iff \quad \pi^\mu_S \xi = \boldsymbol{0}_\mu \quad \iff \quad \forall \zeta \in \mathbf{Sol}_\mu \sP_2(\R^d), \quad \inner{\xi, \zeta}_\mu = 0.
    \end{align*}
\end{remark}

\subsubsection{Convex hulls in the space of tangent plans}

\begin{definition}[Convex sets and convex hull on $\sP_2(T\R^d)_\mu$]
    A set $D \subset\sP_2(T \R^d)_\mu$ is said to be convex if for every $\zeta, \xi \in D$, every $t \in [0,1]$ and any $\eta \in \Gamma_\mu(\zeta, \xi)$ it holds that $(\pi^x, (1-t) \pi^v + t \pi^w)_\# \eta \in D$. For a set $C \subset \sP_2(T\R^d)_\mu$ the convex hull $\operatorname{conv}(C)$ of $C$ is the smallest convex set containing it. 
\end{definition}

Adapting the classical argument on vector spaces (see \cite[Remark 5.1.10]{aussedatphdthesis}) it can be deduced that
\begin{align}\label{eq: formula convex hull}
    \operatorname{conv}(C) = \bigcup_{n \in \N} C_n, \quad C_0 := C, \, C_{n+1} := \{ (\pi^x, (1-t) \pi^v + t \pi^w)_\# \eta : \eta \in \Gamma_\mu(\zeta, \xi), \zeta,\xi \in C_n, t \in [0,1] \}.
\end{align}

In \cite{aussedatphdthesis} the closed convex hull of a set $C \subset \sP_2(T\R^d)_\mu$ is defined as the smallest closed (w.r.t. $d_\mu$) convex set containing $C$. Since the geometry of $\sP_2(T\R^d)_\mu$ is non-flat, it is not immediate that the metric closure of $\operatorname{conv}(C)$ remains convex. The following proposition establishes precisely this fact.

\begin{proposition}\label{prop: closed convex hull}
    Let $C \subset \sP_2(T\R^d)_\mu$. The closed convex hull of $C$ is equal to the closure w.r.t. $d_\mu$ of the convex hull of $C$.
\end{proposition}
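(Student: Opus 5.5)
Let $\overline{\operatorname{conv}}(C)$ denote the smallest $d_\mu$-closed convex set containing $C$, and $\overline{\operatorname{conv}(C)}$ the $d_\mu$-closure of the convex hull. One inclusion is immediate. $\overline{\operatorname{conv}}(C)$ is convex and closed and contains $C$, so it contains $\operatorname{conv}(C)$ by minimality of the convex hull. Being closed, it then contains $\overline{\operatorname{conv}(C)}$.

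For the reverse inclusion it suffices to show that $D:=\overline{\operatorname{conv}(C)}$ is convex; it is closed and contains $C$, so minimality then gives $\overline{\operatorname{conv}}(C)\subset D$. So fix $\zeta,\xi\in D$, $t\in[0,1]$ and $\eta\in\Gamma_\mu(\zeta,\xi)$. Set $\theta:=(\pi^x,(1-t)\pi^v+t\pi^w)_\#\eta$; we must show $\theta\in D$.

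Choose $\zeta_n,\xi_n\in\operatorname{conv}(C)$ with $d_\mu(\zeta_n,\zeta)\to0$ and $d_\mu(\xi_n,\xi)\to0$. The key step is to \emph{lift} the given plan $\eta$ to a plan $\eta_n\in\Gamma_\mu(\zeta_n,\xi_n)$ that stays close to $\eta$. We do this by gluing along the common base variable $x$. Take optimal plans $\alpha_n\in\Gamma^o_\mu(\zeta_n,\zeta)$ and $\beta_n\in\Gamma^o_\mu(\xi,\xi_n)$. Disintegrate everything with respect to $\mu$. For $\mu$-a.e.\ $x$ we have $\alpha_{n,x}\in\Gamma(\zeta_{n,x},\zeta_x)$, $\eta_x\in\Gamma(\zeta_x,\xi_x)$ and $\beta_{n,x}\in\Gamma(\xi_x,\xi_{n,x})$. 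Apply the gluing lemma fiberwise, measurably in $x$ (for instance via \cite[Lemma 5.3.2]{ambrosioGradientFlowsMetric2005} applied on $T\R^d$ with the $x$-coordinate kept fixed). This produces a measure $\boldsymbol\omega_n$ on $\R^d\times(\R^d)^4$ with coordinates $(x,v_n,v,w,w_n)$ and the prescribed two-dimensional marginals. Now define
\[
\eta_n:=(x,v_n,w_n)_\#\boldsymbol\omega_n\in\Gamma_\mu(\zeta_n,\xi_n),
\qquad
\theta_n:=(\pi^x,(1-t)\pi^v+t\pi^w)_\#\eta_n .
\]
Since $\operatorname{conv}(C)$ is convex, $\theta_n\in\operatorname{conv}(C)$.

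It remains to check $d_\mu(\theta_n,\theta)\to0$. The measure $(x,(1-t)v_n+tw_n,(1-t)v+tw)_\#\boldsymbol\omega_n$ belongs to $\Gamma_\mu(\theta_n,\theta)$. Using it as a competitor and applying Minkowski's inequality gives
\[
d_\mu(\theta_n,\theta)\le (1-t)\Big(\int|v_n-v|^2d\boldsymbol\omega_n\Big)^{1/2}+t\Big(\int|w-w_n|^2d\boldsymbol\omega_n\Big)^{1/2}.
\]
Because $\alpha_n$ and $\beta_n$ are optimal, this equals $(1-t)d_\mu(\zeta_n,\zeta)+t\,d_\mu(\xi,\xi_n)$, which tends to $0$. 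Hence $\theta\in D$, so $D$ is convex and the two sets coincide.

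I expect the main obstacle to be the fiberwise gluing. One must make sure that the glued measure has the correct $x$-marginal and that the constraint $x$ equal across all factors is respected. One also needs a measurable selection of the glued fibers, or else a direct construction: glue $\alpha_n$, $\eta$, $\beta_n$ as measures on $T\R^d$ along shared $(x,v)$ and $(x,w)$ marginals using the standard gluing lemma, which automatically keeps $x$ common because the shared marginals include $x$. I would use the latter, since it avoids measurable selection entirely.
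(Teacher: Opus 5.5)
Your proof is correct and follows the paper's argument essentially step by step. You glue optimal plans in $\Gamma^o_\mu(\zeta_n,\zeta)$ and $\Gamma^o_\mu(\xi,\xi_n)$ with $\eta$ along the shared $(x,v)$ and $(x,w)$ marginals using the standard gluing lemma, project to get $\eta_n\in\Gamma_\mu(\zeta_n,\xi_n)$, interpolate inside $\operatorname{conv}(C)$, and bound $d_\mu(\theta_n,\theta)$ with the glued measure as competitor. Your Minkowski estimate $d_\mu(\theta_n,\theta)\le(1-t)\,d_\mu(\zeta_n,\zeta)+t\,d_\mu(\xi,\xi_n)$ is actually the correct bound, whereas the paper's displayed inequality drops the cross term $2t(1-t)\langle v-u,w-z\rangle$ (harmlessly, since that term also tends to $0$).
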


\begin{proof}
    Let $D\subset \sP_2(T\R^d)_\mu$ denote the closed convex hull of $C$. Clearly $\overline{\operatorname{conv}(C)}^{d_\mu} \subset D$. To prove the other inclusion, it suffices to prove that $\overline{\operatorname{conv}(C)}^{d_\mu}$ is convex. Let $\zeta, \xi \in \overline{\operatorname{conv}(C)}^{d_\mu}$, $\eta \in \Gamma_\mu(\zeta, \xi)$ and $t \in [0,1]$. By definition, there are elements $\zeta_n, \xi_n \in \operatorname{conv}(C)$ with $\zeta_n \to \zeta$ and $\xi_n \to \xi$ in $d_\mu$. Let $\eta^1_n \in \Gamma^o_\mu(\zeta, \zeta_n)$ and $\eta^2_n \in \Gamma^o_\mu(\xi, \xi_n)$. Disintegrating $\eta$  and $\eta^1_n$ w.r.t. to $\zeta$, using the gluing lemma \cite[Lemma 5.3.2]{ambrosioGradientFlowsMetric2005}, disintegrating the resulting measure and $\eta^2_n$ w.r.t. $\xi$ and using the gluing lemma again, we can construct a measure $\bar{\eta}_n \in \sP_2(T^4\R^d)$ such that
    $$(\pi^x, \pi^v, \pi^w)_\# \bar{\eta}_n = \eta, \quad (\pi^x, \pi^v, \pi^u)_\# \bar{\eta}_n = \eta^1_n, \quad (\pi^x, \pi^w, \pi^z)_\# \bar{\eta}_n = \eta^2_n,$$
    where the elements in $T^4\R^d$ are denoted by $(x,v,w,u,z)$. Define $\eta_n  := (\pi^x, \pi^u, \pi^z)_\# \bar{\eta}_n\in \Gamma_\mu(\zeta_n, \xi_n)$. Since $\operatorname{conv}(C)$ is convex, we have that
    $$(\pi^x, (1-t) \pi^v + t \pi^w)_\# \eta_n \in \operatorname{conv}(C).$$
    Moreover,
    \begin{align*}
        &d_\mu^2 \big((\pi^x, (1-t) \pi^v + t \pi^w)_\# \eta, (\pi^x, (1-t) \pi^v + t \pi^w)_\# \eta_n,  \big)\\
        &\leq \int_{T^4\R^d} (1-t)^2|v-u|^2 + t^2 |w-z|^2 d \bar{\eta}_n(x,v,w,u,z)\\
        &\leq (1-t)^2 d_\mu^2(\zeta, \zeta_n) + t^2 d_\mu^2(\xi, \xi_n) \xrightarrow[n \to +\infty]{}0.
    \end{align*}
    Hence, $(\pi^x, (1-t) \pi^v + t \pi^w)_\# \eta \in \overline{\operatorname{conv}(C)}^{d_\mu}.$
\end{proof}

\subsubsection{Strong Fréchet superdifferential and proof of Theorem \ref{thm: clarke subdiff of wass dist}}

The preceding closure result of Proposition \ref{prop: closed convex hull} allows us to formulate the strong Fréchet superdifferential of the squared Wasserstein distance in the following
form. This formulation sharpens the characterization in \cite[Theorem 5.1.12]{aussedatphdthesis} and is the form needed in the proof of Theorem \ref{thm: clarke subdiff of wass dist}.

\begin{proposition}\label{prop: strong frechet superdifferential of wass dist}
    Let $\mu, \sigma \in \sP_2(\R^d)$. Then,
    \begin{align}\label{eq: strong frechet superdifferential of wass dist}
        \boldsymbol{\partial}^S d_\mcW^2(\cdot, \sigma)(\mu) = \overline{\operatorname{conv}\bigg( \left\{(\pi^x, -2(\pi^y - \pi^x))_\# \gamma : \gamma \in \Gamma_o(\mu, \sigma) \right\} \bigg)}^{d_\mu}.
    \end{align}
\end{proposition}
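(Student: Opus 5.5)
The plan is to prove the two inclusions in \eqref{eq: strong frechet superdifferential of wass dist} separately. For the inclusion ``$\supseteq$'' we work directly with a quantitative superdifferential inequality. For ``$\subseteq$'' we rely on the characterization of \cite[Theorem 5.1.12]{aussedatphdthesis}, which describes $\boldsymbol{\partial}^S d_\mcW^2(\cdot,\sigma)(\mu)$ as the \emph{closed convex hull}, meaning the smallest $d_\mu$-closed convex set, of $C:=\{(\pi^x,-2(\pi^y-\pi^x))_\#\gamma:\gamma\in\Gamma_o(\mu,\sigma)\}$. Proposition~\ref{prop: closed convex hull} then identifies that set with $\overline{\operatorname{conv}(C)}^{d_\mu}$, which is exactly the right-hand side of the statement. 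In this sense the proposition is the combination of Aussedat's result with the closure property just proved. To make the argument robust, I would also verify ``$\supseteq$'' by hand.

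\emph{Step 1 (generators).} Fix $\gamma\in\Gamma_o(\mu,\sigma)$ and set $\xi_\gamma=(\pi^x,-2(\pi^y-\pi^x))_\#\gamma$. Take any $\boldsymbol{\mu}$ with $\pi^{1,2}_\#\boldsymbol{\mu}=\xi_\gamma$ and $\pi^3_\#\boldsymbol{\mu}=\nu$. Gluing $\boldsymbol{\mu}$ with $\gamma$ along the first coordinate gives a plan between $\nu$ and $\sigma$. Expanding $|x_3-y|^2=|x_1-y|^2+2\langle x_1-y,x_3-x_1\rangle+|x_3-x_1|^2$ then yields
\[
d_\mcW^2(\nu,\sigma)\le d_\mcW^2(\mu,\sigma)+\int\langle x_2,x_3-x_1\rangle\,d\boldsymbol{\mu}+\|\pi^3-\pi^1\|^2_{\boldsymbol{\mu}}.
\]
Let $S$ denote the set of $\xi\in\sP_2(T\R^d)_\mu$ satisfying this inequality with error exactly $\|\pi^3-\pi^1\|_{\boldsymbol{\mu}}^2$. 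Step 1 shows $C\subset S$, and clearly $S\subset\boldsymbol{\partial}^S d_\mcW^2(\cdot,\sigma)(\mu)$.

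\emph{Step 2 ($S$ is convex and closed).} For convexity, take $\zeta,\xi\in S$, a plan $\eta\in\Gamma_\mu(\zeta,\xi)$, and a multiplan $\boldsymbol{\mu}$ over $(\pi^x,(1-t)\pi^v+t\pi^w)_\#\eta$. Gluing $\boldsymbol{\mu}$ with $\eta$ produces a measure on $(x_1,v,w,x_3)$. Applying the defining inequality to its $(x_1,v,x_3)$ and $(x_1,w,x_3)$ marginals and averaging with weights $(1-t,t)$ gives the inequality for the combination, since it is affine in the velocity variable. For closedness, let $\xi_n\to\xi$ in $d_\mu$. Glue an optimal plan of $\Gamma^o_\mu(\xi,\xi_n)$ with the given $\boldsymbol{\mu}$. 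Cauchy--Schwarz then shows the linear terms differ by at most $d_\mu(\xi,\xi_n)\|\pi^3-\pi^1\|_{\boldsymbol{\mu}}$, so letting $n\to\infty$ recovers the inequality for $\xi$. Combining with Proposition~\ref{prop: closed convex hull}, $\overline{\operatorname{conv}(C)}^{d_\mu}\subset S\subset\boldsymbol{\partial}^S d_\mcW^2(\cdot,\sigma)(\mu)$.

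\emph{Step 3 (``$\subseteq$'', main obstacle).} This is the hard direction. My first approach is to take it from \cite[Theorem 5.1.12]{aussedatphdthesis}, which places every strong Fréchet supergradient in the closed convex hull $D$ of $C$. If that theorem is stated only up to a tangent projection, i.e.\ it controls $\pi^\mu_T\xi$, I would remove the projection using Theorem~\ref{thm: helmholtz hodge decomposition}. The supergradient inequality tested along $(\pi^x+t\pi^v)_\#\zeta$ with $\zeta\in\mathbf{Sol}_\mu\sP_2(\R^d)$, together with \eqref{eq: derivative equal 0 for solenoidal mappings} and \eqref{eq: scalar multiplication and solenoidal plans} (testing with $\pm\zeta$), forces $\langle\xi,\zeta\rangle_\mu=0$. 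Hence $\pi^\mu_S\xi=\mathbf{0}_\mu$ and $\xi$ is tangent by Remark~\ref{remark: orthogonality of solenoidal and tangent components}. Once $\xi\in D$, Proposition~\ref{prop: closed convex hull} gives $D=\overline{\operatorname{conv}(C)}^{d_\mu}$. This upgrades Aussedat's formulation to the stated one and closes the proof.
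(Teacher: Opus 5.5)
Your proposal is correct and follows essentially the same route as the paper. The inclusion $\supseteq$ comes from the supergradient inequality with uniform quadratic error for the generators, propagated through the interpolations of \eqref{eq: formula convex hull} and $d_\mu$-limits; the inclusion $\subseteq$ comes from \cite[Theorem 5.1.12]{aussedatphdthesis} once every strong supergradient is shown to be tangent by testing against $\pm\zeta$ with $\zeta\in\mathbf{Sol}_\mu\sP_2(\R^d)$.

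The only precision needed is in Step 1. There the coupling with $\sigma$ should be obtained by setting $y:=x_1-x_2/2$, i.e.\ gluing along $(x_1,x_2)$ and not along $x_1$ alone. This ensures the linear term is exactly $\int\langle x_2,x_3-x_1\rangle\,d\boldsymbol{\mu}$ rather than its barycentric version.
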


\begin{proof}
    Let $\gamma \in \Gamma_o(\mu, \sigma)$. By \cite[Theorem 10.2.2]{ambrosioGradientFlowsMetric2005}, $\xi :=(\pi^x, - 2(\pi^y - \pi^x))_\# \gamma \in \boldsymbol{\partial}^S d_\mcW^2(\cdot, \sigma)(\mu)$ and for every $\boldsymbol{\mu} \in \sP_2(T\R^d \times \R^d)$ with $\pi^{1,2}_\# \boldsymbol{\mu} = \xi$ it holds that
    \begin{align*}
        d_\mcW^2(\pi^3_\# \boldsymbol{\mu}, \sigma) \leq d_\mcW^2(\mu, \sigma) + \int_{T\R^d \times \R^d} \inner{x_2, x_3 - x_1} d \boldsymbol{\mu}(x_1,x_2,x_3) + 2 \norm{\pi^3 - \pi^1}^2_{\boldsymbol{\mu}}.
    \end{align*}
    As a direct consequence of the above inequality and \eqref{eq: formula convex hull}, we see that
    $$\operatorname{conv}\bigg( \left\{(\pi^x, -2(\pi^y - \pi^x))_\# \gamma : \gamma \in \Gamma_o(\mu, \sigma) \right\} \bigg) \subset \boldsymbol{\partial}^S d_\mcW^2(\cdot, \sigma)(\mu).$$
    Moreover, the same approximation argument used in Proposition \ref{prop: closed convex hull} shows that we can pass to the limit and obtain the corresponding inclusion. To prove the other inclusion we recall \cite[Theorem 5.1.12]{aussedatphdthesis}. In fact, it is proven there that if a measure $\xi \in \sP_2(T\R^d)_\mu$ belongs to $\bfTan_\mu \sP_2(\R^d)$ and satisfies the condition
    \begin{equation}\label{eq: weaker version of frechet superdiff}
        \begin{aligned}
            \forall \nu \in &\sP_2(\R^d), \forall \boldsymbol{\mu} \in \sP_2(T\R^d \times \R^d), \quad (\pi^1, \pi^2)_\#\boldsymbol{\mu} = \xi, \quad (\pi^1, \pi^3)_\# \boldsymbol{\mu} \in \Gamma_o(\mu, \nu),\\
            &d_\mcW^2(\nu, \sigma) \leq d_\mcW^2(\mu, \sigma) + \int_{T\R^d \times \R^d} \inner{x_2, x_3 - x_1} d \boldsymbol{\mu}(x_1,x_2,x_3) + o(d_\mcW(\nu, \mu))
        \end{aligned}
    \end{equation}
    then $\xi$ belongs to the closed convex hull of the set $\left\{(\pi^x, -2(\pi^y - \pi^x))_\# \gamma : \gamma \in \Gamma_o(\mu, \sigma) \right\}$. By Proposition \ref{prop: closed convex hull} this set corresponds to the closure in the $d_\mu$-topology of the convex hull of the set $ \left\{(\pi^x, -2(\pi^y - \pi^x))_\# \gamma : \gamma \in \Gamma_o(\mu, \sigma) \right\}$. Moreover, we see that the condition \eqref{eq: weaker version of frechet superdiff} is clearly weaker than the condition $\xi \in \boldsymbol{\partial}^S d_\mcW^2(\cdot, \sigma)(\mu)$. Hence, the proof follows if we prove that $\boldsymbol{\partial}^S d_\mcW^2(\cdot, \sigma)(\mu) \subset \bfTan_\mu \sP_2(\R^d)$. To prove it, let $\xi \in \boldsymbol{\partial}^S d_\mcW^2(\cdot, \sigma)(\mu)$, $\zeta \in \mathbf{Sol}_\mu \sP_2(\R^d)$ and let $\eta \in \Gamma_\mu(\xi, \zeta)$. Applying the definition of strong Fréchet superdifferential on the measure $\boldsymbol{\mu} := (\pi^x, \pi^v, \pi^x + t \pi^w)_\# \eta$, dividing by $t$, letting $t \searrow 0$ and using \eqref{eq: derivative equal 0 for solenoidal mappings}, we deduce that
    \begin{align*}
        \forall \eta \in \Gamma_\mu(\xi, \zeta), \qquad  0 = \lim_{t \searrow 0} \frac{d_\mcW^2((\pi^x + t \pi^v)_\# \zeta, \sigma) - d_\mcW^2(\mu, \sigma)}{t} \leq \int \inner{v,w} d \eta(x,v,w).
    \end{align*}
    Taking $-\zeta$ instead of $\zeta$ and using \eqref{eq: scalar multiplication and solenoidal plans} we deduce that
    $$\forall \zeta \in \mathbf{Sol}_\mu \sP_2(\R^d), \qquad \inner{\xi, \zeta}_\mu = 0.$$
    Remark \ref{remark: orthogonality of solenoidal and tangent components} yields that $\xi \in \bfTan_\mu \sP_2(\R^d)$.
\end{proof}

\begin{proof}[Proof of Theorem \ref{thm: clarke subdiff of wass dist}]
    Define $\mcG(\mu) := d_\mcW^2(\mu, \sigma)$ and let $\hat{\mcG}$ be its lift. By \cite[Lemma 2.1]{bertucciApproximationSquaredWasserstein2024}, for every $X \in L^2_\mbP$ we have
    $$\hat{\mcG}(X) = |X|^2 + M_2^2(\sigma) - 2 \sup_{ Y \sim \sigma} \inner{X,Y}.$$
    We see that the mapping $X \mapsto \sup_{Y \sim \sigma} \inner{X,Y}$ is convex. Denoting by $\partial_{\operatorname{conv}}, \partial_F$ (resp. $\partial^{\operatorname{conc}}, \partial^F$) the usual convex and Fréchet subdifferential (resp. concave and Fréchet superdifferential) on $L^2_\mbP$ and using \cite[Theorem 10.8]{clarke2013functional}, we obtain
    \begin{align*}
        \partial_C \hat{\mcG}(X) = 2X + \partial^{\operatorname{conc}}\bigg( -2\sup_{Y \sim \sigma} \inner{\cdot, Y} \bigg)(X) = \partial^F \bigg( |\cdot|^2- 2 \sup_{Y \sim \sigma} \inner{\cdot, Y} \bigg)(X) = \partial^F \hat{\mcG}(X).
    \end{align*}
    Using Propositions \ref{prop: clarke subdifferential and lifting}, \ref{prop: strong frechet superdifferential of wass dist} and \eqref{eq: frechet subdiff and lifting} we deduce that
    \begin{align*}
        \partial_C \mcG(\mu) = \bigcup_{X \sim \mu} \Bar_X[\partial_C \hat{\mcG}(X)] = \mcB \bigg[  \left\{ (X, Z)_\# \mbP : X \sim \mu, Z \in \partial^F \hat{\mcG}(X) \right\}  \bigg] &= \mcB\bigg[ \boldsymbol{\partial}^S \mcG(\mu) \bigg]
    \end{align*}
    On the one hand, since the mapping $\mcB: \left( \sP_2(T \R^d)_\mu, d_\mu \right) \to \left(L^2_\mu, \norm{\cdot}_\mu \right)$ is continuous, using the explicit formula \eqref{eq: formula convex hull} together with the linearity of the barycenter projection w.r.t. the interpolations appearing on the mentioned formula and Proposition \ref{prop: strong frechet superdifferential of wass dist},
    \begin{align*}
        \mcB[ \boldsymbol{\partial}^S \mcG(\mu)]&\subset  \overline{\operatorname{conv}\bigg( \left\{2 \id- 2\mcB[\gamma] : \gamma \in \Gamma_o(\mu, \sigma) \right\} \bigg)}^{L^2_\mu}.
    \end{align*}
    On the other hand, Propositions \ref{prop: frechet and clarke subdiff}, \ref{prop: strong frechet superdifferential of wass dist} and \eqref{eq: frechet subdiff by plans and barycenter} yield
    $$\{ 2 \id- 2\mcB[\gamma] : \gamma \in \Gamma_o(\mu, \sigma) \} \subset  \mcB[\boldsymbol{\partial}^S \mcG(\mu)] = \partial_C \mcG(\mu).$$
    The conclusion follows by the fact that $\partial_C \mcG(\mu)$ is convex and closed.
\end{proof}

\section{Diffusive perturbation}\label{sec: diffusive perturbation}

\begin{proof}[Proof of Proposition \ref{prop:diffuse-linearization}]
    We follow the proof of~\cite[Proposition~5.1]{urrea2026pontryagin}. 
    
    %
    %
    Set
    $$
        \Delta f(t,x):=
        f[\bar\mu_t]
        \bigl(t,u_t,\Phi^{\bar u}_{(0,t)}(x)\bigr)
        -
        f[\bar\mu_t]\bigl(t,\bar u_t,\Phi^{\bar u}_{(0,t)}(x)\bigr),
    $$
    and
    $$
        \Delta L^\epsilon(t)
        :=
        L^\epsilon(t,u_t,\bar\mu_t)
        -
        L^\epsilon(t,\bar u_t,\bar\mu_t).
    $$
    
    [A\ref{hyp:dynamic-non-smooth}.i], together with the measurability of the controls and the flow, implies that $(t, x) \mapsto$ $\Delta f(t, x)$ is jointly measurable. Consequently, $t \mapsto \Delta f(t) \in L^2_{\mu_0}$ is weakly $\sL$-measurable. Since $L^2_{\mu_0}$ is separable, Pettis' measurability theorem (see~\cite[Theorem 2]{diestel1977vector}) yields the $\sL-$measurability of $t \mapsto \Delta f(t)$.
    Moreover, [A\ref{hyp:dynamic-non-smooth}.ii] yields
    \[
    \|\Delta f(t)\|_{L^2_{\mu_0}}
    \leq
    \ell_f(t)\,d_U(u(t),\bar u(t))
    \quad\text{for a.e. }t\in[0,T].
    \]
    Since both factors on the right-hand side belong to $L^2(0, T)$, the norm of $\Delta f$ is integrable. Together with the strong measurability established above, this gives
    \(
    \Delta f \in L^1(0, T ; L^2_{\mu_0}) 
    \).
    For every \(\rho>0\), choose a simple function
    \(\Delta f_\rho\in L^1(0,T; L^2_{\mu_0})\) such that
    \begin{equation}\label{eq:approx-delta-f}
    \|\Delta f-\Delta f_\rho\|_{L^1(0,T; L^2_{\mu_0} )}
    \leq\rho^2.
    \end{equation}
    Since \(\Delta f_\rho\) takes finitely many values, its range is contained in
    a finite-dimensional subspace \(H_\rho\subset L^2_{\mu_0}\).
    
    Choose a partition
    \[
    0=t_0<t_1<\cdots<t_{m_\rho}=T
    \]
    such that
    \begin{equation}\label{eq:partition-delta-f-metric}
\int_{t_{j-1}}^{t_j}
\|\Delta f_\rho(t)\|_{L^2_{\mu_0}}\,dt
\le\rho^2,
    \qquad j=1,\ldots,m_\rho.
    \end{equation}
    On each \(I_j=[t_{j-1},t_j]\), apply Lyapunov's convexity theorem to
    \[
    t\longmapsto
    \bigl(1,\Delta L^\epsilon(t),\Delta f_\rho(t)\bigr)
    \in\mathbb R^2\times H_\rho.
    \]
    This yields a measurable set \(E_{\rho,j}\subset I_j\) such that
    \[
    \begin{aligned}
    \mathcal L(E_{\rho,j})
    &=\rho\mathcal L(I_j),\\
    \int_{E_{\rho,j}}\Delta L^\epsilon(t)\,dt
    &=\rho\int_{I_j}\Delta L^\epsilon(t)\,dt,\\
    \int_{E_{\rho,j}}\Delta f_\rho(t)\,dt
    &=\rho\int_{I_j}\Delta f_\rho(t)\,dt.
    \end{aligned}
    \]
    Setting \(E_\rho:=\bigcup_jE_{\rho,j}\), we obtain
    \[
    \mathcal L(E_\rho)=\rho T
    \]
    and~\eqref{eq:Lyap-L}. Moreover, combining the preceding identities with \eqref{eq:approx-delta-f} and \eqref{eq:partition-delta-f-metric}, we obtain
    \begin{equation}\label{eq:diffuse-metric-control}
    \sup_{t\in[0,T]}
    \left\|
    \frac1\rho
    \int_{E_\rho\cap[0,t]}\Delta f(s)\,ds
    -
    \int_0^t\Delta f(s)\,ds
    \right\|_{L^2_{\mu_0}}
    \longrightarrow0.
    \end{equation}
    We now turn to the linearization of the flow. Using the flow representantions 
    \(
    \mu_t^\rho=(\Phi^\rho_{(0,t)})_\#\mu_0\) and 
    \(
    \bar\mu_t=(\Phi^{\bar u}_{(0,t)})_\#\mu_0,
    \)
    we obtain that
    \[
    W_2(\mu_t^\rho,\bar\mu_t)
    \leq
    \left\|
    \Phi^\rho_{(0,t)}-\Phi^{\bar u}_{(0,t)}
    \right\|_{L^2_{\mu_0}}.
    \]
    Then, by the regularity assumptions on \(f\), together with
    \eqref{eq:diffuse-metric-control}, we may subtract the equations for
    \(\Phi^\rho\) and \(\Phi^{\bar u}\), divide by \(\rho\), and apply Grönwall's
    lemma to obtain
    \[
    \sup_{t\in[0,T]}
    \left\|
    \frac{\Phi^\rho_{(0,t)}-\Phi^{\bar u}_{(0,t)}}{\rho}
    -\Psi(t)
    \right\|_{L^2_{\mu_0}}
    \longrightarrow0.
    \]
    Hence,
    \[
    \Phi^\rho_{(0,\cdot)}
    =
    \Phi^{\bar u}_{(0,\cdot)}
    +\rho\Psi+o(\rho)
    \qquad\text{in }C([0,T]; L^2_{\mu_0}).
    \]
\end{proof}

\begin{proof}[Proof of Proposition \ref{prop: taylor expansion costs}]
    The differentiability of \(\mcJ\) yields
    \[
    \mcJ(\mu_T^\rho)-\mcJ(\bar\mu_T)
    =
    \rho
    \int_{\mathbb R^d}
    \left\langle
    \nabla_\mu\mcJ(\bar\mu_T)
    \bigl(\Phi^{\bar u}_{(0,T)}(x)\bigr),
    \Psi(T,x)
    \right\rangle
    \,d\mu_0(x)
    +o(\rho).
    \]
    Similarly, using~\eqref{eq:Lyap-L} and differentiability of \(L^\epsilon\) in the
    measure variable,
    \[
    \begin{aligned}
    &\int_0^T L^\epsilon(t,u^\rho_t,\mu_t^\rho)\,dt
    -
    \int_0^T L^\epsilon(t,\bar u_t,\bar\mu_t)\,dt
    \\
    &\qquad=
    \rho\int_0^T\Delta L^\epsilon(t)\,dt
    \\
    &\qquad\quad+
    \rho\int_0^T\int_{\mathbb R^d}
    \left\langle
    \nabla_\mu L^\epsilon(t,\bar u(t),\bar\mu_t)
    \bigl(\Phi^{\bar u}_{(0,t)}(x)\bigr),
    \Psi(t,x)
    \right\rangle
    \,d\mu_0(x)\,dt
    +o(\rho).
    \end{aligned}
    \]
    The result holds after combining the two expansions.
\end{proof}

\section{Well-posedness of the adjoint equation}\label{sec: well-posedness adjoint}

\begin{proposition}[Well-posedness of the adjoint equation]\label{prop:well-posedness-adjoint}
    Assume [A\ref{hyp:dynamic-non-smooth}]-[A\ref{hyp: assumptions lagrangian}]. Let
    \(\alpha^*\in[0,1]\), \(\lambda^*\in\mathcal M_+([0,T])\), and let
    \[
    \zeta_\mcJ^*\in\partial_C\mcJ(\mu_T^*),
    \qquad
    \zeta_L^*(t)\in
    \partial_C L(t,u_t^*,\cdot)(\mu_t^*)
    \quad\text{for a.e. }t\in[0,T],
    \]
    and
    \[
    \zeta_g^*(t)\in\partial_C g(\mu_t^*)
    \quad\text{for }\lambda^*\text{-a.e. }t\in[0,T]
    \]
    be measurable selections. Define
    \[
    m^*(t)
    :=
    \int_{[0,t]}
    \zeta_g^*(s)\circ\Phi^*_{(0,s)}\,\lambda^*(ds).
    \]
Then the backward Stieltjes equation
\[
\left\{
\begin{aligned}
-dP^*(t,x)
&=
\Bigg[
D_x f[\mu_t^*]
\bigl(t,u_t^*,\Phi^*_{(0,t)}(x)\bigr)^\top
P^*(t,x)
\\
&\quad+
\int_{\R^d}
\nabla_\mu f[\mu_t^*]
\bigl(t,u_t^*,\Phi^*_{(0,t)}(y)\bigr)
\bigl(\Phi^*_{(0,t)}(x)\bigr)^\top
P^*(t,y)\,d\mu_0(y)
\\
&\quad-
\alpha^*
\zeta_L^*(t)
\bigl(\Phi^*_{(0,t)}(x)\bigr)
\Bigg]\,dt
-
dm^*(t,x),
\\
P^*(T,x)
&=
-\alpha^*
\zeta_\mcJ^*
\bigl(\Phi^*_{(0,T)}(x)\bigr)
\end{aligned}
\right.
\]
admits a unique solution
\[
P^*\in BV\bigl([0,T];L^2_{\mu_0}\bigr).
\]
Moreover,
\[
P^*-m^*
\in
\AC\bigl([0,T];L^2_{\mu_0}\bigr).
\]
\end{proposition}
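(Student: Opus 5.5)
The plan is to remove the measure part by the substitution $Q:=P^*-m^*$ and then solve a linear backward Volterra equation in $L^2_{\mu_0}$ with a Banach fixed-point argument. First I check that the data make sense in $L^2_{\mu_0}$. Since $\Phi^*_{(0,t)}$ pushes $\mu_0$ to $\mu^*_t$, composition with it is an isometry $L^2_{\mu^*_t}\to L^2_{\mu_0}$. By Proposition \ref{prop:basic-properties-clarke-subdiff}.vi), and because $\mcJ$, $g$, $L(t,u,\cdot)$ were reduced to globally Lipschitz functions (with $L$ uniformly $C$-Lipschitz), the terms $\zeta_\mcJ^*\circ\Phi^*_{(0,T)}$, $\zeta_g^*(t)\circ\Phi^*_{(0,t)}$ and $\zeta_L^*(t)\circ\Phi^*_{(0,t)}$ are bounded in $L^2_{\mu_0}$ by $\Lip(\mcJ)$, $\Lip(g)$ and $C$, respectively. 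The measurability assumed of the selections then gives $m^*$ as a Bochner integral against $\lambda^*$. Hence $m^*\in BV([0,T];L^2_{\mu_0})$, its variation is at most $\Lip(g)\lambda^*([0,T])$, it is right-continuous, and the source $t\mapsto\alpha^*\zeta_L^*(t)\circ\Phi^*_{(0,t)}$ lies in $L^\infty(0,T;L^2_{\mu_0})$.

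Next I define the linear operator $A(t):L^2_{\mu_0}\to L^2_{\mu_0}$ by
\[
(A(t)P)(x):=D_xf[\mu^*_t](t,u^*_t,\Phi^*_{(0,t)}(x))^\top P(x)+\int_{\R^d}\nabla_\mu f[\mu^*_t](t,u^*_t,\Phi^*_{(0,t)}(y))(\Phi^*_{(0,t)}(x))^\top P(y)\,d\mu_0(y).
\]
By [A\ref{hyp:dynamic-non-smooth}.ii], $|D_xf|\le\ell_f(t)$. The Lipschitz bound in the measure variable gives $\|\nabla_\mu f[\mu](t,u,x)\|_{L^2_\mu}\le\ell_f(t)$; I plan to obtain this through the lift, since a Lipschitz function on $L^2_\mbP$ has gradient norm at most its Lipschitz constant. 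Cauchy--Schwarz then yields $\|A(t)\|_{\mathcal L(L^2_{\mu_0})}\le2\ell_f(t)$ with $\ell_f\in L^2\subset L^1$. Strong measurability of $t\mapsto A(t)P$ follows as in the proof of Proposition \ref{prop:diffuse-linearization}, using [A\ref{hyp:dynamic-non-smooth}.v] and Pettis' theorem.

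Writing the equation in integral form, $P^*$ is a solution if and only if
\[
P^*(t)=-\alpha^*\zeta_\mcJ^*\circ\Phi^*_{(0,T)}+\int_t^T\big(A(s)P^*(s)-\alpha^*\zeta_L^*(s)\circ\Phi^*_{(0,s)}\big)\,ds+\big(m^*(T)-m^*(t)\big)
\]
for all $t$; I adopt this as the definition of solution. Substituting $Q=P^*-m^*$ gives
\[
Q(t)=b(t)+\int_t^TA(s)Q(s)\,ds,
\]
where
\[
b(t):=-\alpha^*\zeta_\mcJ^*\circ\Phi^*_{(0,T)}+m^*(T)+\int_t^T\big(A(s)m^*(s)-\alpha^*\zeta_L^*(s)\circ\Phi^*_{(0,s)}\big)\,ds .
\]
Since $m^*$ is bounded and measurable and $\|A(\cdot)\|\in L^1$, the function $b$ belongs to $AC([0,T];L^2_{\mu_0})$.

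On $C([0,T];L^2_{\mu_0})$ I use the weighted norm $\sup_te^{-\kappa\int_t^T\ell_f}\|Q(t)\|$. For $\kappa$ large (say $\kappa=4$), the map $Q\mapsto b+\int_\cdot^TAQ$ is a contraction in this norm, so there is a unique fixed point. Equivalently, one can run a backward Grönwall argument on the difference of two solutions to get uniqueness. The fixed point $Q$ is absolutely continuous, because both $b$ and the integral term are. Therefore $P^*=Q+m^*\in BV$ and $P^*-m^*\in AC$. Uniqueness of $P^*$ in $BV$ comes from applying Grönwall to the difference of two solutions, which satisfies the homogeneous integral equation. The only delicate point is the operator bound on the nonlocal $\nabla_\mu f$ term; the rest is routine.
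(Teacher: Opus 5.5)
Your argument is essentially the paper's proof: the substitution $Q=P^*-m^*$, the bound $\|\mathcal A_t\|\le 2\ell_f(t)$ on the linear operator, a contraction in an exponentially weighted sup-norm on $C([0,T];L^2_{\mu_0})$, and then $Q\in\AC$ and $P^*=Q+m^*\in BV$. You bound the selections through the global-Lipschitz reduction made at the start of Section~\ref{sec: PMP on Wasserstein} instead of the paper's finite covering of $\{\mu_t^*\}$, which is an equally valid variant.

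There is one sign slip to fix. Integrating $-dP^*=[\cdots]\,dt-dm^*$ over $[t,T]$ produces $-\bigl(m^*(T)-m^*(t)\bigr)$ in your integral form, not $+\bigl(m^*(T)-m^*(t)\bigr)$. Correspondingly, $b(t)$ must contain $-m^*(T)$, as in the paper. With your signs, $Q=P^*-m^*$ would leave a term $-2m^*(t)$, so the measure part would not cancel.
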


\begin{proof}
    Since \(t\mapsto\mu_t^*\) is continuous, the set
    \[
    \{\mu_t^*:t\in[0,T]\}
    \]
    is compact in \(\mathscr P_2(\R^d)\). Since \(\mcJ\) and \(g\) are
    locally Lipschitz and \(L(t,u,\cdot)\) is locally Lipschitz uniformly
    with respect to \((t,u)\), a finite covering argument and
    Proposition~\ref{prop:basic-properties-clarke-subdiff} yield constants
    \(C_\mcJ,C_L,C_g>0\) such that
    \[
    \|\zeta_\mcJ^*\|_{L^2_{\mu_T^*}}\leq C_\mcJ,
    \qquad
    \|\zeta_L^*(t)\|_{L^2_{\mu_t^*}}\leq C_L
    \quad\text{for a.e. }t\in[0,T],
    \]
    and
    \[
    \|\zeta_g^*(t)\|_{L^2_{\mu_t^*}}\leq C_g
    \quad\text{for }\lambda^*\text{-a.e. }t\in[0,T].
    \]
    Using
    \((\Phi^*_{(0,t)})_\#\mu_0=\mu_t^*\), these estimates give
    \[
    \zeta_L^*(\cdot)\circ\Phi^*_{(0,\cdot)}
    \in L^\infty\bigl(0,T;L^2_{\mu_0}\bigr)
    \]
    and
    \[
    \operatorname{Var}_{L^2_{\mu_0}}(m^*;[0,T])
    \leq C_g\lambda^*([0,T]).
    \]
    Hence
    \(
    m^*\in BV\bigl([0,T];L^2_{\mu_0}\bigr).
    \)
    We first observe that [A\ref{hyp:dynamic-non-smooth}.ii] implies that
    \begin{equation}\label{eq:derivative-bounds-adjoint}
    \|D_xf[\mu](t,u,x)\|
    \leq \ell_f(t)
    \end{equation}
    and by
    Definition~\ref{def: differentiability on Wasserstein}, 
    \begin{equation}\label{eq:measure-derivative-bound-adjoint}
        \left\|
    \nabla_\mu f[\mu](t,u,x)(\cdot)^\top p
    \right\|_{L^2_{\mu}}
    \leq
    \ell_f(t)|p| \quad \forall p \in \R^d.
    \end{equation}
    For a.e. \(t\in[0,T]\), define
    \(\mathcal A_t:L^2_{\mu_0}\to L^2_{\mu_0}\) by
    \[
    \begin{aligned}
    (\mathcal A_tQ)(x)
    &:=
    D_x f[\mu_t^*]
    \bigl(t,u_t^*,\Phi^*_{(0,t)}(x)\bigr)^\top Q(x)
    \\
    &\quad+
    \int_{\R^d}
    \nabla_\mu f[\mu_t^*]
    \bigl(t,u_t^*,\Phi^*_{(0,t)}(y)\bigr)
    \bigl(\Phi^*_{(0,t)}(x)\bigr)^\top
    Q(y)\,d\mu_0(y).
    \end{aligned}
    \]
    By \eqref{eq:derivative-bounds-adjoint},
    \eqref{eq:measure-derivative-bound-adjoint}, Minkowski's inequality,
    and
    \((\Phi^*_{(0,t)})_\#\mu_0=\mu_t^*\), we obtain
    \[
    \begin{aligned}
    \|\mathcal A_tQ\|_{L^2_{\mu_0}}
    &\leq
    \ell_f(t)\|Q\|_{L^2_{\mu_0}}
    \\
    &\quad+
    \int_{\R^d}
    \left\|
    \nabla_\mu f[\mu_t^*]
    \bigl(t,u_t^*,\Phi^*_{(0,t)}(y)\bigr)
    \bigl(\Phi^*_{(0,t)}(\cdot)\bigr)^\top
    Q(y)
    \right\|_{L^2_{\mu_0}}
    \,d\mu_0(y)
    \\
    &\leq
    \ell_f(t)\|Q\|_{\mu_0}
    +
    \ell_f(t)
    \int_{\R^d}|Q(y)|\,d\mu_0(y)
    \\
    &\leq
    2\ell_f(t)\|Q\|_{L^2_{\mu_0}}.
    \end{aligned}
    \]
    
Set
\(
Q:=P^*-m^*
\).
The Stieltjes equation is equivalent to
\[
\begin{aligned}
Q(t)
&=
-\alpha^*
\zeta_\mcJ^*\circ\Phi^*_{(0,T)}
-
m^*(T)
\\
&\quad+
\int_t^T
\mathcal A_s\bigl(Q(s)+m^*(s)\bigr)\,ds
-
\alpha^*
\int_t^T
\zeta_L^*(s)\circ\Phi^*_{(0,s)}\,ds.
\end{aligned}
\]
    Let \(\mathcal T\) denote the right-hand side. Since
    \(m^*\) is bounded and \(\ell_f\in L^1(0,T)\), the mapping
    \(\mathcal T\) sends
    \(C([0,T];L^2_{\mu_0})\) into itself.
    For \(\lambda>0\), equip \(C([0,T];L^2_{\mu_0})\) with the equivalent
    norm
    \[
    \|Q\|_\lambda
    :=
    \sup_{t\in[0,T]}
    \exp\left(
    -2\lambda\int_t^T\ell_f(s)\,ds
    \right)
    \|Q(t)\|_{\mu_0}.
    \]
    For \(Q_1,Q_2\in C([0,T];L^2_{\mu_0})\), the estimate on
    \(\mathcal A_t\) gives
    \[
    \|\mathcal T(Q_1)(t)-\mathcal T(Q_2)(t)\|_{\mu_0}
    \leq
    2\int_t^T
    \ell_f(s)\|Q_1(s)-Q_2(s)\|_{\mu_0}\,ds.
    \]
    Multiplying both sides by
    \(
    \exp\left(-2\lambda\int_t^T\ell_f(r)\,dr\right)
    \)
    and using the definition of \(\|\cdot\|_\lambda\), we obtain
    \[
    \begin{aligned}
    &e^{-2\lambda\int_t^T\ell_f(r)\,dr}
    \|\mathcal T(Q_1)(t)-\mathcal T(Q_2)(t)\|_{\mu_0}
    \\
    &\qquad\leq
    2\|Q_1-Q_2\|_\lambda
    \int_t^T
    \ell_f(s)
    e^{-2\lambda\int_t^s\ell_f(r)\,dr}\,ds
    \\
    &\qquad\leq
    \frac{1}{\lambda}\|Q_1-Q_2\|_\lambda.
    \end{aligned}
    \]
    Taking the supremum over \(t\in[0,T]\), we obtain
    \[
    \|\mathcal T(Q_1)-\mathcal T(Q_2)\|_\lambda
    \leq
    \frac{1}{\lambda}
    \|Q_1-Q_2\|_\lambda.
    \]
    Thus, for \(\lambda>1\), \(\mathcal T\) is a strict contraction and
    admits a unique fixed point
    \[
    Q\in C\bigl([0,T];L^2_{\mu_0}\bigr).
    \]
    
    Since \(T<\infty\) and \(\ell_f\in L^2(0,T)\), we have
    \(\ell_f\in L^1(0,T)\). The integral equation then yields
\[
\partial_tQ(t)
=
-\mathcal A_t\bigl(Q(t)+m^*(t)\bigr)
+
\alpha^*
\zeta_L^*(t)\circ\Phi^*_{(0,t)}
\]
with
\[
\partial_tQ
\in
L^1\bigl(0,T;L^2_{\mu_0}\bigr).
\]
Consequently,
\[
Q=P^*-m^*
\in
\AC\bigl([0,T];L^2_{\mu_0}\bigr).
\]
Since
\(
P^*=Q+m^*
\)
and
\(
m^*\in BV\bigl([0,T];L^2_{\mu_0}\bigr)
\),
it follows that
\[
P^*\in BV\bigl([0,T];L^2_{\mu_0}\bigr).
\]
Uniqueness of \(P^*\) follows from the uniqueness of \(Q\).
\end{proof}

\section*{Statements \& Declarations}
\paragraph{Conflict of interest} The authors have no conflict of interest.

\bibliographystyle{plainurl}
\bibliography{biblio}

\end{document}